\theoremstyle{plain}
\newtheorem{theorem}{Theorem}[section]
\newtheorem*{theorem*}{Theorem}
\newtheorem{lemma}[theorem]{Lemma}
\newtheorem{proposition}[theorem]{Proposition}
\theoremstyle{definition}
\newtheorem{definition}[theorem]{Definition}
\newtheorem{convention}[theorem]{Convention}
\theoremstyle{remark}
\newtheorem{remark}[theorem]{Remark}
\newtheorem{example}[theorem]{Example}
\newcommand{\dd}{\colon}
\definecolor{dark-green}{RGB}{14,150,2}
\newcommand{\gpoint}{\color{dark-green}{\circ}}
\newcommand{\rpoint}{\color{red} \bullet}
\DeclareMathOperator{\Hom}{Hom}
\DeclareMathOperator{\Kb}{\mathsf{K}^\mathsf{b}}
\DeclareMathOperator{\Db}{\mathsf{D}^\mathsf{b}}
\newcommand{\proj}{\textrm{-}\mathrm{proj}}
\newcommand{\modl}{\textrm{-}\mathrm{mod}}
\newcommand{\simSt}{\sim_\mathsf{St}}
\newcommand{\simBa}{\sim_\mathsf{Ba}}
\newcommand{\simInfSt}{\sim_{\infty\mathsf{St}}}
\newcommand{\Z}{\mathbb{Z}}
\newcommand{\mcB}{\mathcal{B}}
\newcommand{\mcC}{\mathcal{C}}
\newcommand{\mcD}{\mathcal{D}}
\newcommand{\mcG}{\mathcal{G}}
\newcommand{\mcL}{\mathcal{L}}
\newcommand{\mcO}{\mathcal{O}}
\newcommand{\mcP}{\mathcal{P}}
\newcommand{\mcQ}{\mathcal{Q}}
\newcommand{\mcR}{\mathcal{R}}
\newcommand{\mcS}{\mathcal{S}}
\newcommand{\mcT}{\mathcal{T}}
\newcommand{\nameto}[1]{\; \tikz{\node[] (text) {\scriptsize $#1$}; \draw[->] (text.south west) -- (text.south east);} \;}
\newcommand{\cone}{\mathrm{cone}}
\newcommand{\id}{\mathrm{id}}
\title[Semiorthogonal decompositions for gentle algebras]%
{Semiorthogonal decompositions for bounded derived categories of gentle algebras}
\author{Jakub Kopřiva}
\author{Jan Šťovíček}
\date{\today}
\begin{document}

\begin{abstract}
We study semiorthogonal decompositions of bounded derived categories of gentle algebras and how they are manifested in the geometric model of these categories as constructed by Opper, Plamondon and Schroll in \cite{opper2018geometric}.
We prove that there is a one-to-one correspondence between such semiorthogonal decompositions and suitable cuts of the marked surface underlying the geometric model.
Our main tool is the characterization of basis morphisms between indecomposable objects due to Arnesen, Laking and Pauksztello in~\cite{arnesen2016morphisms}.
\end{abstract}

\maketitle
\tableofcontents

\section*{Introduction}
Gentle algebras arose as generalizations of iterated tilted algebras of type $A$ in \cite{assem1981generalized} and \cite{assem1987iterated}. In representation theory, gentle algebras have been of interest mainly due to the fact that they form a rich, but sufficiently well-behaved class of algebras. However, gentle algebras appear naturally in many other areas of mathematics as well, a preeminent example being the field of homological mirror symmetry (for example, see \cite{haiden2014flat}). Gentle algebras have been extensively studied, and their representation theory is well-understood.

Recently, bounded derived categories of gentle algebras have been a focus of substantial research activity that followed up on \cite{bekkert2003indecomposables}, which gave a characterization of their indecomposable objects: morphisms between these indecomposable were fully described in \cite{arnesen2016morphisms}, their mapping cones were calculated in \cite{ccanakcci2019mapping} and \cite{ccanakcci2021corrigendum}, and a geometric model for these derived categories has been established in \cite{opper2018geometric} and used, among other things, to characterize silting objects thereof in \cite{amiot2019complete} and give complete derived invariants in \cite{opper2019auto} and \cite{amiot2019complete}.

The geometric model has already proven to be a useful tool for finer study of bounded derived categories of gentle algebras as well: for example, silting objects therein and related notions are studied in \cite{chang2020geometric} and \cite{chang2022recollements} using specific ways to cut the marked surface underlying the geometric model.
A very similar model has been also developed to study categories of modules over gentle algebras~\cite{baur2021geometric}.

\smallskip

In this paper, we study semiorthogonal decompositions of bounded derived categories of gentle algebras. Semiorthogonal decompositions are a useful tool for understanding triangulated categories because they allow us to view an entire triangulated category as constructed from simpler subcategories; they are studied in algebraic geometry for the derived category of coherent sheaves on algebraic varieties (for example, see \cite{kuznetsov2014semiorthogonal}), algebraic topology (where they are often referred to as Bousfield localizations, \cite[\S3]{hovey1997stablehomotopy}) as well as representation theory. 

We show that, for gentle algebras, semiorthogonal decompositions of their bounded derived categories can be understood as suitable cuts of the marked surfaces in their geometric models, which can be understood as a partial converse to the results in \cite{chang2020geometric} and \cite{chang2022recollements}.

Our main result is the following theorem:

\begin{theorem*}[Theorem \ref{TDecompositionsProperSequences}]
Let $\Lambda$ be a gentle algebra. There is a one-to-one correspondence between semiorthogonal decompositions of $\Db(\Lambda\modl)$ with arbitrarily many terms and proper sequences of good cuts of the marked surface associated to~$\Lambda$.
\end{theorem*}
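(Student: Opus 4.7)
The plan is to reduce the general statement to the case of a single good cut, corresponding to a two-term semiorthogonal decomposition, and then iterate. Given a semiorthogonal decomposition $\Db(\Lambda\modl) = \langle \mcT_1, \ldots, \mcT_n \rangle$, regrouping it as $\langle \mcT_1, \langle \mcT_2, \ldots, \mcT_n \rangle \rangle$ reduces the problem to a two-term decomposition plus a further decomposition of $\langle \mcT_2, \ldots, \mcT_n \rangle$. On the geometric side, a proper sequence of good cuts should by design restrict to a good cut on the already-cut pieces, so that induction on $n$ yields the full bijection once the $n=2$ case is settled and shown to be compatible with iteration.

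For the two-term case, in the direction from cuts to decompositions, a good cut of the surface partitions the indecomposable objects of $\Db(\Lambda\modl)$ into two families according to which side of the cut their underlying curves lie on. Let $\mcA$ and $\mcB$ be the triangulated subcategories generated by these families. The characterization of basis morphisms by Arnesen, Laking and Pauksztello in \cite{arnesen2016morphisms} reduces $\Hom$-computations between indecomposables to combinatorial data (common subpaths and graded intersections) on the underlying curves; the defining property of a good cut is designed exactly so that no such data produces a nonzero morphism in the wrong direction, giving $\Hom^*(\mcB, \mcA) = 0$. Existence of the approximation triangles $B \to X \to A$ can then be extracted either by a direct arc-by-arc construction using the mapping cone formulae of \cite{ccanakcci2019mapping, ccanakcci2021corrigendum}, or by exhibiting a generating set of $\Db(\Lambda\modl)$ inside $\langle \mcA, \mcB \rangle$.

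Conversely, given a two-term decomposition $\Db(\Lambda\modl) = \langle \mcA, \mcB \rangle$, both subcategories are thick, so are determined by their indecomposable objects, and this yields a partition of the set of indecomposables and hence of the associated curves on the surface. The semiorthogonality $\Hom^*(\mcB, \mcA) = 0$, re-read through the ALP bases, forbids every combinatorial overlap between a curve in $\mcB$ and a curve in $\mcA$ that would realize a basis morphism in the wrong direction. The geometric step is then to produce from this morphism-vanishing a genuine embedded system of arcs on the surface whose removal separates the two families, i.e., a good cut. I expect this to be the main obstacle, since one must upgrade a purely combinatorial decomposition of objects into a coherent topological separator; the argument will likely have to exploit both the thickness of $\mcA$ and $\mcB$ and the finer structure of the ALP bases (in particular that every long enough curve crossing the boundary region would generate a forbidden morphism). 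The final verification is that this construction and the one of the preceding paragraph are mutually inverse, and that the whole picture lifts to a bijection between $n$-term decompositions and proper sequences of good cuts by the induction outlined above.
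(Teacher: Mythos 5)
Your proposal stops precisely where the real work begins. The direction from a two-term semiorthogonal decomposition of $\Db(\Lambda\modl)$ to a good cut is the heart of the theorem, and you only flag it as ``the main obstacle''; moreover, the one mechanism you sketch rests on a false premise. A decomposition $\langle \mcA, \mcB \rangle$ does \emph{not} partition the indecomposables of $\Db(\Lambda\modl)$: a given indecomposable $X$ only admits a triangle $B \to X \to A$ with $B\in\mcB$, $A\in\mcA$, and in general lies in neither subcategory (for instance any string or band object whose curve would cross the eventual cut), so ``which side of the cut its curve lies on'' is undefined for most objects and no topological separator can be read off such a partition. The paper's actual route is to apply the decomposition to the finitely many indecomposable projectives and analyze their approximation triangles: using the ALP basis one proves (Lemmas \ref{LBoundaryGrapMaps} and \ref{LDualMorphisms}, Propositions \ref{PStringsProjectives}, \ref{PBandsProjectives}, \ref{PInfiniteStringsProjectives}, culminating in Theorem \ref{TProjectivesDecomposition}) that the summands occurring are string complexes, never bands or infinite strings, and that the only basis maps among them are boundary graph maps and singleton single maps of type (i); geometrically their arcs then meet only at endpoints with compatible counter-clockwise order, yielding a bipartite admissible dissection, and since many dissections induce the same decomposition the correct invariant is the set of dividing arcs, i.e.\ the good cut (Propositions \ref{PBipartiteDissectionsEquivalence} and \ref{PDissectionGoodCut}). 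One must also pass between $\Db(\Lambda\modl)$ and $\Kb(\Lambda\proj)$, which requires a separate argument about infinite string complexes wrapping around punctures (Theorem \ref{TSemiorthogonalDecompositionRestriction}). None of this is supplied or replaced by an alternative argument in your plan.

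The reduction to many terms also differs from the paper's in a way that creates a further gap. Your induction works inside the cut pieces, which implicitly requires identifying the inner subcategory $\langle \mcT_2, \dots, \mcT_n \rangle$ with the derived category of the gentle algebra modelled by the cut surface; that identification is genuinely extra input (in the spirit of the cutting/recollement results of Chang--Schroll) and is established neither by you nor by the paper. The paper avoids it entirely: an $n$-term decomposition is encoded as a nested chain of two-term decompositions $\langle \mcL_i, \mcR_i \rangle$ of the \emph{same} category with $\mcL_i \subsetneq \mcL_{i+1}$, and such chains are matched with proper sequences of good cuts of the \emph{same} surface (Definition \ref{DProperSequenceCuts}, Proposition \ref{PProperSequenceDecomposition}), so no geometric model of the intermediate subcategories is ever needed. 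As written, your text is an outline of what must be proved rather than a proof: both the central geometric step in the two-term case and the bridge from two terms to many remain open.
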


The strategy of proving this theorem is rather straight-forward. Firstly, we simplify the situation by studying only two-term semiorthogonal decompositions, and we formulate necessary conditions for approximations of the indecomposable projectives of the gentle algebras. In the geometric model, these necessary conditions are then showed to imply that the indecomposable of approximations of projectives yield a bipartite admissible dissection of the marked surface.

On the other hand, we show that each bipartite admissible dissection of the marked surface gives rise to a semiorthogonal decomposition of $\Kb(\Lambda \proj)$, but many such dissection give the same semiorthogonal decomposition. Therefore, we seek an invariant to distinguish between \textit{equivalent} bipartite admissible dissections, and we find it in a way how the surface can be cut using the dissection. Using this invariant, called \textit{good cut}, we establish that there is a one-to-one correspondence between semiorthogonal decompositions of $\Kb(\Lambda \proj)$ with arbitrarily many terms and proper sequences of good cuts of the marked surface.
Subsequently, we prove that semiorthogonal decompositions of $\Db(\Lambda \modl)$ are uniquely determined extensions of semiorthogonal decompositions of $\Kb(\Lambda \proj)$.

\section{Preliminaries and conventions}
In this section, we define semiorthogonal decompositions and gentle algebras, and, subsequently, we review some results on the bounded derived category of a gentle algebra and its geometric model that will prove useful in the following sections. We also set some conventions used throughout this text as adopted from~\cite{arnesen2016morphisms}.

\subsection{Semiorthogonal decompositions}
In this subsection, we give a definition of a semiorthogonal decomposition of a triangulated category, and we review some facts about them. 

\begin{definition}[Right and left admissible subcategory]\label{DAdmissibleSubcategory}
Let $\mcT$ be a triangulated category and $\mcC \subseteq \mcT$ be a full triangulated subcategory; we say that $\mcC$ is \emph{right admissible} if the inclusion functor of $\mcC$ into $\mcT$ admits a right adjoint. Similarly, $\mcC$ is \emph{left admissible} if the inclusion functor has a left adjoint.
\end{definition}

\begin{definition}[Semiorthogonal decomposition]\label{DSemiorthogonalDecomposition}
Let $\mcT$ be a triangulated category. Suppose that $\mcC_1, \dots, \mcC_m$ are full triangulated subcategories of $\mcT$ and that $\mcT_i$ is the smallest triangulated subcategory of $\mcT$ containing $\mcC_1, \dots, \mcC_i$. If $\mcT_m = \mcT$, $\mcC_i$ is right admissible subcategory of $\mcT_i$ for all $1 \leq i \leq m$, and $\Hom_\mcT(\mcC_j, \mcC_i) = 0$ for all $1 \leq i < j \leq n$, then we write $\langle \mcC_1, \dots, \mcC_m \rangle$, and we say that $\langle \mcC_1, \dots, \mcC_m \rangle$ is a \emph{semiorthogonal decomposition} of $\mcT$. 
\end{definition}


Before we recall some useful properties of semiorthogonal decompositions, we need to introduce some notation. Suppose that $\mcB$ is a subcategory of a triangulated category $\mcT$, we denote $\mcB^\bot = \{D \in \mcT \, | \, \forall B \in \mcB: \Hom_\mcT(B, D) = 0 \}$ and, similarly, ${^\bot}\mcB = \{D \in \mcT \, | \, \forall B \in \mcB: \Hom_\mcT(D, B) = 0 \}$.

\begin{proposition}[Lemma 3.1 in \cite{bergh2000abstract}]\label{PSemiorthogonalDecomposition}
Let $\mcT$ be a triangulated category, and let $\mcC_1$ and $\mcC_2$ be full triangulated subcategories of $\mcT$ such that $\Hom_\mcT(\mcC_2, \mcC_1) = 0$. Then the following are equivalent:
\begin{enumerate}[(i)]
    \item $\mcC_1$ and $\mcC_2$ generate $\mcT$ as a triangulated category.
    \item For every $D \in \mcT$ there exists a distinguished triangle $C_2 \to D \to C_1 \to C_2[1]$ with $C_2 \in \mcC_2$ and $C_1 \in \mcC_1$.
    \item The inclusion functor $i_1$ of $C_1$ into $\mcT$ has a left adjoint $j_1$, and $C_2 = {^\bot}\mcC_1$.
    \item The inclusion functor $i_2$ of $C_2$ into $\mcT$ has a right adjoint $j_2$, and $C_1 = \mcC_2^\bot$.
\end{enumerate}
Moreover, $i_2 j_2 D \to D \to i_1 j_1 D \to i_2 j_2 D[1]$ is the distinguished triangle in (ii); these distinguished triangles are also functorial.
\end{proposition}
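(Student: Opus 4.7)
The plan is to establish the equivalences through the circuit (ii)$\,\Rightarrow\,$(i), (i)$\,\Rightarrow\,$(ii), (ii)$\,\Rightarrow\,$(iii), (iii)$\,\Rightarrow\,$(ii), with (ii)$\,\Leftrightarrow\,$(iv) being entirely dual. The implication (ii)$\,\Rightarrow\,$(i) is immediate since the decomposing triangle writes every $D \in \mcT$ as built from objects of $\mcC_1 \cup \mcC_2$.

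For (i)$\,\Rightarrow\,$(ii), I would let $\mcD \subseteq \mcT$ be the full subcategory of objects admitting a decomposing triangle and show $\mcD$ is a triangulated subcategory containing $\mcC_1 \cup \mcC_2$, hence $\mcD = \mcT$ by (i). Containment of $\mcC_1, \mcC_2$ is trivial (take the other term to be zero) and shift-closure is clear; closure under cones is the technical heart. Given a triangle $D' \to D \to D'' \to D'[1]$ with decomposing triangles $C'_2 \to D' \to C'_1$ and $C''_2 \to D'' \to C''_1$, the orthogonality $\Hom(\mcC_2, \mcC_1[1]) = 0$ (valid because $\mcC_1$ is shift-stable) forces the composition $C''_2 \to D'' \to D'[1] \to C'_1[1]$ to vanish, so $C''_2 \to D'[1]$ lifts to some $\tilde\partial\dd C''_2 \to C'_2[1]$. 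Feeding $\tilde\partial$ into the octahedral axiom then produces the required $C_2 \in \mcC_2$, $C_1 \in \mcC_1$ and a decomposing triangle $C_2 \to D \to C_1$ for $D$. This octahedral bookkeeping is the main technical obstacle of the proof.

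For (ii)$\,\Rightarrow\,$(iii), define $j_1 D := C_1$ from the decomposing triangle, with unit $D \to i_1 j_1 D$ its middle morphism. Applying $\Hom_\mcT(-, X)$ for $X \in \mcC_1$, the outer terms $\Hom(C_2, X)$ and $\Hom(C_2[1], X)$ vanish by orthogonality combined with shift-stability of $\mcC_2$, yielding the adjunction isomorphism $\Hom(j_1 D, X) \cong \Hom(D, X)$; functoriality of $j_1$ follows by applying the same vanishing to the composition $C_2(D) \to D \to E \to C_1(E)$ for each $f\dd D \to E$, which factors uniquely through $D \to C_1(D)$ and thus induces $j_1(f)\dd C_1(D) \to C_1(E)$. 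The identification $\mcC_2 = {}^\bot \mcC_1$ then follows: $\mcC_2 \subseteq {}^\bot \mcC_1$ is the standing hypothesis, and conversely if $D \in {}^\bot \mcC_1$ then $\Hom(j_1 D, j_1 D) \cong \Hom(D, j_1 D) = 0$ forces $j_1 D = 0$, so the decomposing triangle collapses to $D \cong C_2 \in \mcC_2$. For (iii)$\,\Rightarrow\,$(ii), complete the unit $\eta_D\dd D \to i_1 j_1 D$ to a triangle $E \to D \to i_1 j_1 D \to E[1]$; since the maps $\eta_D[n]^*$ are adjunction isomorphisms in the resulting long exact sequence for any $X \in \mcC_1$ and every $n \in \Z$, exactness forces $\Hom(E, X) = 0$, hence $E \in {}^\bot \mcC_1 = \mcC_2$.

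The equivalence (ii)$\,\Leftrightarrow\,$(iv) is entirely dual, using the counit of $j_2 \dashv i_2$ and the hypothesis $\mcC_1 = \mcC_2^\bot$. The \emph{moreover} clause follows from uniqueness (up to unique isomorphism) of the decomposing triangle, itself a direct consequence of the orthogonality hypothesis: this uniqueness identifies the outer terms with $i_2 j_2 D$ and $i_1 j_1 D$ and the non-connecting morphisms with the counit and unit of the two adjunctions, so functoriality of these identifications yields functoriality of the triangle in $D$.
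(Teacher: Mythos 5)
Your argument is correct, but note that the paper does not actually prove this proposition at all: its ``proof'' is a pointer to Lemma 3.1 of Bondal (with the remark that the idea goes back to Beilinson--Bernstein--Deligne), so there is nothing internal to compare against. What you have written is precisely the standard argument from those sources: the d\'evissage subcategory $\mcD$ of objects admitting a decomposing triangle for (i)$\Rightarrow$(ii), the Hom-long-exact-sequence adjunction for (ii)$\Leftrightarrow$(iii), and uniqueness of the decomposing triangle (forced by $\Hom_\mcT(\mcC_2,\mcC_1)=0$ together with shift-stability) for functoriality. Two small points of hygiene rather than substance: in the cone-closure step of (i)$\Rightarrow$(ii), the cleanest packaging of your ``octahedral bookkeeping'' is the $3\times 3$ lemma applied to the commutative square $C''_2[-1]\to D''[-1]$, $C'_2\to D'$ furnished by your lift $\tilde\partial$, which simultaneously produces $C_2=\cone(\tilde\partial[-1])\in\mcC_2$, $C_1\in\mcC_1$ and the triangle $C_2\to D\to C_1\to C_2[1]$; and in the dual half the adjunction is $i_2\dashv j_2$ (the right adjoint $j_2$), so the map you use is the counit $i_2 j_2 D\to D$ of that adjunction, not of ``$j_2\dashv i_2$'' as written.
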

\begin{proof}
This is a variation of Lemma 3.1 in \cite{bondal1990representation}. Although, the ideas behind it can be traced back as far as \cite{beilinson1982faisceaux}.
\end{proof}

It can be observed that a semiorthogonal decomposition $\langle \mcC_1, \dots, \mcC_m \rangle$ can be viewed as a sequence of two term semiorthogonal decompositions $\langle \mcT_i, \mcC_{i+1} \rangle$ of $\mcT_{i+1}$, $1 \leq i \leq m-1$, where $\mcT_i$ is the smallest triangulated subcategory of $\mcT$ containing $\mcC_1, \dots, \mcC_i$. 

For more facts on triangulated categories, we refer the reader to \cite{holm2010triangulated}, for example. The reader may find more information on semiorthogonal decompositions in Chapter 10 therein.

\subsection{Gentle algebras}
In this subsection, we define gentle algebras, and, following \cite{arnesen2016morphisms}, we establish some conventions for working with them.

\begin{definition}\label{DGentleAlgebras}
Let $Q = (Q_0, Q_1)$ be a finite connected quiver. In accordance with \cite{arnesen2016morphisms}, we read paths in $Q$ from right to left. A finite-dimensional bound path algebra $\Lambda \cong kQ/I$ is called \emph{gentle} if:
\begin{enumerate}
    \item Each vertex $x \in Q_0$ is a source and a target of at most two arrows.
    \item For any arrow $a \in Q_1$ there is at most one arrow $b \in Q_1$ such that $ab \notin I$ and $c \in Q_1$ such that $ac \in I$.
    \item For any arrow $a \in Q_1$ there is at most one arrow $b \in Q_1$ such that $ba \notin I$ and $c \in Q_1$ such that $ca \in I$.
    \item The ideal $I$ is generated by paths of length two.
\end{enumerate}
\end{definition}

For the reader's convenience, we also follow the Convention 1.2 of \cite{arnesen2016morphisms}. If we denote the indecomposable projective left $\Lambda$ corresponding to a vertex $x \in Q_0$ as $P(x)$, then we can observe that there is bijection between paths from $x$ to $y$ that do not lie in $I$ and canonical basis elements of $\Hom_\Lambda(P(y), P(x))$ (given a path $p$ from $x$ to $y$ in $Q$ and not in $I$, we have a map from $P(y) \to P(x)$ that maps $u$, a path from $y$ in $Q$, to $up$). Therefore, by abuse of notation, we identify a path $p$ from $x$ to $y$ in $Q$ and not in $I$ with its corresponding basis element of $\Hom_\Lambda(P(y), P(x))$.

Following \cite{arnesen2016morphisms}, we also assume that the ground field $k$ is algebraically closed. Finally, we identify $\Db(\Lambda \modl)$ with the equivalent triangulated category $K^{-,b}(\Lambda \proj)$ of complexes of finitely generated projective $\Lambda$-modules bounded from the right with bounded cohomology. In the following text, $\Lambda = kQ/I$ will be a gentle algebra over an algebraically closed field $k$.

\begin{example}[A running example from~\cite{arnesen2016morphisms}]\label{ERunningExample}
For future reference, in order to illustrate our subsequent definitions and results, we introduce here a particular example of a gentle algebra.
This algebra is taken from one of our main sources, \cite[p.~6]{arnesen2016morphisms}, and
it is given by the following quiver with the quadratic relations indicated by the dotted arrows.
$$
\begin{tikzpicture}
\node (V0) at ( 0.0, 0.0) {$0$};
\node (V1) at (-1.3, 1.3) {$1$};
\node (V2) at (-1.3,-1.3) {$2$};
\node (V3) at ( 1.3, 1.3) {$3$};
\node (V4) at ( 1.3,-1.3) {$4$};

\draw[dotted] (V0)+( 0.35,-0.35) arc (-45 :  45 : 0.5cm);
\draw[dotted] (V0)+(-0.35, 0.35) arc (135 : 225 : 0.5cm);
\draw[dotted] (V1)+(    0,-0.5 ) arc (270 : 315 : 0.5cm);
\draw[dotted] (V2)+(    0, 0.5 ) arc ( 90 :  45 : 0.5cm);
\draw[dotted] (V3)+(    0,-0.5 ) arc (270 : 225 : 0.5cm);
\draw[dotted] (V4)+(    0, 0.5 ) arc ( 90 : 135 : 0.5cm);

\path[commutative diagrams/.cd, every arrow, every label]
  (V0) edge node[swap] {$a$} (V1)
  (V1) edge node[swap] {$b$} (V2)
  (V2) edge node[swap] {$c$} (V0)
  (V0) edge node[swap] {$d$} (V4)
  (V4) edge node[swap] {$e$} (V3)
  (V3) edge node[swap] {$f$} (V0);
\end{tikzpicture}
$$
\end{example}

\subsection{Objects in the derived category of a gentle algebra}
In this subsection, we recall the combinatorial description of indecomposable objects in the derived category of a gentle algebra using homotopy strings and bands from Section~4 in \cite{bekkert2003indecomposables}. We also draw on discussions of this combinatorial description in Sections~2 in \cite{arnesen2016morphisms} and \cite{opper2018geometric}.

This subsection is structured in a straightforward way: at first, we define the combinatorial objects (homotopy strings, homotopy bands, and infinite homotopy strings) used for the characterization of objects in $\Db(\Lambda\proj)$, then we show how to construct corresponding complexes (string complexes, band complexes, and infinite string complexes), and, finally, we formulate the characterization theorem of \cite{bekkert2003indecomposables}.

At first, we consider formal inverses $\overline{a}$ to arrows in $a \in Q_1$ such that $s(\overline{a}) = t(a)$ and $t(\overline{a}) = s(a)$. Also, we define that $\overline{\overline{a}} = a$ for all $a \in Q_1$.
For a path $p = a_1 \dots a_m$ in $Q$ with $a_1, \dots, a_m \in Q_1$, the \emph{inverse path} $\overline{p}$ is defined as $\overline{a_m} \dots \overline{a_1}$.
Arrows in paths are composed from right to left as usual, that is $t(a_i)=s(a_{i-1})$ for all $1 < i \le m$.

\begin{definition}[{Walks and generalized walks; after discussion at the beginning of \cite[Section~4]{bekkert2003indecomposables}}]\label{DGeneralizedWalks}
A \textit{walk} $w$ is a sequence $w_n \dots w_1$ where each $w_i$ is an arrow in $Q_1$ or an inverse arrow in $\overline{Q_1}$ and where $t(w_{i-1}) = s(w_{i})$ for $1 < i \leq n$. Trivial walks corresponding to vertices $v \in Q_0$ are also allowed.

A \textit{generalized walk} $w$ is a sequence $w_n \dots w_1$ where each $w_i$ is a non-trivial path $p$ in $Q$ or an inverse thereof, $\overline{p}$ and where consecutive paths connect at endpoints, $t(w_{i-1}) = s(w_{i})$ for $1 < i \leq n$. The inverse $\overline{w}$ of $w$ is defined naturally.
\end{definition}

\begin{remark}\label{RConventionsForUnfoldedDiagrams}
Given a walk or a generalized walk $w = w_n \dots w_1$, we usually draw it, as in \cite{arnesen2016morphisms}, in the form of a diagram
$$\begin{tikzcd}
\bullet \arrow[dash]{r}{w_n} & \bullet \arrow[dash, dotted]{r} & \bullet \arrow[dash]{r}{w_1} & \bullet
\end{tikzcd}$$
where the line segment labeled with $w_i$ stands for an arrow pointing to the right if $w_i$ is a path in $Q$, and to the left if $w_i$ is an inverse of a path in $Q$.
Note that the arrows in such a diagram have the \emph{opposite} orientation compared to the corresponding paths in the quiver $Q$.
For instance, referring to Example~\ref{ERunningExample},
the generalized walk $w=\overline{e}\overline{f}cbp$, where $p$ is the path $p=af$, is depicted by the diagram
$$\begin{tikzcd}
\bullet \arrow[r, leftarrow, "e"] &
\bullet \arrow[r, leftarrow, "f"] &
\bullet \arrow[r, "c"] &
\bullet \arrow[r, "b"] &
\bullet \arrow[r, "af"] &
\bullet
\end{tikzcd}
$$

The reason for this peculiarity is that, following the conventions of~\cite{arnesen2016morphisms}, our diagrams represent sequences of maps between indecomposable projective left $\Lambda$-modules corresponding to individual paths or their inverses in the walk, and such maps are given by right multiplication by the paths.
In our particular example, the diagram of the generalized walk thus stands for
$$\begin{tikzcd}
P(4) \arrow[r, leftarrow, "-\cdot e"] &
P(3) \arrow[r, leftarrow, "-\cdot f"] &
P(0) \arrow[r, "-\cdot c"] &
P(2) \arrow[r, "-\cdot b"] &
P(1) \arrow[r, "-\cdot af"] &
P(3).
\end{tikzcd}
$$
\end{remark}

\begin{definition}[{Homotopy letters and homotopy strings; \cite[Subsection~4.1]{bekkert2003indecomposables}, \cite[Subsection~2.1]{arnesen2016morphisms}, \cite[Definition~2.1]{opper2018geometric}}]\label{DHomotopyStrings}
A \emph{direct homotopy letter} is a non-trivial path $p$ in $Q$ which is not contained in $I$.
An \emph{inverse homotopy letter} is by definition the inverse $\overline{p}$ of a direct homotopy letter $p$.
A \emph{homotopy letter} is a common name for direct and inverse homotopy letters.

A (finite reduced) \emph{homotopy string} $w = w_n \dots w_1$ is a (possibly trivial) generalized walk such that:
\begin{enumerate}[(i)]
    \item it consists of homotopy letters $w_i$, for $1 \leq i \leq n$;
    \item if $w_i$ and $w_{i-1}$ are both direct or inverse, then $w_i w_{i-1} \in I$ or $\overline{w_i w_{i-1}} \in I$, respectively, for $1 < i \leq n$;
    \item if $w_i$ is direct and $w_{i-1}$ is inverse or $w_i$ is inverse and $w_{i-1}$ is direct, then $w_i$ and $\overline{w_{i-1}}$ do not start with the same arrow or $\overline{w_i}$ and $w_{i-1}$ do not end with the same arrow, respectively, for $1 < i \leq n$.
\end{enumerate}
\end{definition}

\begin{definition}[{Gradings and homotopy bands; \cite[Subsection~4.1]{bekkert2003indecomposables}, \cite[Subsection~2.2]{arnesen2016morphisms}}]\label{DGrandingsHomotopyBands}
Let $w = w_n \dots w_1$ be a homotopy string. We define a \emph{grading} $\mu$ on $w$ to be a function $\mu\dd\{0, \dots, n\} \to \Z$ such that $\mu(i-1) = \mu(i) + 1$ if $w_i$ is a direct homotopy letter and $\mu(i-1) = \mu(i) - 1$ if $w_i$ is an inverse homotopy letter for all $1 \leq i \leq n$.
We also denote by $\overline{\mu}\dd\{0, \dots, n\} \to \Z$ the function given by $\overline{\mu}(i)=\mu(n-i)$, which is a grading on the inverse homotopy string $\overline{w}$.

A non-trivial homotopy string $w = w_n \dots w_1$ is called a \emph{homotopy band} if:
\begin{enumerate}
    \item its endpoints coincide, $s(w_1) = t(w_n)$;
    \item there exists a grading $\mu$ on $w$ such that $\mu(0) = \mu(n)$;
    \item $w$ is not a proper power of another homotopy string;
    \item one of $w_1, w_n$ is direct and the other is inverse.
\end{enumerate}
\end{definition}

\begin{remark} ~
\begin{enumerate}
\item Regarding homotopy strings and homotopy bands, we opted to for the terminology used in \cite{arnesen2016morphisms} and \cite{opper2018geometric}. In \cite{bekkert2003indecomposables}, they are referred to as generalized strings and generalized bands.

\item As usual, we will consider natural equivalence relations on the sets of homotopy strings and homotopy bands.
For homotopy strings, the equivalence $\simSt$ is given by identifying $w$ and $\overline{w}$.
For bands, we define $\simBa$ by considering two homotopy bands equivalent if they differ only by cyclic rotation and possibly taking inverse.
The equivalence relation extends in an obvious way to pairs $(w,\mu)$ where $w$ is a homotopy string or band and $\mu$ is a grading.
\end{enumerate}
\end{remark}

\begin{remark}\label{RGradedDiagrams}
If we have a homotopy string $w = w_n \dots w_1$ with a grading $\mu$,
we represent it using an \textit{unfolded diagram} as in \cite{arnesen2016morphisms}, which is just the diagram for the corresponding walk as explained in Remark~\ref{RConventionsForUnfoldedDiagrams} together with values of the function $\mu$ above the vertices of the diagram:
$$\begin{tikzcd}[row sep=0.1cm]
\mu(n) & \mu(n-1) & \mu(1) & \mu(0) \\
\bullet \arrow[dash]{r}{w_n} & \bullet \arrow[dash, dotted]{r} & \bullet \arrow[dash]{r}{w_1} & \bullet
\end{tikzcd}$$
A particular example using the homotopy string from Remark~\ref{RConventionsForUnfoldedDiagrams} is
$$\begin{tikzcd}[row sep=0.1cm]
2 & 1 & 0 & 1 & 2 & 3
\\
\bullet \arrow[r, leftarrow, "e"] &
\bullet \arrow[r, leftarrow, "f"] &
\bullet \arrow[r, "c"] &
\bullet \arrow[r, "b"] &
\bullet \arrow[r, "af"] &
\bullet
\end{tikzcd}$$

If $w = w_n \dots w_1$ is a homotopy band with a grading $\mu$, its \emph{unfolded diagram} by definition infinitely repeats both to the left and to the right. We usually also decorate the homotopy letter $w_1$ with a fixed scalar $\lambda \in k^{\times}$, because this is an additional datum needed to define a corresponding one-dimensional band complex (see Definition~\ref{DOneDimBands} below):
$$\begin{tikzcd}[row sep=0.1cm, arrows = {decorate = false, decoration={snake, segment length=2mm, amplitude=0.25mm}}]
& & \mu(0) & \mu(n-1) & \mu(1) & \mu(0) & \\
{} & \arrow[dash, decorate = true]{l} \bullet \arrow[dash]{r}{\lambda w_1} & \bullet \arrow[dash]{r}{w_n} & \bullet \arrow[dash, dotted]{r} & \bullet \arrow[dash]{r}{\lambda w_1} & \bullet \arrow[dash]{r}{w_n} & \bullet \arrow[dash, decorate = true]{r} & {}
\end{tikzcd}$$
In order to see a particular example, consider the band $w=\overline{d}\overline{e}\overline{f}cba$ for the gentle algebra from Example~\ref{ERunningExample} (see \cite[Example 2.3]{arnesen2016morphisms}). The corresponding unfolded diagram then reads
$$\begin{tikzcd}[row sep=0.1cm, arrows = {decorate = false, decoration={snake, segment length=2mm, amplitude=0.25mm}}]
& & 3 & 2 & 1 & 0 & 1 & 2 & 3 & &
\\
{} &
\arrow[dash, decorate = true]{l}
\bullet  \arrow[r, "\lambda a"] &
\bullet \arrow[r, leftarrow, "d"] &
\bullet \arrow[r, leftarrow, "e"] &
\bullet \arrow[r, leftarrow, "f"] &
\bullet \arrow[r, "c"] &
\bullet \arrow[r, "b"] &
\bullet \arrow[r, "\lambda a"] &
\bullet \arrow[r, leftarrow, "d"] &
\bullet \arrow[dash, decorate = true]{r} &
{}
\end{tikzcd}$$
\end{remark}

If the underlying algebra $\Lambda$ has infinite global dimension, we need to work with infinite homotopy strings in addition to homotopy strings and homotopy bands in order to describe all indecomposable objects in $\Db(\Lambda\modl)$. Infinite homotopy strings arise from oriented cycles $a_m \dots a_1$ in $Q$ that have full relations, meaning that $a_i a_{i-1} \in I$ for $1 < i \leq n$ and $a_1 a_m \in I$.

\begin{definition}[{Infinite homotopy strings; \cite[Subsection 4.3 above Lemma 5]{bekkert2003indecomposables}, \cite[Definition~2.6 and the following discussion]{arnesen2016morphisms}}]\label{DInfiniteHomotopyStrings}
Let $w = w_n \dots w_1$ be a homotopy string such that $w_n$ is direct (in particular $w$ is non-trivial).
We say that $w$ is \emph{left resolvable} if there exists a cycle $a_m \dots a_1$ in $Q$ with full relations and $a_1 w$ is a homotopy string. 
Moreover, we call $w$ \emph{primitive left resolvable} if $w$ is left resolvable, but $w_{n-1} \dots w_1$ is not.
The notions of \emph{right resolvability} and \emph{primitive right resolvability} are defined dually so that $w$ is (primitive) right resolvable if and only if the inverse homotopy string $\overline{w}$ is (primitive) left resolvable.

For $w$ that is left resolvable
with a cycle $a_m \dots a_1$ in $Q$ with full relations, we form a \emph{left infinite homotopy} string ${^\infty w}$ by adding countably many copies of $a_m \dots a_1$ to the left of $w$ obtaining:
$$ {^\infty w} = \dots a_m a_{m-1} \dots a_1 \dots a_m a_{m-1} \dots a_1 w_1 \dots w_n.$$
For a right resolvable homotopy string, we form a \emph{right infinite homotopy} string $w^\infty$ in similar way.
If a homotopy string is both left and right resolvable, we can also form a \emph{two-sided infinite homotopy string} ${^\infty w^\infty}$ by combining the two constructions.
\end{definition}

\begin{remark} ~
\begin{enumerate}
\item Infinite homotopy strings are not specifically named in \cite{bekkert2003indecomposables}, so we opted to the term used in \cite{arnesen2016morphisms} and \cite{opper2018geometric}, which is compatible with naming of homotopy strings and homotopy bands.

\item Every left infinite homotopy string ${^\infty w}$ is determined by a unique primitive left resolvable homotopy string $w$. Analogously, every right infinite homotopy string $w^\infty$ is determined by a unique primitive right resolvable homotopy string $w$, and similarly for two-sided infinite homotopy strings.

\item If $w$ is left resolvable, any grading $\mu\dd\{0, \dots, n\} \to \Z$ uniquely extends to a grading ${^\infty\mu}\dd\{0, \dots, n, n+1, n+2, \dots\} \to \Z$ of the left infinite homotopy string ${^\infty w}$, and we can depict ${^\infty w}$ with ${^\infty\mu}$ in the form of an unfolded diagram analogous to that of ordinary homotopy strings as in Remark~\ref{RGradedDiagrams}.
Analogous comments apply to right and two-sided infinite homotopy strings.

For instance, if $w=a\overline{d}$ for the algebra from Example~\ref{ERunningExample} with the grading $\mu\dd\{0, \dots, n\} \to \Z$ such that $\mu(0)=0$, then we can extend $w$ to ${^\infty w^\infty}$ and $\mu$ to ${^\infty\mu^\infty}\dd\Z\to\Z$ and draw ${^\infty w^\infty}$ and ${^\infty \mu^\infty}$ in the form of an unfolded diagram
$$\begin{tikzcd}[row sep=0.1cm, column sep=0.7cm, arrows = {decorate = false, decoration={snake, segment length=2mm, amplitude=0.25mm}}]
& & -2 & -1 & 0 & 1 & 0 & -1 & -2 &
\\
{} &
\arrow[dash, decorate = true]{l}
\bullet \arrow[r, "a"] &
\bullet \arrow[r, "c"] &
\bullet \arrow[r, "b"] &
\bullet \arrow[r, "a"] &
\bullet \arrow[r, leftarrow, "d"] &
\bullet \arrow[r, leftarrow, "e"] &
\bullet \arrow[r, leftarrow, "f"] &
\bullet \arrow[r, leftarrow, "d"] &
\bullet \arrow[dash, decorate = true]{r} &
{}
\end{tikzcd}$$

\item We can define formal inverses of infinite homotopy strings or infinite homotopy strings with a grading in a natural way. If $w$ is a primitive left resolvable homotopy string, we put $\overline{^\infty w} = \overline{w}^\infty$. Similarly, if $w$ is both primitive left and right resolvable, we put $\overline{^\infty w^\infty} = {^\infty\overline{w}^\infty}$.
That is, left infinite homotopy strings invert to right infinite homotopy strings and vice versa; whereas, inverting a two-sided infinite homotopy string produces yet another two-sided infinite homotopy string.
We again define an equivalence relation $\simInfSt$ on the set of all infinite homotopy strings as well as on the set of all pairs consisting of an infinite homotopy string and a grading, which identifies a (graded) infinite homotopy string with its inverse.
\end{enumerate}
\end{remark}


\begin{convention}
From now on we will always assume that homotopy strings, infinite homotopy strings, and homotopy bands are equipped with some grading (which may remain only implicit in some arguments).
\end{convention}

\begin{definition}[{{}String and infinite string complexes; \cite[Definition~2]{bekkert2003indecomposables}, \cite[Subsection 2.1]{arnesen2016morphisms}}]\label{DStringComplexes}
To a homotopy string $w = w_n \dots w_1$ or an infinite homotopy string $w$ with grading $\mu$ we associate a complex of projectives $P_{(w, \mu)}$ as follows:
\begin{itemize}
    \item the projective module in cohomological degree $j$ is a (necessarily finite) direct sum of indecomposable projectives:
    $$\bigoplus_{\mu(i) = j} P(t(w_i))$$
    where $t(w_0)$ stands for $s(w_1)$ by convention if $w$ is finite or only left infinite;
    \item the differential is defined componentwise as follows: its only non-zero components are $P(t(w_i)) \overset{-\cdot w_i}{\longrightarrow} P(s(w_i))$ if $w_i$ is direct and $P(s(w_i)) \overset{-\cdot\overline{w_i}}{\longrightarrow} P(t(w_i))$ if $w_i$ is inverse, for all indices $i$.
\end{itemize}
If $w$ is finite, call the resulting complex $P_{(w, \mu)}$ a \emph{string complex}.
If $w$ is infinite, we call it an \emph{infinite string complex}.
\end{definition}

\begin{remark} ~
\begin{enumerate}
\item If $w$ is a homotopy string, then clearly $P_{(w, \mu)}\in\Kb(\Lambda\proj)$.
If $w$ is infinite, then $P_{(w, \mu)}$ has finitely generated projective components, it is bounded on the right and has a bounded cohomology by~\cite[Lemma~5(1)]{bekkert2003indecomposables}. Hence, up to quasi-isomorphism, we can consider $P_{(w, \mu)}$ as an object of $\Db(\Lambda\modl)$.
\item Two string complexes $P(w_1, \mu_1)$ and $P(w_2, \mu_2)$ are isomorphic if and only if $(w_1, \mu_1) \simSt (w_2, \mu_2)$.
Two infinite string complexes $P(w_1, \mu_1)$ and $P(w_2, \mu_2)$ are isomorphic if and only if $(w_1, \mu_1) \simInfSt (w_2, \mu_2)$.
A string complex is never isomorphic to an infinite string complex.
We refer to \cite[Theorem 3]{bekkert2003indecomposables} for details.
\end{enumerate}
\end{remark}

\begin{definition}[{One-dimensional band complexes; \cite[Definition 3]{bekkert2003indecomposables}, \cite[Subsection~2.2]{arnesen2016morphisms}}]\label{DOneDimBands}
To a homotopy band $w = w_n \dots w_1$ with grading $\mu$ and a fixed scalar $\lambda \in k^{\times}$, we associate a complex of projectives $B_{(w, \mu), \lambda, 1}$, where we consider the indices $i$ of modulo $n$:
\begin{itemize}
    \item the projective module in degree $j$ is a direct sum of indecomposable projectives:
    $$\bigoplus_{0 \leq i < n: \mu(i) = j} P(t(w_i)),$$
    in the same way as for string complexes;
    \item the differential is defined similarly to string complexes if the length of the string is $n\ge 3$:
    its only non-zero components are $P(t(w'_i)) \overset{-\cdot w'_i}{\longrightarrow} P(s(w'_i))$, where $w'_i = w_i$ if $w_i$ is direct and $w'_i = \overline{w_i}$ if $w_i$ is inverse, with the exception of $i=1$ where the component is $P(t(w'_1)) \overset{-\cdot \lambda w'_1}{\longrightarrow} P(s(w'_1))$;
    \item in the somewhat degenerate case $n=2$, we necessarily have $s(w_1)=s(w_2)$, $t(w_1)=t(w_2)$ and the entire complex has two terms and is of the form
    $$\begin{tikzcd}
      P(t(w'_1)) \arrow[rr, "-\cdot(w'_2 + \lambda w'_1)"] &&
      P(s(w'_1)).
    \end{tikzcd}$$
\end{itemize}
We refer to such complexes as to \emph{one-dimensional band complexes}. 
\end{definition}

\begin{remark}\label{RHigherDimensionalBands}
For each homotopy band $w = w_n \dots w_1$ with grading $\mu$, a fixed scalar $\lambda \in k^{\times}$ and $n>1$, there are also indecomposable \emph{higher-dimensional band complexes} $B_{(w, \mu), \lambda, n}\in\Kb(\Lambda\proj)$.
We refer to \cite[Definition~3]{bekkert2003indecomposables} and especially \cite[Section 5]{arnesen2016morphisms} for a detailed account.
As far as we are concerned, it is only important that for each $w$, $\mu$ and $\lambda$ and $n \ge 1$, we have a triangle in $\Kb(\Lambda\proj)$ of the form
$$\begin{tikzcd}
B_{(w, \mu), \lambda, n} \arrow{r} &
B_{(w, \mu), \lambda, n-1} \arrow{r} \oplus B_{(w, \mu), \lambda, n+1} \arrow{r} &
B_{(w, \mu), \lambda, n} \arrow{r} &
B_{(w, \mu), \lambda, n}[1],
\end{tikzcd}$$
where $B_{(w, \mu), \lambda, 0}=0$ by convention if $n=1$.
In particular, if $\mcC\subseteq\Kb(\Lambda\proj)$ is a full subcategory closed under extensions and summands, then $B_{(w, \mu), \lambda, n}\in\mcC$ for some $n\ge 1$ if and only if $B_{(w, \mu), \lambda, n}\in\mcC$ for \emph{all} $n\ge 1$.
\end{remark}

\begin{remark}
The only non-trivial isomorphisms between band complexes are $B_{(w_1, \mu_1), \lambda, n} \cong B_{(w_2,\mu_2), \lambda^\varepsilon, n}$, where $(w_1,\mu_1) \simBa (w_2,\mu_2)$ and $\varepsilon=\pm1$, where the sign depends on whether the passage from $w_1$ to $w_2$ can be achieved only by a cyclic rotation or whether we need to pass to an inverse homotopy band as well.
There are no isomorphisms between band complexes and homotopy string complexes or infinite homotopy string complexes.
\end{remark}

Finally, having defined all necessary notions, we can state the result of \cite{bekkert2003indecomposables} that characterizes indecomposable objects of $\Db(\Lambda \modl)$ as string, infinite string, and band complexes:

\begin{theorem}[{Characterization of indecomposables in $\Db(\Lambda\modl)$; \cite[Theorem~3]{bekkert2003indecomposables}}]\label{TIndecomposablesStringsBands}
There are one-to-one correspondences between:
\begin{enumerate}
\item isomorphism classes of indecomposable objects of $\Kb(\Lambda\proj)$ and isomorphism classes of string and band complexes;
\item isomorphism classes of indecomposable objects of $\Db(\Lambda\modl)$ which are not in $\Kb(\Lambda\proj)$ and isomorphism classes of infinite string complexes.
\end{enumerate}
\end{theorem}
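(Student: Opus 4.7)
The plan is to reduce the classification to a matrix problem in the style of Bondarenko, Crawley-Boevey, or Burt–Butler, and then to solve that matrix problem combinatorially. First I would replace $\Db(\Lambda\modl)$ by its equivalent incarnation $K^{-,b}(\Lambda\proj)$ fixed in the conventions, and use Krull--Schmidt (valid for $K^{-,b}(\Lambda\proj)$ since each term has local endomorphism ring and bounded cohomology gives a discrete finiteness condition on minimal representatives). This reduces the theorem to classifying indecomposable objects up to isomorphism in $K^{-,b}(\Lambda\proj)$, separated according to whether or not the minimal projective representative has finitely many nonzero terms.

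Next I would pass each indecomposable complex to its minimal projective model, so that every entry of the differential lies in the radical and is a $k$-linear combination of basis elements of the form $P(y)\to P(x)$ given by paths from $x$ to $y$ not in $I$, as per the convention identifying paths with canonical basis maps. Using the two defining conditions on a gentle algebra---at each vertex at most two arrows in/out, and the $I$-avoidance conditions (2)--(3) of Definition~\ref{DGentleAlgebras}---I would then perform a sequence of elementary column and row operations (equivalently, automorphisms of the projective terms and passage to homotopy equivalent complexes) to put the differential in a \emph{canonical form} whose only nonzero matrix entries are single paths (no nontrivial linear combinations, except for the single scalar $\lambda$ that appears in the band case). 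The key local combinatorial input is that for any indecomposable projective $P(x)$, the gentle axioms limit the possible pairs (incoming arrow, outgoing arrow) at $x$ contributing to successive differentials, so that after reduction one obtains at each vertex of the complex a ``homotopy letter'' attached to a paths-or-inverses datum.

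Having put every indecomposable in such a canonical form, its underlying combinatorial data is a graded sequence of direct/inverse paths that connect at endpoints according to the requirements of Definitions~\ref{DHomotopyStrings} and~\ref{DGrandingsHomotopyBands}: axioms (ii) and (iii) are precisely the conditions ensuring that consecutive letters cannot be further simplified, and the grading is dictated by the cohomological degree. Indecomposability is preserved under the reduction, and the resulting data is either a finite homotopy string, a homotopy band (which returns to itself and requires an extra Jordan block parameter $\lambda$ and a multiplicity $n$, yielding the higher-dimensional band complexes of Remark~\ref{RHigherDimensionalBands}), or an infinite homotopy string when the reduction cannot terminate on one or both sides---this last case occurs exactly when the ``forced continuation'' at an end of the string follows a cycle $a_m\cdots a_1$ with full relations, which is Definition~\ref{DInfiniteHomotopyStrings}. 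Conversely, one constructs the complexes $P_{(w,\mu)}$ and $B_{(w,\mu),\lambda,n}$ explicitly from such data (as in Definitions~\ref{DStringComplexes} and~\ref{DOneDimBands}) and checks indecomposability by computing the endomorphism ring, which turns out to be local; the equivalences $\simSt$, $\simBa$, $\simInfSt$ are then exactly the redundancies in the parametrization.

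The main obstacle is the canonical-form reduction itself: proving that repeated elementary operations terminate in a normal form whose data is precisely a (possibly infinite) homotopy string or homotopy band, with no residual choices. For bounded complexes this is essentially a finite problem governed by the gentle relations; for unbounded-on-the-left complexes with bounded cohomology, the termination argument must be coupled with a careful analysis of how the cycles with full relations force the infinite tail---this is where one genuinely uses that $\Lambda$ is gentle and not just special biserial, and it is the part of the Bekkert--Merklen argument that requires the most bookkeeping. A secondary subtlety is verifying that complexes associated to distinct equivalence classes are non-isomorphic and that no string complex is isomorphic to an infinite string complex or a band complex; this can be settled by reading off invariants (such as the support in each cohomological degree together with the canonical form of the differential) directly from the combinatorial data.
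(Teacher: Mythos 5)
This statement is not proved in the paper at all: it is imported verbatim as Theorem~3 of Bekkert--Merklen \cite{bekkert2003indecomposables}, and the paper's role for it is purely that of a citation. So the relevant comparison is between your plan and the original source, and there your proposal is a reasonable high-level reconstruction of the known strategy (pass to $K^{-,b}(\Lambda\proj)$, use Krull--Schmidt, take minimal radical complexes, reduce the differential to a combinatorial normal form, read off homotopy strings, bands, and infinite strings, and identify the redundancies with $\simSt$, $\simBa$, $\simInfSt$).

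However, as written it is a plan rather than a proof, and the gap is exactly the part you flag yourself: the ``canonical form reduction'' is asserted, not carried out. In \cite{bekkert2003indecomposables} this is not done by ad hoc row and column operations terminating in a normal form; the classification is obtained by translating the category of (minimal) projective complexes into a known matrix problem (representations of bunches/bundles of semichains in the sense of Bondarenko), whose indecomposable canonical forms are already classified and then reinterpreted as string, band, and infinite string data. Your sketch neither sets up that translation nor proves termination and uniqueness of your proposed reduction, and for objects of $\Db(\Lambda\modl)$ outside $\Kb(\Lambda\proj)$ the claim that the reduction ``cannot terminate exactly when a cycle with full relations forces the tail'' (together with boundedness of cohomology, cf.\ Lemma~5 of \cite{bekkert2003indecomposables}) is precisely the delicate point that needs an argument. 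Likewise, the injectivity of the parametrization --- that complexes attached to non-equivalent strings/bands are non-isomorphic, and that string, infinite string, and band complexes are pairwise non-isomorphic --- is only gestured at via ``invariants read off the canonical form,'' which presupposes the canonical form you have not established. To make this rigorous you would either have to invoke Bondarenko's matrix classification as Bekkert--Merklen do, or supply the full bookkeeping yourself; in the context of the present paper the honest proof is simply the citation.
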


\subsection{Morphisms in the derived category of a gentle algebra}
After having described the indecomposable objects in $\Db(\Lambda\modl)$, we focus on morphisms between them. In this subsection, we present a combinatorial description of morphisms, as given in \cite{arnesen2016morphisms}.

We structure this subsection similarly to the previous one. We begin by giving a combinatorial description of certain distinguished maps between string, infinite string, and one-dimensional complexes, and we conclude by stating the main result of \cite{arnesen2016morphisms} that gives a basis of maps between both in the category of complexes over $\Lambda\proj$ and the bounded derived category $\Db(\Lambda\modl)$ in terms of those special maps.

Before we define any maps between string, infinite string, and one-dimensional complexes, we note that by virtue of their definition we can describe the maps using their unfolded diagrams (cf. discussion at the beginning of Section 3 in \cite{arnesen2016morphisms}). Suppose that $M$ and $N$ are string, infinite string, or one-dimensional band complexes and that $f\colon M \to N$ is a map of complexes between them. The map $f$ consists of maps $f^i: M^i \to N^i$; since we have defined $M^i$ and $N^i$ as direct sums of certain indecomposable projectives, $f^i$ can be thought of as a matrix of maps between them given by linear combination of paths between corresponding vertices in the unfolded diagrams of $M$ and $N$.

We note that there are examples of how maps between unfolded diagrams translate to maps between corresponding complexes throughout Section 3.1 in \cite{arnesen2016morphisms}.

In accordance with \cite{arnesen2016morphisms}, we omit the scalar $\lambda \in k^\times$ for one-dimensional complexes from the unfolded diagrams to allow for more clarity in the definitions. However, the reader is encouraged to keep in mind that it is still implicitly present.

\begin{definition}[Single and singleton single maps; Definition 3.1 and 3.7 in \cite{arnesen2016morphisms}]\label{DSingletonSingleMaps}
Suppose that $M$ and $N$ are string, infinite string, or one-dimensional band complexes and that $p$ is a non-stationary path in the quiver $Q$. Then the following configuration of unfolded diagrams:
$$\begin{tikzcd}[arrows = {decorate = false, decoration={snake, segment length=2mm, amplitude=0.25mm}}, /tikz/column 1/.append style={anchor=base east}]
    M: & \arrow[dash, decorate = true]{l} \bullet \arrow[dash]{r}{m_{i+1}} & \bullet \arrow[]{d}{p} \arrow[dash]{r}{m_i} & \bullet \arrow[dash, decorate = true]{r} & {} \\
    N: & \arrow[dash, decorate = true]{l} \bullet \arrow[dash]{r}{n_{i+1}} & \bullet \arrow[dash]{r}{n_i} & \bullet \arrow[dash, decorate = true]{r} & {}
\end{tikzcd}$$
gives rise to a \emph{single map} from $M$ to $N$ if the following conditions are met:
\begin{itemize}
	\item[\textbf{(L1)}] if $m_{i+1}$ is direct, then $m_{i+1} p = 0$;
	\item[\textbf{(R1)}] if $m_i$ is inverse, then $m_i p = 0$;
	\item[\textbf{(L2)}] if $n_{i+1}$ is inverse, then $p n_{i+1} = 0$;
	\item[\textbf{(R2)}] if $n_i$ is direct, then $p n_i = 0$.
\end{itemize}
The map is called \emph{singleton single map} if it arises from one of the following four configurations (up to inverting one of the homotopy strings) that satisfy the conditions above:
\begin{enumerate}[(i)]
    \item $$\begin{tikzcd}[arrows = {decorate = false, decoration={snake, segment length=2mm, amplitude=0.25mm}}, /tikz/column 1/.append style={anchor=base east}]
    M: & \arrow[dash, decorate = true]{l} \bullet \arrow[dash]{r}{m_{i+1}} & \bullet \arrow[]{d}{p} & \\
    N: & & \bullet \arrow[dash]{r}{n_i} & \bullet \arrow[dash, decorate = true]{r} & {}
\end{tikzcd}$$

	\item $$\begin{tikzcd}[arrows = {decorate = false, decoration={snake, segment length=2mm, amplitude=0.25mm}}, /tikz/column 1/.append style={anchor=base east}]
    M: & \arrow[dash, decorate = true]{l} \bullet \arrow[dash]{r}{m_{i+1}} & \bullet \arrow[]{d}{p} \arrow{r}{m_i}[swap]{=p p_R} & \bullet \arrow[dash, decorate = true]{r} & {} \\
    N: & & \bullet \arrow[dash]{r}{n_i} & \bullet \arrow[dash, decorate = true]{r} & {}
\end{tikzcd}$$

    \item $$\begin{tikzcd}[arrows = {decorate = false, decoration={snake, segment length=2mm, amplitude=0.25mm}}, /tikz/column 1/.append style={anchor=base east}]
    M: & \arrow[dash, decorate = true]{l} \bullet \arrow[dash]{r}{m_{i+1}} & \bullet \arrow[]{d}{p} & \\
    N: & \arrow[dash, decorate = true]{l} \bullet \arrow{r}{n_{i+1}}[swap]{=p_L p} & \bullet \arrow[dash]{r}{n_i} & \bullet \arrow[dash, decorate = true]{r} & {}
\end{tikzcd}$$

    \item $$\begin{tikzcd}[arrows = {decorate = false, decoration={snake, segment length=2mm, amplitude=0.25mm}}, /tikz/column 1/.append style={anchor=base east}]
M: & \arrow[dash, decorate = true]{l} \bullet \arrow[dash]{r}{m_{i+1}} & \bullet \arrow[]{d}{p} \arrow{r}{m_i}[swap]{= p p_R} & \bullet \arrow[dash, decorate = true]{r} & {}\\
N: & \arrow[dash, decorate = true]{l} \bullet \arrow{r}{n_{i+1}}[swap]{= p_L p} & \bullet \arrow[dash]{r}{n_i} & \bullet \arrow[dash, decorate = true]{r} & {}
\end{tikzcd}$$
\end{enumerate}
The paths $p$, $p_L$ and $p_R$ are required to be non-stationary. Moreover, in cases (i) and (ii), if $m_{i+1}$ is inverse, $p$ is not an initial homotopy substring of $m_{i+1}$ and $m_{i+1}$ is not an initial homotopy substring of $p$; in cases (i) and (iii), if $n_i$ is inverse, $p$ is not a terminal homotopy substring of $n_i$ and $n_i$ is not $n_i$ is not a terminal homotopy substring of~$p$.
\end{definition}

\begin{remark}
In the unfolded diagrams above and also later in the text, we follow the convention that the arrows without specified direction, which are depicted as $\begin{tikzcd} \bullet \arrow[dash]{r} & \bullet \end{tikzcd}$, may stand also for a zero component.
On the other hand, arrows indicated by $\begin{tikzcd} \bullet \arrow{r} & \bullet \end{tikzcd}$  and $\begin{tikzcd} \bullet & \arrow{l} \bullet \end{tikzcd}$ must be non-zero.

The fact that the unfolded diagram possibly continues to the left, to the right, and in between is indicated by $\begin{tikzcd}[arrows = {decorate = false, decoration={snake, segment length=2mm, amplitude=0.25mm}}] {} & \arrow[dash, decorate = true]{l} \bullet \end{tikzcd}$, $\begin{tikzcd}[arrows = {decorate = false, decoration={snake, segment length=2mm, amplitude=0.25mm}}] \bullet \arrow[dash, decorate = true]{r} & {} \end{tikzcd}$, and $\begin{tikzcd}[arrows = {decorate = false, decoration={snake, segment length=2mm, amplitude=0.25mm}}] \bullet \arrow[dash, dotted]{r} & \bullet \end{tikzcd}$, respectively.
\end{remark}

\begin{remark}
The last two conditions in the definition above guarantee two things: due to their first parts, the fact that the configurations (i) to (iv) for singleton single maps are disjoint, and, due to their second parts, non-existence of non-trivial chain homotopies to other single or double maps, so the maps of complexes are indeed singleton in their homotopy class.
\end{remark}

\begin{definition}[Double and singleton double maps; Definition 3.3 and 3.8 in \cite{arnesen2016morphisms}]\label{DSingletonDoubleMaps}
Suppose that $M$ and $N$ are string, infinite string, or one-dimensional band complexes and that $f$ is a non-stationary path in the quiver $Q$. Then the following configuration of unfolded diagrams:
$$\begin{tikzcd}[arrows = {decorate = false, decoration={snake, segment length=2mm, amplitude=0.25mm}}, /tikz/column 1/.append style={anchor=base east}]
M: & \arrow[dash, decorate = true]{l} \bullet \arrow[dash]{r}{m_{i+1}} & \bullet \arrow[']{d}{p_L} \arrow{r}{m_i} & \bullet \arrow[]{d}{p_R} \arrow[dash]{r}{m_{i-1}} & \bullet \arrow[dash, decorate = true]{r} & {} \\
N: & \arrow[dash, decorate = true]{l} \bullet \arrow[dash]{r}{n_{i+1}} & \bullet \arrow{r}{n_i} & \bullet \arrow[dash]{r}{n_{i-1}} & \bullet \arrow[dash, decorate = true]{r} & {}
\end{tikzcd}$$
gives rise to a \emph{double map} from $M$ to $N$ if the following conditions are met:
\begin{itemize}
    \item[\textbf{(C)}] $p_L n_i = m_i p_R$;
	\item[\textbf{(L1)}] if $m_{i+1}$ is direct, then $m_{i+1} p_L = 0$;
	\item[\textbf{(R1)}] if $m_{i-1}$ is inverse, then $m_{i-1} p_R = 0$;
	\item[\textbf{(L2)}] if $n_{i+1}$ is inverse, then $p_L n_{i+1} = 0$;
	\item[\textbf{(R2)}] if $n_{i-1}$ is direct, then $p_R n_{i-1} = 0$.
\end{itemize}
The map is called \emph{singleton double map} if it arises from the configuration that satisfies the conditions above and, moreover, there exists a non-stationary path $p'$ so that:
$$\begin{tikzcd}[arrows = {decorate = false, decoration={snake, segment length=2mm, amplitude=0.25mm}}, /tikz/column 1/.append style={anchor=base east}]
M: & \arrow[dash, decorate = true]{l} \bullet \arrow[dash]{r}{m_{i+1}} & \bullet \arrow[']{d}{p_L} \arrow{r}{m_i}[swap]{=p_L p'} & \bullet \arrow[]{d}{p_R} \arrow[dash]{r}{m_{i-1}} & \bullet \arrow[dash, decorate = true]{r} & {} \\
N: & \arrow[dash, decorate = true]{l} \bullet \arrow[dash]{r}{n_{i+1}} & \bullet \arrow{r}{n_i}[swap]{=p' p_R} & \bullet \arrow[dash]{r}{n_{i-1}} & \bullet \arrow[dash, decorate = true]{r} & {}
\end{tikzcd}$$
\end{definition}

\begin{definition}[Graph maps; Definition 3.9 in \cite{arnesen2016morphisms}]\label{DGraphMaps}
Suppose that $M$ and $N$ are string, infinite string, or one-dimensional band complexes. Consider a maximal overlap of the unfolded diagrams as follows:
$$\begin{tikzcd}[arrows = {decorate = false, decoration={snake, segment length=2mm, amplitude=0.25mm}}, /tikz/column 1/.append style={anchor=base east}]
M: & \arrow[dash, decorate = true]{l} \bullet \arrow[dash]{r}{m_L} & \bullet \arrow[equal]{d} \arrow[dash]{r}{\ell_p} & \bullet \arrow[equal]{d} \arrow[dash]{r}{\ell_{p-1}} & \bullet \arrow[dash, dotted]{r} & \bullet \arrow[dash]{r}{\ell_2} & \bullet \arrow[equal]{d} \arrow[dash]{r}{\ell_1} & \bullet \arrow[equal]{d} \arrow[dash]{r}{m_R} & \bullet \arrow[dash, decorate = true]{r} & {}\\
N: & \arrow[dash, decorate = true]{l} \bullet \arrow[dash]{r}{n_L} & \bullet \arrow[dash]{r}{\ell_p} & \bullet \arrow[dash]{r}{\ell_{p-1}} & \bullet \arrow[dash, dotted]{r} & \bullet \arrow[dash]{r}{\ell_2} & \bullet \arrow[dash]{r}{\ell_1} & \bullet \arrow[dash]{r}{n_R} & \bullet \arrow[dash, decorate = true]{r} & {}
\end{tikzcd}$$
It gives rise to a \emph{graph map} from $M$ to $N$ if one the following endpoint conditions is met
and one of dual endpoint conditions \textbf{(RG1)}, \textbf{(RG2)}, or \textbf{(RG$\infty$)} is met as well:
\begin{itemize}
	\item[\textbf{(LG1)}] if $m_L$ and $n_L$ are both direct or both inverse, then there exists a non-stationary path $f_L$ such that $m_L = f_L n_L$ or $n_L = m_L f_L$, respectively;
	\item[\textbf{(LG2)}] if $m_L$ and $n_L$ are neither both direct nor both inverse, then $m_L$ is zero or inverse and $n_L$ is zero or direct; 
	\item[\textbf{(LG$\infty$)}] both strings continue infinitely to the left: the diagram continues infinitely to the left with commuting squares in which its vertical maps are isomorphisms;
\end{itemize}
\end{definition}

\begin{definition}[Quasi-graph maps; Definition 3.11 in \cite{arnesen2016morphisms}]\label{DQuasiGraphMaps}
Suppose that $M$ and $N$ are string, infinite string, or one-dimensional band complexes. Then a maximal overlap of the unfolded diagrams as follows:
$$\begin{tikzcd}[arrows = {decorate = false, decoration={snake, segment length=2mm, amplitude=0.25mm}}, /tikz/column 1/.append style={anchor=base east}]
M: & \arrow[dash, decorate = true]{l} \bullet \arrow[dash]{r}{m_L} & \bullet \arrow[equal]{d} \arrow[dash]{r}{\ell_p} & \bullet \arrow[equal]{d} \arrow[dash]{r}{\ell_{p-1}} & \bullet \arrow[dash, dotted]{r} & \bullet \arrow[dash]{r}{\ell_2} & \bullet \arrow[equal]{d} \arrow[dash]{r}{\ell_1} & \bullet \arrow[equal]{d} \arrow[dash]{r}{m_R} & \bullet \arrow[dash, decorate = true]{r} & {}\\
N: & \arrow[dash, decorate = true]{l} \bullet \arrow[dash]{r}{n_L} & \bullet \arrow[dash]{r}{\ell_p} & \bullet \arrow[dash]{r}{\ell_{p-1}} & \bullet \arrow[dash, dotted]{r} & \bullet \arrow[dash]{r}{\ell_2} & \bullet \arrow[dash]{r}{\ell_1} & \bullet \arrow[dash]{r}{n_R} & \bullet \arrow[dash, decorate = true]{r} & {}
\end{tikzcd}$$
gives rise to a \emph{quasi-graph map} from $M$ to $N[1]$ provided that
\begin{itemize}
  \item none of the endpoint conditions \textbf{(LG1)}, \textbf{(LG2)}, \textbf{(LG$\infty$)} \textbf{(RG1)}, \textbf{(RG2)}, or \textbf{(RG$\infty$)} holds,
  \item if $p=0$ and both $m_L$ and $n_R$ are direct, then $m_L n_R=0$,
  \item if $p=0$ and both $m_R$ and $n_L$ are inverse, then $m_R n_L=0$.
\end{itemize}
The quasi-graph map is represented by single maps from $M$ to $N[1]$ given by $\ell_p, \dots, \ell_1$ and possibly also double maps arising from $m_L, n_L$ and $m_R, n_R$ (cf. Definition 3.12 in \cite{arnesen2016morphisms}).
\end{definition}

\begin{remark}
The additional conditions in the last definition when $p=0$ do not seem to be treated in the literature, but they are necessary to define the corresponding single map $M\to N[1]$. Consider for example the quasi-graph map situation:
$$\begin{tikzcd}[arrows = {decorate = false, decoration={snake, segment length=2mm, amplitude=0.25mm}}, /tikz/column 1/.append style={anchor=base east}]
M: & \arrow[dash, decorate = true]{l} \bullet \arrow{r}{m_L} & \bullet \arrow[equal]{d} & & {}\\
N: & & \bullet \arrow{r}{n_R} & \bullet \arrow[dash, decorate = true]{r} & {}
\end{tikzcd}$$
The corresponding map $M\to N[1]$ is then represented for example by the following unfolded diagram, and this makes sense only provided that $m_L n_R=0$:
$$\begin{tikzcd}[arrows = {decorate = false, decoration={snake, segment length=2mm, amplitude=0.25mm}}, /tikz/column 1/.append style={anchor=base east}]
M: & \arrow[dash, decorate = true]{l} \bullet \arrow{r}{m_L} & \bullet \arrow{d}{n_R} & & {}\\
N: & \bullet \arrow{r}{n_R} & \bullet \arrow[dash]{r} & \bullet \arrow[dash, decorate = true]{r} & {}
\end{tikzcd}$$
\end{remark}

\begin{theorem}[Basis of maps between string, infinite string, and one-dimensional band complexes; Proposition 4.1 and Theorem 3.15 in \cite{arnesen2016morphisms}]\label{TBasisMaps}
Suppose that $M$ and $N$ are string, infinite string, or one-dimensional band complexes. Then the following statements hold:
\begin{enumerate}[(i)]
\item graph maps, single maps, and double maps form a $k$-linear basis of the space $\Hom_{C(\Lambda\proj)}(M, N)$;
\item graph maps, singleton single maps, singleton double maps, and quasi-graph maps form a $k$-linear basis of $\Hom_{\Db(\Lambda\modl)}(M, N)$;
\end{enumerate}
\end{theorem}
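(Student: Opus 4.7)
The plan is to reduce both parts to a direct combinatorial analysis of matrices of paths between vertices of the unfolded diagrams of $M$ and $N$, exploiting the fact that for a gentle algebra, $\Hom_\Lambda(P(x),P(y))$ has a canonical basis consisting of non-zero paths from $y$ to $x$, and that the relations are generated by paths of length two. Concretely, a chain map $f\dd M\to N$ is encoded in each cohomological degree by a matrix whose entries are $k$-linear combinations of paths, and the chain map condition reads componentwise as a family of equations of the form $p\cdot n_i = m_i\cdot p'$ in $\Lambda$.

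For part (i), I would argue entry-by-entry. Fix a path $p$ from a vertex $v$ of $M^i$ to a vertex $w$ of $N^i$ appearing with non-zero coefficient in the component $f^i$. The chain map condition together with the gentle conditions on $\Lambda$ (each vertex has at most two incoming/outgoing arrows, and at most one composable pair continues outside $I$) severely restricts the neighboring entries of $f$: the contributions of $p$ on the horizontal arrows $m_{i\pm 1}$ and $n_{i\pm 1}$ must either vanish (producing the boundary conditions \textbf{(L1)}, \textbf{(R1)}, \textbf{(L2)}, \textbf{(R2)} of a single map), or must be paired with a second non-zero entry giving a commuting square (producing a double map), or they must be absorbed into an overlap block of identities (producing a graph map). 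By induction on the number of non-zero entries one decomposes $f$ uniquely as a $k$-linear combination of these three families. Linear independence is immediate from the fact that each basis element occupies a combinatorially distinguishable pattern of non-zero entries.

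For part (ii), the plan is to pass to the homotopy category by identifying null-homotopies and then accounting for morphisms that arise only after inverting quasi-isomorphisms. The non-singleton single and non-singleton double maps are precisely those in which the vertical path $p$ factors through a horizontal differential of $M$ or $N$, i.e.\ one of the configurations allows one to write $p = m_i p_R$ or $p = p_L n_i$ etc.; such a factorization exhibits $f$ as $d_N h + h d_M$ for an explicit homotopy $h$. Conversely, the endpoint conditions in the definitions of singleton maps and graph maps are designed exactly to rule out any such factorization. Hence modulo homotopy, only graph maps, singleton single, and singleton double maps survive among genuine chain maps; since $\Db(\Lambda\modl)\simeq K^{-,b}(\Lambda\proj)$, we now need to account for morphisms coming from homotopies that fail to produce chain maps --- this is exactly the role of quasi-graph maps, whose defining overlap fails the endpoint conditions \textbf{(LG$\ast$)} and \textbf{(RG$\ast$)} but instead shifts the resulting morphism by one degree. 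One checks that a quasi-graph overlap of $M$ and $N[-1]$ produces a well-defined, non null-homotopic morphism $M\to N$, described by a collection of single (and possibly double) maps as in Definition~\ref{DQuasiGraphMaps}.

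The main obstacle is the last step: proving that graph maps, singleton single, singleton double, and quasi-graph maps together \emph{span} $\Hom_{\Db(\Lambda\modl)}(M,N)$ and remain linearly independent. Spanning amounts to showing that every homotopy class of chain maps contains a representative supported on the patterns above, which requires a careful bookkeeping of how homotopies act on the basis of part (i). Linear independence is subtler than in part (i), since it must be checked modulo all possible null-homotopies, and the proof requires producing, for each basis element, an explicit test morphism (or a mapping cone computation) showing it is not null-homotopic. This is where the case analysis is most delicate and where one essentially has to replicate the classification of morphisms between string and band modules over gentle algebras, adapted to the derived setting as carried out in~\cite{arnesen2016morphisms}.
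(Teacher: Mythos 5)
The paper does not actually prove this statement: Theorem \ref{TBasisMaps} is imported verbatim from \cite{arnesen2016morphisms} (Proposition 4.1 and Theorem 3.15 there), so the only ``proof'' in the text is the citation. Your proposal is therefore an attempt to reconstruct the argument of that reference, and as written it is a programme rather than a proof: the spanning claim in (i) (``by induction on the number of non-zero entries one decomposes $f$ uniquely\dots'') and, as you concede yourself in the final paragraph, the spanning and linear-independence claims in (ii) are precisely the delicate case analyses that constitute the content of \cite{arnesen2016morphisms}. Deferring them to ``a careful bookkeeping'' leaves the theorem unproved.

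Moreover, part (ii) of your sketch rests on a claim that is false. You assert that every non-singleton single or double map admits a factorization exhibiting it as $d_N h + h d_M$, hence vanishes in the homotopy category, and that quasi-graph maps then account for extra morphisms ``coming from homotopies that fail to produce chain maps.'' Neither half is correct. Since $M$ and $N$ are right-bounded complexes of projectives, $\Hom_{\Db(\Lambda\modl)}(M,N)$ is computed by homotopy classes of honest chain maps in $K^{-,b}(\Lambda\proj)$; no morphisms appear beyond these, so there is nothing to ``account for'' past the quotient by null-homotopies. And the non-singleton single and double maps are not in general null-homotopic: as recalled in the remark following Theorem \ref{TBasisMaps} (Proposition 4.8 in \cite{arnesen2016morphisms}), a quasi-graph map is exactly the common homotopy class of a family of homotopy non-trivial single and double maps that are homotopic to one another but to no graph or singleton map. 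Thus the bookkeeping your argument needs --- which single/double maps genuinely die, which merely coalesce into a quasi-graph class, and why graph maps and singleton maps are homotopic to no scalar multiple of any other basis map --- is the part your proposal omits, and the heuristic you substitute for it contradicts the statement being proved.
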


\begin{remark}
In light of the theorem above, we refer to graph maps, single maps, and double maps as to basis maps. A graph map is not homotopic to a scalar multiple of another basis map as proved in Subsection 4.4 in \cite{arnesen2016morphisms}. Proposition 4.8 in \cite{arnesen2016morphisms} illustrates that quasi-graph maps represent homotopy classes of homotopy non-trivial single or double maps that are homotopic to some other single or double maps (cf. Definition 3.12 \cite{arnesen2016morphisms}). The remaining homotopy non-trivial single and double maps are called singleton because they are not homotopic to any scalar multiple of any other basis map (cf. Remark 4.7 in \cite{arnesen2016morphisms})
\end{remark}

The study of morphisms including higher-dimensional band complexes can be essentially reduced to the one-dimensional case with some additional effort as shown in Section 5 in \cite{arnesen2016morphisms}.

\subsection{Geometric model of a derived category of a gentle algebra}
There is an elegant geometric model of a derived category of gentle algebra developed by \cite{opper2018geometric}; structure of the derived category is captured by curves on a marked surface and their intersections. In this subsection, we present the geometric model in a formalism of \cite{amiot2019complete}, which differs slightly from the one of \cite{opper2018geometric} (and also for the essentially equivalent model~\cite{baur2021geometric} which was developed to describe the module category). Successively, we focus on several aspects of the geometric model, namely: marked surfaces and how they model gentle algebras, (graded) curves on the marked surface as models of objects in the derived category, morphisms between objects represented by intersections of curves, and representation of mapping cones in the geometric model.

\subsubsection{Marked surfaces and their dissections}
\begin{definition}[Marked surface; Definition 1.7 in \cite{amiot2019complete}]\label{DMarkedSurfaces}
Let $S$ be the interior of a compact oriented open smooth surface with boundary $\partial S$; let $M = M_{\gpoint} \cup M_{\rpoint}$ be finite set of marked points on $S \cup \partial S$ such that each connected component of $\partial S$ contains at least one marked point and that $\gpoint$-marked points (elements of $M_{\gpoint}$) and $\rpoint$-marked points (elements of $M_{\rpoint}$) alternate on each connected component of $\partial S$, and let $P = P_{\gpoint} \cup P_{\rpoint}$ be a finite set of marked points in $S$ called punctures (similarly as for marked points, elements of $P_{\gpoint}$ and $P_{\rpoint}$ are called $\gpoint$-punctures and $\rpoint$-punctures, respectively). If $\partial S$ is empty, it is required that both $P_{\gpoint}$ and $P_{\rpoint}$ be non-empty.
We refer to elements of $M_{\gpoint} \cup P_{\gpoint}$ and $M_{\rpoint} \cup P_{\rpoint}$ as to $\gpoint$-points and $\rpoint$-points, respectively.
\end{definition}

\begin{definition}[$\gpoint$-arcs and $\rpoint$-arcs; Definition 1.8 in \cite{amiot2019complete}]\label{DMarkedPointArcs}
A \emph{$\gpoint$-arc} is smooth map $\gamma$ from $(0,1)$ to $S \, \backslash \, P$ such that its endpoints, $\lim_{x \to 0} \gamma(x)$ and $\lim_{x \to 1} \gamma(x)$, lie in $M_{\gpoint} \cup P_{\gpoint}$ and that $\gamma$ is not contractible to an element in $M_{\gpoint} \cup P_{\gpoint}$. The notion of a \emph{$\rpoint$-arcs} is defined similarly, with $\rpoint$-marked points and $\rpoint$-punctures as their endpoints.
\end{definition}

\begin{definition}[Admissible dissection; Definition 1.9 in \cite{amiot2019complete}]\label{DAdmissibleDissection}
 A collection of pairwise non-intersecting and pairwise different \emph{$\gpoint$-arcs} $\{\gamma_1, \dots, \gamma_r\}$ on a marked surface $(S, M, P)$ is \emph{admissible} if its arcs do not enclose a subsurface containing no punctures of $P_{\rpoint}$ and with no boundary segment of $S$ on its boundary. A maximal admissible collection of $\gpoint$-arcs is called an \emph{admissible $\gpoint$-dissection}. These notions are used for $\rpoint$-arcs in an analogous manner.
\end{definition}

\begin{remark}
Throughout this text, the term admissible dissection stands for an admissible $\gpoint$-dissection.
\end{remark}

\begin{proposition}[Properties of admissible dissections; Propositions 1.11 to 1.13 in \cite{amiot2019complete}]\label{PAdmissibleDissectionProperties}
Let $(S, M, P)$ be a marked surface, and let $\Delta$ be an admissible dissection. Then the following statements hold:
\begin{enumerate}[(i)]
    \item The number of $\gpoint$-arcs in $\Delta$ equals $|M_{\gpoint}| + |P| + b + 2g - 2$ arcs, where $g$ is the genus of $S$ and $b$ is the number of connected components of $\partial S$.
    \item The connected components of the complement of $\Delta$ in $S \, \backslash \, P$ are homeomorphic to either an open disk with precisely one $\rpoint$-marked point on its boundary or to an open punctured open disk with no $\rpoint$-marked point on its boundary and precisely one $\rpoint$-puncture in the interior. 
    \item There exists, up to homotopy, a unique $\rpoint$-admissible dissection $\Delta^{\ast}$ such that each $\gpoint$-arc of $\Delta$ intersects exactly one $\rpoint$-arc of $\Delta^{\ast}$.
\end{enumerate}
\end{proposition}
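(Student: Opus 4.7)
The plan is to prove the three statements in the order (ii), (i), (iii), following the approach of~\cite{amiot2019complete} where they appear as three separate propositions.

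First I would establish~(ii) by a surgery argument. Fix an admissible dissection $\Delta$ and let $F$ be a connected component of the complement of $\Delta$ in $(S\cup\partial S)\setminus P_{\rpoint}$. Admissibility of $\Delta$ forbids $F$ from being a closed subsurface disjoint from both $\partial S$ and from $P_{\rpoint}$, while maximality forbids us from drawing inside $F$ any new $\gpoint$-arc whose addition to $\Delta$ would preserve admissibility. These two constraints should force $F$ to be either an open disk with exactly one $\rpoint$-marked point on its boundary or a once-punctured disk whose puncture lies in $P_{\rpoint}$: any face containing two or more $\rpoint$-points, or having non-trivial genus or several boundary components, would admit a further compatible $\gpoint$-arc that either separates two of the $\rpoint$-points or reduces the topology. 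The main obstacle of the argument lies precisely here, in making this case analysis rigorous.

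Given~(ii), I would derive~(i) by an Euler characteristic count. Endow the compact surface $S\cup\partial S$ with a CW structure whose $0$-cells are the marked points and punctures, whose $1$-cells are the boundary segments between consecutive marked points, the arcs of $\Delta$, and one auxiliary edge joining each $\rpoint$-puncture to the boundary of its enclosing face, and whose $2$-cells are the resulting open disks (genuine disks thanks to~(ii) and the auxiliary edges). Using the alternating condition on each boundary component, which implies $|M_{\gpoint}|=|M_{\rpoint}|$, a direct count gives
\[
V-E+F \;=\; |M_{\gpoint}|+|P|-r,
\]
and comparison with $\chi(S\cup\partial S)=2-2g-b$ yields the claimed formula $r=|M_{\gpoint}|+|P|+b+2g-2$.

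For~(iii), I would construct $\Delta^{\ast}$ arc by arc: each $\gpoint$-arc $\gamma\in\Delta$ has two adjacent faces, each containing by~(ii) a unique $\rpoint$-point, so I set $\gamma^{\ast}$ to be an $\rpoint$-arc joining these two $\rpoint$-points, crossing $\gamma$ transversally once and remaining inside the union of the two faces. Since each face is, up to homotopy, a disk, the homotopy class of $\gamma^{\ast}$ is uniquely determined. The resulting collection $\Delta^{\ast}$ is automatically pairwise non-intersecting, and the dual version of the count in~(i) shows that it already realises the maximal cardinality of an admissible $\rpoint$-dissection, hence is admissible. Uniqueness up to homotopy is immediate from the arc-by-arc construction.
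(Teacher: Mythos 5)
This proposition is not proved in the paper at all: it is imported as a package of known results (Propositions 1.11--1.13 of \cite{amiot2019complete}), so your sketch has to be judged against the arguments in that reference. Your overall strategy is the standard one, and your derivation of (i) from (ii) via an Euler characteristic count is a legitimate and arguably cleaner alternative to the inductive cut-along-an-arc argument of \cite{amiot2019complete} (the case-by-case induction that this paper itself alludes to in the remark following Proposition \ref{PDissectionGoodCut}); the count $V-E+F=|M_{\gpoint}|+|P|-r$ is correct once every face is known to be a disk with one $\rpoint$-marked point or a once-punctured disk, using $|M_{\gpoint}|=|M_{\rpoint}|$ and the bijection between faces and $\rpoint$-points.

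However, as written the proposal has genuine gaps. The central one is in (ii), and you flag it yourself: the entire content of that statement is the claim that a complementary component which is not of one of the two allowed types admits a further $\gpoint$-arc compatible with admissibility (separating two $\rpoint$-points, cutting genus or extra boundary components, or absorbing an untouched $\gpoint$-puncture), and this case analysis is asserted rather than carried out; since both (i) and (iii) are made to rest on (ii) in your plan, the argument is incomplete at its base. In (iii) there are two further problems. First, a $\gpoint$-arc may have the \emph{same} face on both of its sides (for instance the two loops dissecting a one-holed torus with one $\gpoint$- and one $\rpoint$-marked point), so ``its two adjacent faces, each containing a unique $\rpoint$-point'' must be replaced by ``the two sides of $\gamma$'', and the dual arc may be a loop based at a single $\rpoint$-point. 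Second, the inference ``realises the maximal cardinality, hence is admissible'' is backwards: admissibility of $\Delta^{\ast}$ (pairwise non-intersection across faces and the non-enclosure condition of Definition \ref{DAdmissibleDissection}) must be checked directly, and only then does the $\rpoint$-version of the count in (i) show that $\Delta^{\ast}$ is maximal, i.e.\ a dissection. Uniqueness up to homotopy is also not quite ``immediate'': one must argue that any competing dual dissection with the stated crossing property has each of its arcs confined to the closure of the one or two faces adjacent to the unique $\gpoint$-arc it crosses, after which homotopy within those disk faces finishes the job.
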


\begin{definition}[Algebra associated to an admissible dissection; Definition 1.2 in \cite{amiot2019complete}]\label{DDissectionAssociatedAlgebra}
Let $(S, M, P)$ be a marked surface, and let $\Delta$ its admissible dissection. We set the algebra associated to the admissible dissection $\Delta$, $A(\Delta)$, to be the quotient of a path algebra of the quiver $Q(\Delta)$ over $k$ by the ideal $I(\Delta)$ defined as follows:
\begin{itemize}
    \item The vertices of $Q(\Delta)$ are in bijection with $\gpoint$-arcs in $\Delta$.
    \item There is an arrow $i \to j$ of $Q(\Delta)$ whenever $\gpoint$-arcs corresponding to $i$ and $j$ meet at the same $\gpoint$-marked point or $\gpoint$-puncture such that $j$ immediately follows $i$ the counter-clockwise order around the $\gpoint$-point (this means that there is no $\gpoint$-arc of $\Delta$ with the $\gpoint$-point as an endpoint between those corresponding to $i$ and $j$).
    \item The ideal $I(\Delta)$ is generated by the following relations: $i \to j$ and $j \to k$ compose to zero if the $\gpoint$-arcs corresponding to $i$ and $j$ meet at the endpoint of the $\gpoint$-arc corresponding to $j$ other than where the $\gpoint$-arcs corresponding $\gpoint$-arcs to $j$ and $k$ meet. 
\end{itemize}
\end{definition}

\begin{theorem}[Correspondence between marked surfaces and gentle algebras; Proposition 1.21 in \cite{opper2018geometric}]\label{TMarkedSurfaceGentle}
There is a bijection $[(S, M, P), \Delta] \to A(\Delta)$ between the set of homotopy classes of marked surfaces $(S, M, P)$ having no $\gpoint$-punctures ($P_{\gpoint} = \emptyset$) with an admissible dissection $\Delta$ to the set of isomorphism classes of gentle algebras over $k$.
\end{theorem}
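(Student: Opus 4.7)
The plan is to establish the bijection by constructing an explicit inverse and verifying that the two constructions are mutually inverse up to the appropriate equivalences. I would proceed in three steps.

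First, I would verify that $A(\Delta)$ is indeed a gentle algebra. The key observation is that around each $\gpoint$-point $x$, the $\gpoint$-arcs of $\Delta$ ending at $x$ are cyclically ordered by the orientation of $S$. Each arc in $\Delta$ has exactly two endpoints, so at each vertex of $Q(\Delta)$ there are at most two incoming and at most two outgoing arrows, giving property (1) of Definition \ref{DGentleAlgebras}. For any arrow $i \to j$ meeting at one endpoint $x$ of the arc $j$, the relations in $I(\Delta)$ force precisely one composition $i \to j \to k$ to be nonzero (namely when the arcs $j$ and $k$ meet at $x$) and at most one composition $i \to j \to k'$ to lie in $I$ (when $j$ and $k'$ meet at the other endpoint of $j$). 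This yields properties (2) and (3), and property (4) is immediate from Definition \ref{DDissectionAssociatedAlgebra}.

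Second, I would construct the inverse map. Given a gentle algebra $\Lambda = kQ/I$, I would employ a ribbon-graph construction using the combinatorics of $Q$ and $I$. The \emph{permitted threads} of $\Lambda$, namely maximal sequences of arrows $a_n \cdots a_1$ no two consecutive of which lie in $I$ and which cannot be extended, correspond to $\rpoint$-marked points on $\partial S$; the \emph{oriented cycles of full relations}, namely cycles $a_m \cdots a_1$ with $a_i a_{i-1} \in I$ for all $i$ mod $m$, correspond to $\rpoint$-punctures in $P_{\rpoint}$. To each permitted thread of length $n$ I would attach a $(2n+2)$-gon with $\gpoint$- and $\rpoint$-marked points alternating on its boundary, and to each cycle of full relations a punctured polygon with an $\rpoint$-puncture inside. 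Gluing these polygons along pairs of $\gpoint$-arcs indexed by the vertices of $Q$, in the unique way compatible with the gentleness conditions, produces an oriented marked surface $(S,M,P)$ together with an admissible dissection $\Delta$, and one verifies that $A(\Delta) \cong \Lambda$ by direct inspection.

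Third, I would check that these two constructions are mutually inverse. Starting from $(S,M,P)$ with $\Delta$, passing to $A(\Delta)$ and then applying the polygon gluing above reproduces the same polygon decomposition provided by Proposition \ref{PAdmissibleDissectionProperties}(ii), hence the same surface up to homeomorphism fixing $\Delta$. Conversely, starting from $\Lambda$ and passing to the surface and back recovers $\Lambda$ by the explicit isomorphism built in the previous step. The main obstacle is the injectivity clause, i.e.\ showing that the homotopy class of $(S,M,P,\Delta)$ is fully recovered from $A(\Delta)$. This reduces to the classical fact that a connected oriented ribbon graph, equipped with its cyclic orderings at vertices, determines a surface with boundary uniquely up to orientation-preserving homeomorphism; the cyclic orderings here are encoded by the arrows of $Q$ at each $\gpoint$-point, while the distinction between boundary components and punctures is encoded by the contrast between permitted threads and oriented cycles of full relations.
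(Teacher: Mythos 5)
Your overall architecture --- checking that $A(\Delta)$ is gentle, building an inverse by gluing polygons indexed by thread data, and invoking ribbon-graph rigidity for uniqueness --- is the standard route behind the cited result (the paper itself gives no proof and simply quotes \cite{opper2018geometric}). However, the inverse construction as you describe it is inconsistent with the conventions of Definition~\ref{DDissectionAssociatedAlgebra}, and this is a genuine gap, not a cosmetic one. With the paper's relations, a composition $i\to j\to k$ is nonzero exactly when both arrows arise at the \emph{same} $\gpoint$-endpoint of the arc $j$; hence a maximal permitted thread is precisely the fan of arcs of $\Delta$ around a single $\gpoint$-point, so permitted threads correspond to $\gpoint$-marked points, not to $\rpoint$-marked points as you claim. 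Dually, walking around a face of the complement of $\Delta$ produces consecutive arrows meeting at alternating endpoints of the middle arc, i.e.\ a maximal \emph{forbidden} thread (maximal path of full relations); since each face carries exactly one $\rpoint$-point by Proposition~\ref{PAdmissibleDissectionProperties}(ii), it is the forbidden threads that correspond to $\rpoint$-points, the cyclic ones to $\rpoint$-punctures (this last identification you do state correctly). Consequently the polygons glued along $\gpoint$-sides must be indexed by forbidden threads: a forbidden thread of length $n$ gives an $(n+2)$-gon with $n+1$ $\gpoint$-sides and one boundary side carrying a single $\rpoint$-point, and a cycle of full relations of length $n$ gives an $n$-gon with a $\rpoint$-puncture inside.

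If instead you attach a $(2n+2)$-gon with alternating $\gpoint$- and $\rpoint$-points to each permitted thread of length $n$ and glue along $\gpoint$-sides, the faces of the resulting arc system contain $n+1$ $\rpoint$-points, so the collection is not an admissible dissection and its algebra is not $\Lambda$. Concretely, for $\Lambda=kA_2$ ($1\to 2$, no relations) your recipe gives a square for the arrow (its two $\gpoint$-sides separated by boundary segments) glued to two $2$-gons for the trivial threads; after gluing, the two arcs have no common endpoint, so the associated algebra has no arrows, whereas the correct model is a disk with three marked points of each colour in which the two arcs share a $\gpoint$-endpoint. (Your prescription would essentially be the construction dual to the paper's if one glued along the $\rpoint$-arcs of $\Delta^{*}$, but then each polygon contains exactly one $\gpoint$-point, so the colours still do not alternate.) Your first and third steps are fine in outline, but the thread-to-point dictionary must be corrected --- including a careful treatment of trivial permitted and forbidden threads so that every vertex of $Q$ labels exactly two $\gpoint$-sides --- before the mutual-inverse verification can go through.
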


\subsubsection{Models of objects in the derived category}
Henceforth, we assume that the marked surface $(S, M, P)$ has no $\gpoint$-punctures, in other words $P_{\gpoint} = \emptyset$.

\begin{definition}[$\gpoint$-infinite arc; after Definition 1.19 in \cite{opper2018geometric}]\label{DInfiniteArc}
 Let $(S, M, P)$ be a marked surface. An \emph{infinite $\gpoint$-arc} is a smooth map $\gamma$ from $(0,1)$ to $S \, \backslash \, P$ such that
\begin{itemize}
\item $\lim_{x \to 0} \gamma(x) \in M_{\gpoint}$ and $\lim_{x \to 1} \gamma(x) \in P_{\rpoint}$ (i.e.\ it goes from a $\gpoint$-marked point to a $\rpoint$-puncture) or
\item $\lim_{x \to 0} \gamma(x) \in P_{\rpoint}$ and $\lim_{x \to 1} \gamma(x) \in P_{\rpoint}$ (i.e.\ it goes from one $\rpoint$-puncture to another).
\end{itemize}
\end{definition}

\begin{remark}
The arcs are called infinite because of the they can be for various purposes more appropriately viewed as not really ending in $\rpoint$-punctures, but rather infinitely wrapping around these $\rpoint$-punctures in the counter-clockwise direction.
\end{remark}

\begin{definition}[Graded arcs and closed curves; Definition 2.4 in \cite{amiot2019complete} and Definition 2.10 in \cite{opper2018geometric}]\label{DGradedCurves}
Let $(S, M, P)$ be a marked surface, and let $\Delta$ its admissible dissection. Suppose that $\gamma$ is a $\gpoint$-arc, $\gpoint$-infinite arc, or closed curve in $S \, \backslash \, P$. We assume that $\gamma$ intersects the arcs of $\Delta^{\ast}$ minimally and transversally. A \emph{grading} $f$ on $\gamma$ is a function $f\colon \gamma \cap \Delta^{\ast} \to \Z$, where $\gamma \cap \Delta^{\ast}$ is the totally (if $\gamma$ is an arc) or cyclically (if $\gamma$ is a closed curve) ordered set of intersections of $\gamma$ with the $\rpoint$-arcs in $\Delta^{\ast}$. The grading $f$ must satisfy the following condition: assume that $q$ succeeds $p$ in $\gamma \cap \Delta^{\ast}$, then $\gamma$ enters a disk enclosed by $\rpoint$-arcs in $\Delta^{\ast}$ and a segment of $\partial S$ at $p$ and leaves it at $q$; there is a single $\gpoint$-point (cf. Proposition \ref{PAdmissibleDissectionProperties}(ii) above for admissible $\rpoint$-dissections) in the disk or on its boundary, and $f(q) = f(p) + 1$ if the $\gpoint$ lies to the left of $\gamma$, or $f(q) = f(p) - 1$ otherwise.
\end{definition}

\begin{remark}
As it is remarked in \cite{opper2018geometric} (Remark 2.11), grading of $\gpoint$-arc, $\gpoint$-infinite arc, or closed curve, if it exists, is fully determined by its value at a single intersection, and $\gpoint$-arc and $\gpoint$-infinite arc can always be equipped with a grading.
\end{remark}

\begin{theorem}[Correspondence between objects of $\Db(A(\Delta)\modl)$ and graded arcs and closed curves; Theorem 2.12 in \cite{opper2018geometric}]\label{TObjectsGradedCurves}
Let $(S, M, P)$ be a marked surface, and let $\Delta$ its admissible dissection. Then the following holds:
\begin{itemize}
    \item Graded $\gpoint$-arcs are in bijection with isomorphism classes of string complexes in $\Db(A(\Delta)\modl)$.
    \item The pairs of graded closed curves together with isomorphism classes of indecomposable finite-dimensional $k[X^{\pm1}]$-modules are in bijection with isomorphism classes of band complexes in $\Db(A(\Delta)\modl)$.
    \item Graded $\gpoint$-infinite arcs are in bijection with isomorphism classes of infinite string complexes in $\Db(A(\Delta)\modl)$.
\end{itemize}
\end{theorem}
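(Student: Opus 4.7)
The plan is to prove the theorem by first establishing a direct passage between the geometric data (graded arcs and graded closed curves up to homotopy) and the combinatorial data (homotopy strings, homotopy bands, and infinite homotopy strings with grading, modulo $\simSt$, $\simBa$, $\simInfSt$), and then invoking Theorem~\ref{TIndecomposablesStringsBands} to translate the combinatorial side into isomorphism classes of indecomposable objects of $\Db(A(\Delta)\modl)$.

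First I would construct the map in one direction. Given a graded $\gpoint$-arc $\gamma$ with grading $f$, I would put it in minimal position transverse to the dual dissection $\Delta^{\ast}$ and list the intersections $p_0, p_1, \dots, p_n$ in order along $\gamma$. Each $p_i$ lies on a unique $\rpoint$-arc of $\Delta^{\ast}$ and hence, by Proposition~\ref{PAdmissibleDissectionProperties}(iii), corresponds to a unique $\gpoint$-arc of $\Delta$, i.e.\ a vertex of $Q(\Delta)$. Between $p_{i-1}$ and $p_i$ the arc $\gamma$ cuts through a polygonal region of $\Delta^{\ast}$ which by Proposition~\ref{PAdmissibleDissectionProperties}(ii) contains exactly one $\gpoint$-point; the sequence of arrows encountered when rotating from the $\gpoint$-arc through $p_{i-1}$ to that through $p_i$ around this $\gpoint$-point determines, by Definition~\ref{DDissectionAssociatedAlgebra}, a non-trivial path in $Q(\Delta)$ not lying in $I(\Delta)$, and whether the $\gpoint$-point lies to the left or right of $\gamma$ dictates whether the corresponding homotopy letter is direct or inverse. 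I would check that the gentleness axioms together with the minimality of intersection force exactly the compatibility conditions (i)--(iii) of Definition~\ref{DHomotopyStrings} between consecutive letters, and that $f$ translates via the sign convention of Definition~\ref{DGradedCurves} into a grading $\mu$ on the resulting homotopy string $w$ as in Definition~\ref{DGrandingsHomotopyBands}. Reversing $\gamma$ yields $\overline{w}$, so the construction descends to $\simSt$.

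For the inverse, given $(w,\mu)$ with $w=w_n\dots w_1$, I would glue $\gamma$ polygon-by-polygon: each $w_i$ specifies a $\gpoint$-point and the two $\gpoint$-arcs of $\Delta$ on which the endpoints of the $i$-th segment lie, and inside that polygon the segment is unique up to homotopy; the compatibility conditions of Definition~\ref{DHomotopyStrings} guarantee the segments glue across shared $\rpoint$-arcs without creating contractible bigons. For graded closed curves the same procedure performed cyclically produces a homotopy band, the four axioms of Definition~\ref{DGrandingsHomotopyBands} corresponding respectively to closing up ($s(w_1)=t(w_n)$), well-definedness of the cyclic grading, primitivity of the curve, and the fact that a non-immersed closed curve must switch sides of rotation at some point along the cycle. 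The remaining datum of a finite-dimensional indecomposable $k[X^{\pm1}]$-module amounts to a pair $(\lambda,n)$ with $\lambda\in k^{\times}$ and $n\ge 1$, which precisely parametrizes the scalar and the dimension of the higher-dimensional band complexes $B_{(w,\mu),\lambda,n}$ of Remark~\ref{RHigherDimensionalBands}. For $\gpoint$-infinite arcs the key observation is that the $\rpoint$-punctures of $(S,M,P)$ correspond bijectively to the oriented cycles of $Q(\Delta)$ with full relations in $I(\Delta)$, traced by encircling the puncture; the infinite spiral of $\gamma$ around such a puncture generates the infinite tail of the corresponding left, right, or two-sided infinite homotopy string as in Definition~\ref{DInfiniteHomotopyStrings}.

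The main obstacle will be verifying that the combinatorial constraints of Definitions~\ref{DHomotopyStrings}, \ref{DGrandingsHomotopyBands}, and \ref{DInfiniteHomotopyStrings} match exactly the geometric constraints of minimality and non-contractibility in both directions, and that the grading bookkeeping (left/right of $\gamma$ versus direct/inverse letter) is consistent. A particularly delicate point is to ensure that no spurious graded arc or homotopy string is produced and that the equivalences $\simSt$, $\simBa$, $\simInfSt$ are matched by the appropriate ambient notions of homotopy and reparametrization; for the band case one must also verify that the parameter $(\lambda,n)$ attached to a closed curve is well-defined only up to the expected symmetries (cyclic rotation of $w$ possibly followed by inversion, which sends $\lambda$ to $\lambda^{\pm 1}$). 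Once these verifications are carried out, Theorem~\ref{TIndecomposablesStringsBands} assembles the three bullets into the claimed bijections with isomorphism classes of indecomposable objects in $\Db(A(\Delta)\modl)$.
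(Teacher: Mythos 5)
Your outline follows essentially the same strategy as the source of this result: the statement is quoted from Theorem~2.12 of \cite{opper2018geometric} and is not reproved in the paper, which only sketches (in the remark following the theorem) exactly the construction you describe — reading off homotopy letters from the successive crossings of $\gamma$ with $\Delta^{\ast}$ inside the polygons of Proposition~\ref{PAdmissibleDissectionProperties}(ii), with direct/inverse determined by the side of the unique $\gpoint$-point, and then appealing to the Bekkert--Merklen classification (Theorem~\ref{TIndecomposablesStringsBands}). So your approach is the intended one; the verifications you flag (matching of $\simSt$, $\simBa$, $\simInfSt$ with homotopy of curves, and the $(\lambda,n)$ bookkeeping for bands) are precisely what the cited proof carries out in detail.
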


\begin{remark}
The proof of \ref{TObjectsGradedCurves} in \cite{opper2018geometric} consists of assigning a homotopy string, homotopy band, or infinite homotopy string to $\gamma$, a $\gpoint$-arc, $\gpoint$-infinite arc, or closed curve, respectively. Vertices of unfolded diagram of the homotopy string, homotopy band, or infinite homotopy string correspond to successive intersections with the dual dissection $\Delta^{\ast}$ (grading may be assigned as in Definition~
\ref{DGradedCurves}). Arrows between the vertices are constructed as follows: if two successive vertices in the unfolded diagram correspond to $p$ and $q$ in $\gamma \cap \Delta^{\ast}$, then $\gamma$ enters a disk $\mcD$ enclosed by $\rpoint$-arcs in $\Delta^{\ast}$ and a segment of $\partial S$ at $p$ and leaves it at $q$. Inside the disk $\mcD$, $\gamma$ crosses $\gpoint$-arcs $\gamma_{n+1}, \dots, \gamma_1$ in that order, whose endpoint is the single $\gpoint$ on the segment of $\partial S$ of the boundary of the disk. Suppose that the single $\gpoint$ lies to the left of $\gamma$ in the disk $\mcD$. By Definition \ref{DDissectionAssociatedAlgebra} above, there are arrows $a_i: \gamma_{i+1} \to \gamma_i$ in $Q(\Delta)$ for $1 \leq i \leq n$ such that $a_n \dots a_i$ is a direct string for $Q(\Delta)$. Therefore, there is a direct string $a_n \dots a_1$ between the vertices corresponding to successive $p, q \in \gamma \cap \Delta^{\ast}$. If the single $\gpoint$ lies to the left of $\gamma$ in the disk $\mcD$, an inverse string that sits between the vertices corresponding to $p, q \in \gamma \cap \Delta^{\ast}$ is obtained similarly.
\end{remark}

\subsubsection{Models of morphisms in the derived category}
In Theorem 3.1 in \cite{opper2018geometric}, a correspondence between basis morphisms in $\Db(A(\Delta)\modl)$ between indecomposable objects, in the sense of \cite{arnesen2016morphisms} (graph maps, quasi-graph maps, singleton single maps, and singleton double maps; cf. Theorem \ref{TBasisMaps}), and \textit{oriented graded intersections} of the corresponding $\gpoint$-arcs, $\gpoint$-infinite arcs, and closed curves is established. For the purposes of this text, we do not need the full detail of the characterization in \cite{opper2018geometric}; it is enough to note that if $(\gamma_1, f_1)$ is an arc, infinite arc or a closed curve, and so is $(\gamma_2, f_2)$, the canonical maps from $P_{(\gamma_1, f_1)}$ to $P_{(\gamma_2, f_2)}$ correspond to intersections and common endpoints of $\gamma_1$ and $\gamma_2$ where their gradings agree locally.

An intersection $p \in \gamma_1 \cap \gamma_2$ or a common endpoint $p$ lies in a disk $\mcD$ enclosed by $\rpoint$-arcs in $\Delta^{\ast}$ and a segment of $\partial S$ or on the segment of $\partial S$ on the boundary of $\mcD$, respectively. Denote $J$ to be the set of $\gamma_1 \cap \Delta^{\ast}$ and $\gamma_2 \cap \Delta^{\ast}$ adjacent to $p$ on $\gamma_1$ and $\gamma_2$, respectively. We say that the gradings $f_1$ and $f_2$ agree locally if there are $q, r \in J$ such that $r$ immediately, among elements of $J$, follows $q$ in the counter-clockwise order around $p$, $q \in \gamma_1 \cap \Delta^{\ast}$, $r \in \gamma_2 \cap \Delta^{\ast}$, and $f_1(q) = f_2(r)$. This is summarised the following figure, which is an adaptation of Figure 9 in Remark 3.8 in \cite{opper2018geometric}:

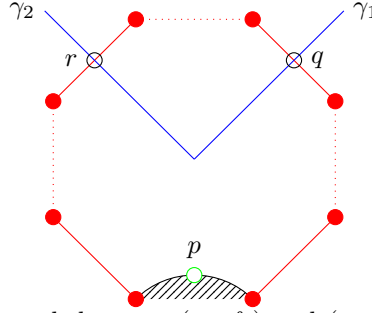
\begin{figure}[H]
    \centering
    
    \begin{tikzpicture}[x = 1cm, y = 1cm]
    
    \node[regular polygon, regular polygon sides = 8, minimum size = 4 cm] (Oct) at (0,0) {};
        
    \foreach \x in {1,...,8}
    {\node[coordinate, name = {OctNode\x}] at (Oct.corner \x) {};}
    
    \draw[red, dotted] (OctNode1) -- (OctNode2);
    \draw[red] (OctNode2) -- (OctNode3) node[coordinate, midway, name = Int2] {};
    \draw[red, dotted] (OctNode3) -- (OctNode4);
    \draw[red] (OctNode4) -- (OctNode5);
    \draw[red] (OctNode6) -- (OctNode7);
    \draw[red, dotted] (OctNode7) -- (OctNode8);
    \draw[red] (OctNode8) -- (OctNode1) node[coordinate, midway, name = Int1] {};
    
    \draw (OctNode5) to[out = 45, in = 135] node[coordinate, midway, name = Green] {} (OctNode6);
    
    \fill[pattern = north east lines] (OctNode5) to[out = 45, in = 135] (OctNode6) -- cycle;
    
    \draw[blue] (0,0) -- ($1.5*(Int1)$) node[at end, right, black] {$\gamma_1$};
    \draw[blue] (0,0) -- ($1.5*(Int2)$) node[at end, left, black] {$\gamma_2$};
    
    \foreach \x in {1,...,8}
    {\draw[red, fill = red] (OctNode\x) circle (0.1);}
    
    \draw[green, fill = white] (Green) circle (0.1);
    
    \draw (Int1) circle (0.1);
    \draw (Int2) circle (0.1);
    
    \node[above = 0.1cm] at (Green) {$p$};
    \node[right = 0.1cm] at (Int1) {$q$};
    \node[left = 0.1cm] at (Int2) {$r$};
    
    \end{tikzpicture}

    \caption{The graded curves $(\gamma_1, f_1)$ and $(\gamma_2, f_2)$ intersect at a point which lies inside a connected component $\mcD$ of the complement of $\Gamma^*$ in $S \backslash P$, a disk, or they meet at the unique $\gpoint$-point $p$ which lies on its boundary. Their intersections, $r, q$, with $\rpoint$-arcs in $\Gamma^*$ which form the boundary have equal grading, $f_1(q) = f_2(r),$ and $r$ follows $q$ in the counter-clockwise direction as viewed from $p$.}
    
    \label{FGradedIntersection}
    
\end{figure}

There are a two exceptions to this: if $P_{(\gamma, f)}$ is a one-dimensional band complex, the identity on $P_{(\gamma, f)}$ and the map $\xi: P_{(\gamma, f)} \to \tau P_{(\gamma, f)}[1] = P_{(\gamma, f)}[1]$ from the Auslander-Reiten triangle $\tau P_{(\gamma, f)} \to E \to P_{(\gamma, f)} \overset{\xi}{\to} \tau P_{(\gamma, f)}[1]$ are not represented by such an intersection.

Moreover, as an easy corollary formulated in Remark 3.8 in \cite{opper2018geometric}, we obtain that:
\begin{itemize}
	\item If $\gamma_1$, $\gamma_2$ intersect at $p \in S \, \backslash \, P$ and $f_1$ is a grading on $\gamma_1$, then there exists a grading $f_2$ on $\gamma_2$ such that $p$ corresponds to a map from $P_{(\gamma_1, f_1)}$ to $P_{(\gamma_2, f_2)}$ and from $P_{(\gamma_2, f_2)}$ to $P_{(\gamma_1, f_1[1])}$.
	\item If $\gamma_1$, $\gamma_2$ intersect at $p \in M_{\gpoint}$ and $f_1$ is a grading on $\gamma_1$, then $p$ corresponds to a map either from $P_{(\gamma_1, f_1)}$ to $P_{(\gamma_2, f_2)}$ or from $P_{(\gamma_2, f_2)}$ to $P_{(\gamma_1, f_1)}$ for some unique grading $f_2$ on $\gamma_2$. There is a morphism from $P_{(\gamma_1, f_1)}$ to $P_{(\gamma_2, f_2)}$ if $\gamma_2$ follows $\gamma_1$ in the counter-clockwise order around $p$, and vice versa.
\end{itemize}

Calculation mapping cones of the basis maps between the indecomposable objects can also be performed in the geometric model as illustrated in Theorem 4.1 in \cite{opper2018geometric}. This result on mapping cones is a geometric analogue of the characterization of mapping cones of the basis maps as given in \cite{ccanakcci2019mapping} and \cite{ccanakcci2021corrigendum}.

A basis map between two indecomposable objects is represented by an oriented graded intersection of corresponding graded curves in the geometric model; as an object the mapping cone of a basis map is given by resolving the intersection of the corresponding graded curves (the mapping cone may not be indecomposable). Specifically, if the basis map is represented by an oriented graded intersection at a common endpoint, its mapping cone is given by concatenation of the two graded curves (cf. Propositions 1.6, 1.20 and 3.7 in \cite{amiot2019complete}).

\section{Conditions on approximations of projectives}
The purpose of this section is to find necessary conditions for approximations of indecomposable projectives arising from a semiorthogonal decomposition of bounded derived category of $\Lambda$, a gentle algebra. Specifically, given $\langle \mcL, \mcR \rangle$, a semiorthogonal decomposition of $\Db(\Lambda \modl)$, there is a distinguished triangle $R_i \to P(i) \to L_i \to R_i[1]$ with $R_i \in \mcR$ and $L_i \in \mcL$ for every vertex $i$ of the underlying quiver, and we seek to understand what $R_i$ and $L_i$ may look like for all vertices $i$. In this section, we show that there are very restrictive conditions on such approximations, a fact that we exploit in the next section.

First, given $f\colon M \to N$, a graph map, singleton single map or singleton double map between string complexes, we study the situation when there is corresponding map $f'\colon N \to M[1]$ and what type this map is.

Our considerations are motivated by Remark 3.8 in \cite{opper2018geometric}, which implies that such a map $f$ has a corresponding map $f'$ if and only if it is represented by an oriented graded intersection of corresponding $\gpoint$-arcs lies in interior of the associated marked surface. This motivation may also help explain the choice of terminology in the next definition.

\begin{definition}[Boundary graph maps]\label{DBoundaryGrapMaps}
Suppose that $f\colon M \to N$ is graph map (as in Definition \ref{DGraphMaps} above) between string complexes in $\Db(\Lambda\modl)$; we say that it is \emph{boundary graph map} if it arises from a maximal overlap of the unfolded diagrams as follows:
$$\begin{tikzcd}[arrows = {decorate = false, decoration={snake, segment length=2mm, amplitude=0.25mm}}, /tikz/column 1/.append style={anchor=base east}]
M: & \arrow[dash, decorate = true]{l} \bullet \arrow[dash]{r}{m_L} & \bullet \arrow[equal]{d} \arrow[dash]{r}{\ell_p} & \bullet \arrow[equal]{d} \arrow[dash]{r}{\ell_{p-1}} & \bullet \arrow[dash, dotted]{r} & \bullet \arrow[dash]{r}{\ell_2} & \bullet \arrow[equal]{d} \arrow[dash]{r}{\ell_1} & \bullet \arrow[equal]{d}\\
N: & \arrow[dash, decorate = true]{l} \bullet \arrow[dash]{r}{n_L} & \bullet \arrow[dash]{r}{\ell_p} & \bullet \arrow[dash]{r}{\ell_{p-1}} & \bullet \arrow[dash, dotted]{r} & \bullet \arrow[dash]{r}{\ell_2} & \bullet \arrow[dash]{r}{\ell_1} & \bullet
\end{tikzcd}$$
where $m_L$ and $n_L$ satisfy an endpoint condition (LG1) or $m_L, n_L$ satisfy (LG2) and $n_L m_L \neq 0$ if they are both non-zero.
\end{definition}

\begin{remark}
Note that situation that $n_L$ and $m_L$ are both non-zero and satisfy (LG2) with $n_L m_L = 0$ may not happen unless $p = 0$, meaning that the overlap between the unfolded diagram is trivial, formed by a single vertex.
\end{remark}

\begin{lemma}\label{LBoundaryGrapMaps}
Let $f\colon M \to N$ be a graph between string complexes in $\Db(\Lambda\modl)$ given by a maximal overlap of unfolded diagrams from $M$ to $N$ by Definition 3.9 in \cite{arnesen2016morphisms}. The same overlap of unfolded diagrams considered in the other direction, from $N$ to $M$, gives rise to a quasi-graph map $f'\colon N \to M[1]$ if and only if $f$ is not a boundary graph map.
\end{lemma}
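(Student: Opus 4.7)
My plan is to prove the equivalence by a direct case-by-case analysis of the endpoint conditions, exploiting the symmetry between reading the same maximal overlap in both directions. The maximal overlap between the unfolded diagrams of $M$ and $N$ is also maximal when read from $N$ to $M$, and the swap interchanges the pairs $(m_L, n_L)$ and $(m_R, n_R)$ appearing in the endpoint conditions (LG1), (LG2), (RG1), (RG2). The goal is to determine when all six endpoint conditions of Definition~\ref{DQuasiGraphMaps}, together with its additional $p=0$ vanishing conditions, are simultaneously satisfied for $f'$.

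The key observation is that the endpoint conditions are ``orientation-sensitive'' in a predictable way. For (LG2), the original conclusion ``$m_L$ zero or inverse and $n_L$ zero or direct'' becomes, after swapping the roles of $M$ and $N$, the opposite conclusion ``$n_L$ zero or inverse and $m_L$ zero or direct''; these can hold simultaneously only when $m_L = n_L = 0$, i.e., when the overlap reaches the leftmost vertices of both $M$ and $N$. For (LG1), the original factor relation $m_L = f_L n_L$ (or $n_L = m_L f_L$) becomes $n_L = f'_L m_L$ (or $m_L = n_L f'_L$) for the swap; a direct inspection of the possibilities, using the maximality of the overlap to exclude $m_L = n_L$, shows that these are incompatible with the original relation unless the quiver has a loop at the overlap's endpoint vertex, which the gentle axioms rule out in the generic situation. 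The condition (LG$\infty$) is self-dual under swapping, and entirely analogous statements hold on the right.

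For the easy direction (``$f$ boundary graph map $\Rightarrow$ $f'$ is not quasi-graph''), the argument is then short: by Definition~\ref{DBoundaryGrapMaps} the overlap reaches the right endpoints of both $M$ and $N$, so in the swapped configuration the letters in the role of $m_R$ and $n_R$ are both zero, (RG2) is trivially satisfied, and $f'$ fails to be a quasi-graph map. For the converse I would argue contrapositively: if some endpoint condition holds for $f'$, the complementarity analysis above forces the overlap to reach the common endpoints of both strings on the corresponding side, which places $f$ in the boundary graph map configuration (possibly after using the string involution $\overline{(-)}$ to trade a left-boundary for a right-boundary presentation). I expect the main obstacle to be the degenerate case $p = 0$ of a single-vertex overlap: here the quasi-graph map definition invokes the extra vanishing conditions $m_L n_R = 0$ and $m_R n_L = 0$, and one has to check that these are precisely controlled by the ``$n_L m_L \neq 0$ if both non-zero'' clause in Definition~\ref{DBoundaryGrapMaps}, which is what aligns the two definitions in the edge case.
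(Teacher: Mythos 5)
Your generic analysis (the case $p>0$) is essentially the route the paper takes: read the same overlap with the roles of $m_L,n_L$ (and $m_R,n_R$) exchanged and check that no endpoint condition can survive the swap unless the overlap already reaches the ends of both strings on that side. Two small repairs there: your exclusion of (LG1)-against-(LG1) should not appeal to the absence of loops (gentle algebras do admit loops); it follows from path lengths, since $m_L=f_Lf'_Lm_L$ with $f_L,f'_L$ non-stationary is impossible. And you only compare (LG1) with (LG1) and (LG2) with (LG2); the cross comparisons ((LG1) for $f$ against (LG2) for $f'$ and conversely) are easy, via orientations, but they are part of the argument and should be stated.

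The genuine gap is the case $p=0$, which you only flag and whose resolution you misdescribe; this is exactly where the clause ``$n_Lm_L\neq 0$ if both are non-zero'' of Definition~\ref{DBoundaryGrapMaps} does its work, so it cannot be deferred. First, for a single-vertex overlap the reversed overlap can be drawn in two inequivalent ways (one may flip one of the strings), and when $f$ is boundary one must rule out a quasi-graph map for \emph{both} drawings: your easy-direction argument only notes that one drawing satisfies (RG2) because $m_R=n_R=0$, which says nothing about the other drawing -- that one is excluded precisely by the failure of the extra vanishing condition, i.e.\ by $n_Lm_L\neq 0$, not by an endpoint condition. Second, your claim that the extra vanishing conditions of Definition~\ref{DQuasiGraphMaps} are ``precisely controlled'' by the $n_Lm_L\neq 0$ clause is accurate only in the sub-case $m_R=n_R=0$ with (LG2); when the trivial overlap has non-zero letters on both sides (the combinations (LG2)+(RG2), (LG1)+(RG2), (LG1)+(RG1)), $f$ is automatically non-boundary and one must actually exhibit the quasi-graph map $N\to M[1]$: the vanishings needed are the cross products $n_Lm_R=0=n_Rm_L$ (or, in the (LG1)+(RG2) case, the well-definedness of an explicit representing single map, which is how the paper argues), and these come from the gentle relations and the maximality of the overlap, not from the boundary clause. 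Until this case analysis is carried out, the ``if and only if'' is not established.
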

\begin{proof}
At first, suppose that $p > 0$, in other words the maximal overlap between unfolded diagrams of $M$ and $N$ is given by a non trivial homotopy string. By Definition 3.9 in \cite{arnesen2016morphisms}, the overlap of unfolded diagrams of $M$ and $N$ is as follows:
$$\begin{tikzcd}[arrows = {decorate = false, decoration={snake, segment length=2mm, amplitude=0.25mm}}, /tikz/column 1/.append style={anchor=base east}]
M: & \arrow[dash, decorate = true]{l} \bullet \arrow[dash]{r}{m_L} & \bullet \arrow[equal]{d} \arrow[dash]{r}{\ell_p} & \bullet \arrow[equal]{d} \arrow[dash]{r}{\ell_{p-1}} & \bullet \arrow[dash, dotted]{r} & \bullet \arrow[dash]{r}{\ell_2} & \bullet \arrow[equal]{d} \arrow[dash]{r}{\ell_1} & \bullet \arrow[equal]{d} \arrow[dash]{r}{m_R} & \bullet \arrow[dash, decorate = true]{r} & {}\\
N: & \arrow[dash, decorate = true]{l} \bullet \arrow[dash]{r}{n_L} & \bullet \arrow[dash]{r}{\ell_p} & \bullet \arrow[dash]{r}{\ell_{p-1}} & \bullet \arrow[dash, dotted]{r} & \bullet \arrow[dash]{r}{\ell_2} & \bullet \arrow[dash]{r}{\ell_1} & \bullet \arrow[dash]{r}{n_R} & \bullet \arrow[dash, decorate = true]{r} & {}
\end{tikzcd}$$
such that and one of (LG1), (LG2) and one of (RG1), (RG2) endpoint conditions are met. Suppose $f$ is non-boundary, so $m_L \neq n_L$ and $m_R \neq n_R$. We show that if we consider this overlap of unfolded diagrams from $N$ to $M$ instead, none of the endpoint conditions hold.
    
Suppose that (LG1) holds for the overlap from $M$ to $N$; arrows $m_L$ and $n_L$ are either both direct or inverse, and there exists a non-stationary path $p_L$ such that $m_L = p_L n_L$ if they are direct and $m_L p_L = n_L$ if they are inverse. This means that $n_L$ is a sub-path of $m_L$ if arrows $m_L$ and $n_L$ are direct and $m_L$ is a sub-path of $n_L$ if arrows $m_L$ and $n_L$ are inverse.
    
If (LG2) holds for the overlap from $M$ to $N$, then arrows $m_L$ and $n_L$ are neither both direct nor inverse, and $m_L$ being non-zero implies that it is inverse and $n_L$ being non-zero implies that it is direct.
    
Therefore, if (LG1) holds for the overlap from $M$ to $N$, it cannot hold for the overlap from $N$ to $N$ as this would imply that $n_L = m_L$, and (LG2) cannot hold here either since the arrows $n_L$ are $m_L$ are equally oriented; on the other hand, should (LG2) hold for the overlap from $M$ to $N$, (LG1) cannot hold due to different orientation of $n_L$ and $m_L$ and neither can (LG2), which would imply that $n_L = m_L = 0$.
    
The right endpoint conditions are discussed dually. Since none of the endpoint conditions are met for the overlap from $N$ to $M$, it yields a quasi-graph map from $N$ to $M[1]$ by Definition \ref{DQuasiGraphMaps} above.

Should $f$ be boundary with $m_R = n_R = 0$, the overlap considered from $N$ to $M$ does not yield a quasi-graph map since it satisfies (RG2).

Suppose next that $p=0$, i.e.\ the overlap is trivial.
The case analysis is more complicated then since the overlap from $N$ to $M$ can be depicted in two different ways and we must check whether \emph{either of them} yields a quasi-graph map:
$$\begin{tikzcd}[arrows = {decorate = false, decoration={snake, segment length=2mm, amplitude=0.25mm}}, /tikz/column 1/.append style={anchor=base east}]
N: & \arrow[dash, decorate = true]{l} \bullet \arrow[dash]{r}{n_L} & \bullet \arrow[equal]{d} \arrow[dash]{r}{n_R} & \bullet \arrow[dash, decorate = true]{r} & {} &
N: & \arrow[dash, decorate = true]{l} \bullet \arrow[dash]{r}{n_L} & \bullet \arrow[equal]{d} \arrow[dash]{r}{n_R} & \bullet \arrow[dash, decorate = true]{r} & {} &
\\
M: & \arrow[dash, decorate = true]{l} \bullet \arrow[dash]{r}{m_L} & \bullet \arrow[dash]{r}{m_R} & \bullet \arrow[dash, decorate = true]{r} & {} &
M: & \arrow[dash, decorate = true]{l} \bullet \arrow[dash]{r}{m_R} & \bullet \arrow[dash]{r}{m_L} & \bullet \arrow[dash, decorate = true]{r} & {}
\end{tikzcd}$$

Should $m_R=0=n_R$ and (LG1) hold, it is easy to see that none of these yields a quasi-graph map, as the following diagrams show that when both $m_L$ and $n_L$ are direct, then (RG2) holds (the case with inverse homotopy letters is similar):
$$\begin{tikzcd}[arrows = {decorate = false, decoration={snake, segment length=2mm, amplitude=0.25mm}}, /tikz/column 1/.append style={anchor=base east}]
N: & \arrow[dash, decorate = true]{l} \bullet \arrow{r}{n_L} & \bullet \arrow[equal]{d} & {} &
N: & \arrow[dash, decorate = true]{l} \bullet \arrow{r}{n_L} & \bullet \arrow[equal]{d} & & {} &
\\
M: & \arrow[dash, decorate = true]{l} \bullet \arrow{r}{m_L} & \bullet & {} &
M: & & \bullet \arrow[<-]{r}{m_L} & \bullet \arrow[dash, decorate = true]{r} & {}
\end{tikzcd}$$

Similarly, the situation where $m_R=0=n_R$ and (LG2) hold with both $m_L$ and $n_L$ non-zero leads to unfolded diagrams:
$$\begin{tikzcd}[arrows = {decorate = false, decoration={snake, segment length=2mm, amplitude=0.25mm}}, /tikz/column 1/.append style={anchor=base east}]
N: & \arrow[dash, decorate = true]{l} \bullet \arrow{r}{n_L} & \bullet \arrow[equal]{d} & {} &
N: & \arrow[dash, decorate = true]{l} \bullet \arrow{r}{n_L} & \bullet \arrow[equal]{d} & & {} &
\\
M: & \arrow[dash, decorate = true]{l} \bullet \arrow[<-]{r}{m_L} & \bullet & {} &
M: & & \bullet \arrow{r}{m_L} & \bullet \arrow[dash, decorate = true]{r} & {}
\end{tikzcd}$$
The first of these never defines a quasi-graph map as (RG2) is satisfied, and the other defines a quasi-graph map precisely when $n_Lm_L=0$, that is when $f$ is not a boundary map. If one of $m_L$ and $n_L$ or both of them vanish, the arguments are similar.

Suppose finally that $p=0$ and one of $m_L$ and $n_L$ is non-zero, as is one of $m_R$ and $n_R$.
If (LG2) and (RG2) hold simultaneously for an overlap which gives the graph map, we assume without loss of generality that $n_L m_L \neq 0$ or one of them is zero and $n_R m_R \neq 0$ or one of them is zero (cf. Definition 5.1 in \cite{ccanakcci2019mapping}). This can be always arranged for by inverting one of the unfolded diagrams. 
This implies that $n_Lm_R=0=n_Rm_L$ and the diagram define a quasi-graph map $N\to M[1]$.

Suppose we are in the situation where (LG1) and (RG2) simultaneously hold. If both $m_R$ and $n_R$ are non-zero, we have the following diagram
$$\begin{tikzcd}[arrows = {decorate = false, decoration={snake, segment length=2mm, amplitude=0.25mm}}, /tikz/column 1/.append style={anchor=base east}]
M: & \arrow[dash, decorate = true]{l} \bullet \arrow[dash]{r} & \bullet \arrow[dotted]{d}[']{p_L} \arrow{r}{m_L} & \bullet \arrow[equal]{d} \arrow{r}{m_R} & \bullet \ar[dash]{r} & \bullet \arrow[dash, decorate = true]{r} & {} \\
N: & \arrow[dash, decorate = true]{l} \bullet \arrow[dash]{r} & \bullet \arrow{r}{n_L} & \bullet \ar[<-]{r}{n_R} & \bullet \arrow[dash]{r} & \bullet \arrow[dash, decorate = true]{r} & {}
\end{tikzcd}$$
Then we can build the following well-defined single map from $N$ to $M[1]$ representing a quasi-graph map: 
$$\begin{tikzcd}[arrows = {decorate = false, decoration={snake, segment length=2mm, amplitude=0.25mm}}, /tikz/column 1/.append style={anchor=base east}]
N: & \arrow[dash, decorate = true]{l} \bullet \arrow[dash]{r} & \bullet \arrow{d}{n_L} \arrow{r}{n_L} & \bullet \arrow[dash]{r} & \bullet \arrow[dash, decorate = true]{r} & {} \\
M[1]: & \arrow[dash, decorate = true]{l} \bullet \arrow{r}{m_L} & \bullet \arrow[dash]{r} & \bullet \arrow[dash]{r} & \bullet \arrow[dash, decorate = true]{r} & {}
\end{tikzcd}$$
In fact, the overlap viewed from $N$ to $M$ violates (LG1) as well as (RG2) as long as at least one of $m_R$ and $n_R$ is non-zero. Hence, the quasi-graph map can be defined in the same way also in this case.

Finally, analogous arguments easily take care also of the case where $p=0$ and (LG1) and (RG1) are simultaneously satisfied for the overlap from $M$ to $N$.
\end{proof}

\begin{lemma}\label{LDualMorphisms}
Suppose that $f\colon M \to N$, between $M, N \in \Db(\Lambda\modl)$ string complexes is a singleton single map (as in Definition \ref{DSingletonSingleMaps}) or a singleton double map (as in Definition \ref{DSingletonDoubleMaps}). If $f$ is a singleton single map is of type $(ii)$, then there is a singleton single map $f'\colon N \to M[1]$ of type $(iii)$, and vice versa. If $f$ is a singleton double map, then there is a singleton single map $f'\colon N \to M[1]$ of type $(iv)$, and vice versa.
\end{lemma}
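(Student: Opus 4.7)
The proof will proceed by an explicit case analysis on the singleton type of $f$; the dual morphism $f'\colon N \to M[1]$ is constructed in each case to have a single non-zero component given by right multiplication by a path already singled out in the data of $f$.

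For the first claim, suppose $f\colon M \to N$ is a singleton single map of type (ii) with vertical path $p$ at position $i$ and factorization $m_i = p \cdot p_R$. I would define $f'\colon N \to M[1]$ to have its only non-zero component be right multiplication by the path $p_R$, from the term at position $i$ of $N$ to the term at position $i-1$ of $M[1]$. Since $m_i$ is direct, the grading of $M$ jumps by one from position $i$ to position $i-1$, exactly cancelling the shift in $M[1]$, so the component lies in cohomological degree zero. Up to inverting the homotopy string of $N$ (which is permitted in the classification of singleton types), the local configuration of $f'$ then matches Definition~\ref{DSingletonSingleMaps}(iii) with the roles of ``$p$'' and ``$p_L$'' played by $p_R$ and $p$ respectively; in particular the factorization $m_i = p \cdot p_R$ of the type-(ii) datum supplies the factorization $n_{i+1} = p_L \cdot p$ on the target side required by type (iii). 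The chain-map conditions (L1), (L2), (R1), (R2) for $f'$ are inherited from the corresponding conditions on $f$ together with the homotopy-string axioms at position $i-1$ of $M$, and the singleton clause for $f'$ follows similarly. The reverse direction, producing a type-(ii) dual from a type-(iii) map, is handled symmetrically, with the role of $p_R$ taken over by the ``$p_L$'' of the original type-(iii) factorization.

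For the second claim, suppose $f\colon M \to N$ is a singleton double map with vertical paths $p_L, p_R$ at positions $i, i-1$ and intermediate path $p'$ satisfying $m_i = p_L \cdot p'$ and $n_i = p' \cdot p_R$. I would define $f'\colon N \to M[1]$ to have as its only non-zero component right multiplication by $p'$, from position $i$ of $N$ to position $i-1$ of $M[1]$ (of cohomological degree zero by the same computation as above). The local configuration of $f'$ then displays the factorization $n_i = p' \cdot p_R$ on the right of position $i$ in $N$ (source) and $m_i = p_L \cdot p'$ on the left of position $i-1$ in $M[1]$ (target), which are exactly the two factorizations demanded of a type-(iv) singleton single map. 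The commutativity condition (C) of the double map, $p_L \, n_i = m_i \, p_R$, reduces via these factorizations to the trivial path associativity $p_L\,p'\,p_R = p_L\,p'\,p_R$, introducing no additional constraint; the remaining chain-map conditions and non-stationarity of $p'$ transfer directly. The reverse direction, recovering a singleton double map from a singleton single map of type (iv), is analogous.

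The principal obstacle will be the orientation-by-orientation verification that the singleton clauses of Definitions~\ref{DSingletonSingleMaps} and~\ref{DSingletonDoubleMaps} --- which forbid the paths $p$, $p_L$, $p_R$ from coinciding with initial or terminal sub-paths of the adjacent homotopy letters --- transfer correctly under the duality $f \leftrightarrow f'$. This is mechanical but must be carried out case by case on the direct/inverse status of the letters $m_{i \pm 1}$ and $n_{i \pm 1}$, using the homotopy-string axioms of Definition~\ref{DHomotopyStrings}(2)--(3) at positions $i \pm 1$ to rule out new singleton violations for the dual path.
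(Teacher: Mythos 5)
Your construction is the same as the paper's: for a type $(ii)$ singleton single map with $m_i=p\,p_R$ you take the single component given by $p_R$ into $M[1]$, which lands in configuration $(iii)$ with the roles of ``$p$'' and ``$p_L$'' played by $p_R$ and $p$, and for a singleton double map you take the component given by $p'$, landing in configuration $(iv)$; the degree bookkeeping is correct, and the double-map case really is as routine as you say (all the chain-map conditions there follow from the homotopy-string axioms on $M$ and $N$ together with the fact that $I$ is generated by quadratic monomials).

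There is, however, a genuine gap in the single-map case. You claim the chain-map conditions for $f'$ are \emph{inherited} from those of $f$ plus the string axioms at position $i-1$ of $M$, and you locate the only remaining work in the singleton sub-path clauses. But the condition that actually needs an argument is the one on the $N$-side of $f'$: when $n_i$ is an \emph{inverse} homotopy letter, one must show that $p_R$ composes to zero with $n_i$. Nothing in (L1),(L2),(R1),(R2) for $f$ constrains $n_i$ in the inverse case (only (R2), which concerns $n_i$ direct, does), and the string axioms of $M$ are irrelevant to it, so this is not an inheritance. The paper's proof of Lemma~\ref{LDualMorphisms} handles it as follows: if the composition did not vanish, the initial arrows of $p$ and of $n_i$ would coincide, so by gentleness one of the two is an initial sub-path of the other; the case where $p$ is an initial sub-path of $n_i$ is ruled out by the defining conditions of a singleton single map, while in the case where $n_i$ is a proper initial sub-path of $p$ the complementary factor of $p$ provides an explicit null-homotopy of $f$, contradicting that $f$ is a singleton, i.e.\ homotopically non-trivial, basis map. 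Thus the verification uses gentleness of $\Lambda$ and, crucially, the homotopical non-triviality of $f$ --- an ingredient entirely absent from your sketch, which presents the remaining checks as mechanical consequences of the homotopy-string axioms. (Your appeal to the axiom at position $i-1$ does cover the other nontrivial check, namely that $p\,p_R\neq 0$ and $p\,p_R\,m_{i-1}=0$ force $p_R\,m_{i-1}=0$ by gentleness.)
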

\begin{proof}
Suppose we have a singleton single map $f\colon M \to N$ in configuration $(ii)$ as follows:
$$\begin{tikzcd}[arrows = {decorate = false, decoration={snake, segment length=2mm, amplitude=0.25mm}}, /tikz/column 1/.append style={anchor=base east}]
    M: & \arrow[dash, decorate = true]{l} \bullet \arrow[dash]{r}{m_{i+1}} & \bullet \arrow[]{d}{p} \arrow{r}{m_{i}}[swap]{=pp _R} & \bullet \arrow[dash, decorate = true]{r} & {} \\
    N: & & \bullet \arrow[dash]{r}{n_{i}} & \bullet \arrow[dash, decorate = true]{r} & {}
\end{tikzcd}$$
such that $n_{i+1}$ if inverse and viewed as an path does not start with $p$. We claim that there exists the following single map $f'$ of type $(iii)$:
$$\begin{tikzcd}[arrows = {decorate = false, decoration={snake, segment length=2mm, amplitude=0.25mm}}, /tikz/column 1/.append style={anchor=base east}]
    N: & & \bullet \arrow[]{d}{p_R} \arrow[dash]{r}{n_{i}} & \bullet \arrow[dash, decorate = true]{r} & {}\\
    M[1]: & \arrow[dash, decorate = true]{l} \bullet \arrow{r}{m_{i}}[swap]{=pp _R} & \bullet \arrow[dash]{r}{m_{i-1}} & \bullet \arrow[dash, decorate = true]{r} & {}
\end{tikzcd}$$
Suppose that $n_{i}$ is inverse, it corresponds to a path from its target to its source. Also, $p$ corresponds to such a path, and has the same starting vertex as $n_{i}$. We want to show that $p$ and $n_{i}$ do not start with the same arrow. If they did, due to $\Lambda$ being gentle, one would need to be a sub-path of the other. The situation of $p$ being a not necessarily proper sub-path of $n_{i}$ is excluded by definition. On the other hand, provided that $n_{i}$ is a proper sub-path of $p$, we show that the map $f$ is null-homotopic. Consider the following homotopy with $p'$ a non-stationary path for which $n_i p' = p$ as paths in $Q$:
$$\begin{tikzcd}[arrows = {decorate = false, decoration={snake, segment length=2mm, amplitude=0.25mm}}, /tikz/column 1/.append style={anchor=base east}]
    M: & \arrow[dash, decorate = true]{l} \bullet \arrow[dash]{r}{m_{i+1}} & \bullet \arrow[]{d}{p} \arrow[dotted]{dl}[']{p'} \arrow{r}{m_{i}}[swap]{=pp _R} & \bullet \arrow[dash, decorate = true]{r} & {} \\
    N: & \arrow[dash, decorate = true]{l} \bullet \arrow{r}{n_{i}} & \bullet &
\end{tikzcd}$$
If $m_{i+1}$ is direct, we have that $0 = m_{i+1} m_{i} = m_{i+1} p p_R = m_{i+1} p' n_{i} p_R$, and, given that $p' n_{i} p_R \neq 0$, $m_{i+1} p'$ needs to be zero. So we have a homotopy between $f$ and the zero map.
    
Because $n_{i}$ and $p$ do not start with the same arrow, $p_R n_{i}$ as path in $Q$ needs to lie in $I$, and $n_{i} p_R = 0$ as maps between projectives over $\Lambda$.
    
Now, assume that $m_{i-1}$ is direct; we need to show that $p_R m_{i-1} = 0$. We already have that $p p_R m_{i-1} = 0$. Since $p p_R \neq 0$, $p_R m_{i-1}$ equals zero. The map $f'$ is therefore well-defined. The other direction follows dually.\\\\
Let $f\colon M \to N$ be a singleton double map. Such map gives rise to the following unfolded diagrams:
$$\begin{tikzcd}[arrows = {decorate = false, decoration={snake, segment length=2mm, amplitude=0.25mm}}, /tikz/column 1/.append style={anchor=base east}]
M: & \arrow[dash, decorate = true]{l} \bullet \arrow[dash]{r}{m_{i+1}} & \bullet \arrow[']{d}{p_L} \arrow{r}{m_i}[swap]{=p_L p'} & \bullet \arrow[]{d}{p_R} \arrow[dash]{r}{m_{i-1}} & \bullet \arrow[dash, decorate = true]{r} & {} \\
N: & \arrow[dash, decorate = true]{l} \bullet \arrow[dash]{r}{n_{i+1}} & \bullet \arrow{r}{n_i}[swap]{=p' p_R} & \bullet \arrow[dash]{r}{n_{i-1}} & \bullet \arrow[dash, decorate = true]{r} & {}
\end{tikzcd}$$
The non-stationary path $p'$ gives rises to the following well-defined singleton single map in the configuration $(iv)$:
$$\begin{tikzcd}[arrows = {decorate = false, decoration={snake, segment length=2mm, amplitude=0.25mm}}, /tikz/column 1/.append style={anchor=base east}]
N: & \arrow[dash, decorate = true]{l} \bullet \arrow[dash]{r}{n_{i+1}} & \bullet \arrow[]{d}{p'} \arrow{r}{n_i}[swap]{= p' p_R} & \bullet \arrow[dash, decorate = true]{r} & {}\\
M[1]: & \arrow[dash, decorate = true]{l} \bullet \arrow{r}{m_i}[swap]{= p_L p'} & \bullet \arrow[dash]{r}{m_{i-1}} & \bullet \arrow[dash, decorate = true]{r} & {}
\end{tikzcd}$$
The other direction follows dually.
\end{proof}
Now, in order to establish some constraints on approximations of indecomposable projectives in semiorthogonal decompositions, we study the following problem and its dual variant: given a map $g: N \to P$ with $N$ an indecomposable object in $\Db(\Lambda \modl)$ and $P$ corresponding to shift of an indecomposable projective, what maps $f\colon M \to N$ compose to zero with $g$? It turns out that we may find some suitable maps with the property for all types of indecomposables in $\Db$, which imposes strong restrictions on approximations of indecomposable projectives.

\begin{proposition}\label{PStringsProjectives}
Suppose that $f\colon M \to N$, between $M, N \in \Db(\Lambda\modl)$ string complexes. 
\begin{enumerate}
\item If $f$ is non-zero and homotopically equivalent to a non-singleton single or double map or to a singleton single map in configurations $(iii)$ or $(iv)$ and there is a map $g: N \to P$ such that $P$ is projective and concentrated in single degree $i$, then $fg$ is null-homotopic.
\item Dually, if $f$ is non-zero and homotopically equivalent to a non-singleton single or double map or to a singleton single map in configurations $(ii)$ or $(iv)$ and there is a map $g: P \to M$ such that $P$ is projective and concentrated in single degree $i$, then $gf$ is null-homotopic.
\end{enumerate}
\end{proposition}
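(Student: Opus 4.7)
My plan is to split claim~(1) into two cases according to the type of map that $f$ is homotopy equivalent to; claim~(2) then follows by the dual argument indicated at the end.

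First, if $f$ is a singleton single map of configuration~$(iii)$ or $(iv)$, I would use that singleton basis elements are uniquely determined in their homotopy classes to assume that $f$ is literally such a map. The defining factorization $n_{i+1} = p_L p$ is available, where $p\colon P(v)\to P(w)$ is the unique non-zero matrix entry of $f$. Writing $q$ for the component of $g$ at $w$ (necessarily zero unless $w$ sits in the cohomological degree $i$ of $P$), the chain map condition $g\circ d_N = 0$ applied at the vertex $u$ of $N$ at position $i+1$ yields the identity
\[
   n_{i+1}\,q\;+\;\varepsilon\,\overline{n_{i+2}}\,q'\;=\;0
\]
in $\Lambda$, where the second summand appears only if the vertex at position $i+2$ also sits in degree $i$, which forces $n_{i+2}$ to be an inverse letter. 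When both summands are present, Definition~\ref{DHomotopyStrings}(iii) applied to the consecutive letters $n_{i+1}$ (direct) and $n_{i+2}$ (inverse) ensures that the last arrows of $n_{i+1}$ and $\overline{n_{i+2}}$ arriving at $u$ are distinct, so the two summands are different paths in the path basis of $\Lambda$ and linearly independent; each must vanish separately. Substituting $n_{i+1} = p_L p$ and using the gentleness of $\Lambda$ (the concatenation $p_L p$ is non-zero, so a new relation in $p_L p q$ can only be produced at the junction between $p$ and $q$), one obtains $pq = 0$, and therefore $fg = 0$ as a chain map.

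Next, if $f$ is homotopy equivalent to a non-singleton single or double map, the case when $f$ is null-homotopic is trivial, so assume $[f]$ is non-zero. Then $[f]$ is represented by a quasi-graph map structure in the sense of Proposition~4.8 of \cite{arnesen2016morphisms}, with an underlying overlap of the unfolded diagrams of $M$ and $N$. This homotopy class contains several chain map representatives built from the overlap — the single maps based on each letter $\ell_1,\dots,\ell_p$ plus at most two endpoint double maps — and their distinguished targets in $N$ visit several distinct cohomological degrees, since adjacent overlap positions differ by one in grading. I would pick a representative $f_0$ with $[f_0]=[f]$ whose distinguished targets all lie in degrees different from $i$; for such an $f_0$ the composition $f_0 g$ vanishes as a chain map, and $[fg] = [f_0 g] = 0$ in the homotopy category gives the claim.

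The most delicate point is the linear independence step in the first case, which genuinely depends on the homotopy string restriction on the arrows meeting at $u$; a secondary subtlety is guaranteeing a convenient $f_0$ when the quasi-graph overlap is very short (notably $p=0$ in the notation of Definition~\ref{DQuasiGraphMaps} and only endpoint double maps are available), in which case one must exploit the non-factorization conditions defining non-singleton maps to construct an explicit null-homotopy instead. Claim~(2) is then proved by dualizing throughout: the factorization $m_i = p\, p_R$ attached to singleton single configurations~$(ii)$ and $(iv)$ takes the role that $n_{i+1} = p_L p$ played in claim~(1), and the chain map condition for $g$ is applied at the appropriate neighbouring vertex of $v$ in the unfolded diagram of $M$.
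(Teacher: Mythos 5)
Your treatment of the singleton single map case is essentially the paper's own argument: the factorization $n_{i+1}=p_Lp$ plus the chain-map condition for $g$ gives $p_Lpq=0$, and gentleness then forces $pq=0$; your linear-independence discussion of the possible second summand $\overline{n_{i+2}}\,q'$ (using condition (iii) of Definition~\ref{DHomotopyStrings}) is a legitimate, in fact more careful, justification of the identity the paper simply asserts. The problem is the non-singleton case. Your reduction rests on the claim that among the canonical representatives of the quasi-graph class (the single maps attached to the overlap letters $\ell_1,\dots,\ell_p$ and the at most two endpoint double maps) one can always choose an $f_0$ whose nonzero components avoid cohomological degree $i$. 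That claim is false. The single map attached to an overlap letter $\ell_j$ is concentrated in the lower of the two degrees spanned by $\ell_j$ in the diagram of $M$ (equivalently the higher of the two degrees spanned by its copy in $N$). If the grading along the overlap zigzags between $d$ and $d+1$, then every overlap letter spans $(d,d+1)$ and \emph{all} single-map representatives are concentrated in the same degree $d$; the endpoint double maps, when they exist at all, also have a component there. Taking $i=d$, no representative avoids degree $i$, so the step $[fg]=[f_0g]=0$ is unavailable; the same difficulty already occurs for a one-letter or trivial overlap, where there is essentially only one canonical representative. You flag only the $p=0$ case and defer to an unspecified ``explicit null-homotopy,'' but that construction is exactly the substance of the proof in the cases your degree-avoidance trick cannot reach, and it is not supplied.

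For comparison, the paper argues directly with the given non-singleton single (resp.\ double) map: by Proposition~4.8 (resp.\ Lemma~4.13) of \cite{arnesen2016morphisms} its configuration has either $n_{i+1}=p$ (resp.\ $n_i=p_R$), in which case the chain-map condition on $g$ at the relevant vertex of $N$ forces $pq=0$ (resp.\ $p_Rq=0$) and $fg=0$ on the nose, or $m_i=p$ (resp.\ $m_i=p_L$), in which case the component $q$ of $g$, placed at the adjacent vertex of $M$, is an explicit one-component null-homotopy for $fg$ (one must also check, as the paper does, that the other differential component at that vertex composes to zero with $q$). To repair your proof you would need to carry out this explicit analysis whenever degree $i$ cannot be avoided, i.e.\ essentially redo the paper's case (i) and (ii) computations; the representative-switching idea alone does not close the argument.
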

\begin{proof}
In order to prove this proposition, we consider $M, N$, and $P$ as complexes of projectives and the maps between them as maps of complexes. We discuss three possible cases:
\begin{enumerate}[(i)]
    \item \textit{The map $f\colon M \to N$ is a single map that is not singleton and not null-homotopic.} The discussion in the proof of Proposition 4.8 in \cite{arnesen2016morphisms} yields that there are two possible configurations for such a map in terms of unfolded diagrams of $M$ and $N$:
    $$\begin{tikzcd}[arrows = {decorate = false, decoration={snake, segment length=2mm, amplitude=0.25mm}}, /tikz/column 1/.append style={anchor=base east}]
    M: & \arrow[dash, decorate = true]{l} \bullet & \bullet \arrow[dash]{l}[']{m_{i+1}} \arrow[]{d}{p} \arrow{r}{m_{i} = p} & \bullet \arrow[dash, decorate = true]{r} & {}\\
    N: & \arrow[dash, decorate = true]{l} \bullet \arrow[dash]{r}{n_{i+1}} & \bullet \arrow[dash]{r}{n_{i}} & \bullet \arrow[dash, decorate = true]{r} & {}
    \end{tikzcd}$$
    or
    $$\begin{tikzcd}[arrows = {decorate = false, decoration={snake, segment length=2mm, amplitude=0.25mm}}, /tikz/column 1/.append style={anchor=base east}]
    M: & \arrow[dash, decorate = true]{l} \bullet & \bullet \arrow[dash]{l}[']{m_{i+1}} \arrow[]{d}{p} \arrow[dash]{r}{m_{i}} & \bullet \arrow[dash, decorate = true]{r} & {}\\
    N: & \arrow[dash, decorate = true]{l} \bullet \arrow{r}{n_{i+1} = p} & \bullet \arrow[dash]{r}{n_{i}} & \bullet \arrow[dash, decorate = true]{r} & {}
    \end{tikzcd}$$
    If the target of $p$ is of degree other than $i$, then $fg$ is zero. Assume, therefore, that the target of $P$ is of degree $i$. In terms of the unfolded diagram of $N$, $g$ can be thought of being given by maps of projectives in degree $i$ to the projective at degree $i$ in $P$. For discussing the composition with $g: N \to P$, it suffices to examine composition of $p$ with the map $q$ going from its target projective to the projective at degree $i$ in $P$.
    
    In unfolded diagrams, this gives rise to the following two situations:
    $$\begin{tikzcd}[arrows = {decorate = false, decoration={snake, segment length=2mm, amplitude=0.25mm}}, /tikz/column 1/.append style={anchor=base east}]
    M: & \arrow[dash, decorate = true]{l} \bullet & \bullet \arrow[dash]{l}[']{m_{i+1}} \arrow[]{d}{p} \arrow{r}{m_{i} = p} & \bullet \arrow[dash, decorate = true]{r} & {}\\
    N: & \arrow[dash, decorate = true]{l} \bullet \arrow[dash]{r}{n_{i+1}} & \bullet \arrow[]{d}{q} \arrow[dash]{r}{n_{i}} & \bullet \arrow[dash, decorate = true]{r} & {} \\
    P: & & \bullet &
    \end{tikzcd}$$
    or
    $$\begin{tikzcd}[arrows = {decorate = false, decoration={snake, segment length=2mm, amplitude=0.25mm}}, /tikz/column 1/.append style={anchor=base east}]
    M: & \arrow[dash, decorate = true]{l} \bullet & \bullet \arrow[dash]{l}[']{m_{i+1}} \arrow[]{d}{p} \arrow[dash]{r}{m_{i}} & \bullet \arrow[dash, decorate = true]{r} & {}\\
    N: & \arrow[dash, decorate = true]{l} \bullet \arrow{r}{n_{i+1} = p} & \bullet \arrow[]{d}{q} \arrow[dash]{r}{n_{i}} & \bullet \arrow[dash, decorate = true]{r} & {} \\
    P: & & \bullet &
    \end{tikzcd}$$
    In the second situation, $pq$ needs to zero so that $g$ is a well-defined map of complexes, and, thus, $fg = 0$. In the first situation, provided that $pq \neq 0$, we can construct a homotopy with a single non-zero component showing $fg$ is null-homotopic as follows:
    $$\begin{tikzcd}[arrows = {decorate = false, decoration={snake, segment length=2mm, amplitude=0.25mm}}, /tikz/column 1/.append style={anchor=base east}]
        M: & \arrow[dash, decorate = true]{l} \bullet & \bullet \arrow[dash]{l}[']{m_{i+1}} \arrow[]{d}{pq} \arrow{r}{m_{i} = p} & \bullet \arrow[dotted]{dl}{q} \, \arrow[dash, decorate = true]{r} & {} \\
        P: & & \bullet &
    \end{tikzcd}$$
    Due to the fact that $pq \neq 0$, any potentially inverse $m_{i+2}$ has to compose to zero with $q$, so this is a homotopy with a zero map.
    
    \item \textit{The map $f\colon M \to N$ is a double map that is not singleton and not null-homotopic.} By Lemma 4.13 in \cite{arnesen2016morphisms}, there is only one possible configuration of unfolded diagrams:
    $$\begin{tikzcd}[arrows = {decorate = false, decoration={snake, segment length=2mm, amplitude=0.25mm}}, /tikz/column 1/.append style={anchor=base east}]
        M: & \arrow[dash, decorate = true]{l} \bullet \arrow[dash]{r}{m_{i+1}} & \bullet \arrow[']{d}{p_L} \arrow[]{r}{m_i = p_L} & \bullet \arrow[]{d}{p_R} \arrow[dash]{r}{m_{i-1}} & \bullet \arrow[dash, decorate = true]{r} & {}\\
        N: & \arrow[dash, decorate = true]{l} \bullet \arrow[dash]{r}{n_{i+1}} & \bullet \arrow[]{r}{n_i = p_R} & \bullet \arrow[dash]{r}{n_{i-1}} & \bullet \arrow[dash, decorate = true]{r} & {}
    \end{tikzcd}$$
    We proceed similarly as in the previous case. Target of $p_L$ or $p_R$ needs to be in degree $i$; otherwise, the composition $fg$ is trivial. Suppose that $q$ is the component of $g$ going from target of $p_L$ or $p_R$ to the projective at degree $i$ in $P$.
    
    First, we deal with the situation that $p_L$ is in degree $i$. This yields the following unfolded diagram:
    $$\begin{tikzcd}[arrows = {decorate = false, decoration={snake, segment length=2mm, amplitude=0.25mm}}, /tikz/column 1/.append style={anchor=base east}]
        M: & \arrow[dash, decorate = true]{l} \bullet \arrow[dash]{r}{m_{i+1}} & \bullet \arrow[']{d}{p_L} \arrow[]{r}{m_i = p_L} & \bullet \arrow[]{d}{p_R} \arrow[dash]{r}{m_{i-1}} & \bullet \arrow[dash, decorate = true]{r} & {}\\
        N: & \arrow[dash, decorate = true]{l} \bullet \arrow[dash]{r}{n_{i+1}} & \bullet \arrow[]{d}{q} \arrow[]{r}{n_i = p_R} & \bullet \arrow[dash]{r}{n_{i-1}} & \bullet \arrow[dash, decorate = true]{r} & {}\\
        P: & & \bullet & &
    \end{tikzcd}$$
    As in the previous case, we can exhibit a homotopy between $fg$ and the zero map provided that $p_L q \neq 0$ as follows:
    $$\begin{tikzcd}[arrows = {decorate = false, decoration={snake, segment length=2mm, amplitude=0.25mm}}, /tikz/column 1/.append style={anchor=base east}]
        M: & \arrow[dash, decorate = true]{l} \bullet \arrow[dash]{r}{m_{i+1}} & \bullet \arrow[']{d}{p_L q} \arrow[]{r}{m_i = p_L} & \bullet \arrow[dotted]{dl}{q} \arrow[dash]{r}{m_{i-1}} & \bullet \arrow[dash, decorate = true]{r} & {}\\
        P: & & \bullet & &
    \end{tikzcd}$$
    The other case, in which $p_R$ is degree $i$ is simpler. We have the following situation:
    $$\begin{tikzcd}[arrows = {decorate = false, decoration={snake, segment length=2mm, amplitude=0.25mm}}, /tikz/column 1/.append style={anchor=base east}]
        M: & \arrow[dash, decorate = true]{l} \bullet \arrow[dash]{r}{m_{i+1}} & \bullet \arrow[']{d}{p_L} \arrow[]{r}{m_i = p_L} & \bullet \arrow[]{d}{p_R} \arrow[dash]{r}{m_{i-1}} & \bullet \arrow[dash, decorate = true]{r} & {}\\
        N: & \arrow[dash, decorate = true]{l} \bullet \arrow[dash]{r}{n_{i+1}} & \bullet \arrow[]{r}{n_i = p_R} & \bullet \arrow[]{d}{q} \arrow[dash]{r}{n_{i-1}} & \bullet \arrow[dash, decorate = true]{r} & {}\\
        P: & & & \bullet &
    \end{tikzcd}$$
    In order for $g$ to be a proper map of complexes, $p_R q$ needs to be zero, which is why $fg = 0$.
    
    \item \textit{The map $f\colon M \to N$ is a singleton single map in configuration $(iii)$ or $(iv)$.} The corresponding unfolded diagram looks as follows:
    $$\begin{tikzcd}[arrows = {decorate = false, decoration={snake, segment length=2mm, amplitude=0.25mm}}, /tikz/column 1/.append style={anchor=base east}]
        M: & \arrow[dash, decorate = true]{l} \bullet & \bullet \arrow[dash]{l}[']{m_{i+1}} \arrow[]{d}{p} \arrow[dash]{r}{m_{i}} & \bullet \arrow[dash, decorate = true]{r} & {}\\
        N: & \arrow[dash, decorate = true]{l} \bullet \arrow{r}{n_{i+1} = p_L p} & \bullet \arrow[dash]{r}{n_{i}} & \bullet \arrow[dash, decorate = true]{r} & {}
    \end{tikzcd}$$
    Again, the target of $p$ needs to be equal to the source of $q$; the composition $fg$ is trivial otherwise. Denote $q$ the component of $g$ going from target of $p$ to the projective at degree $i$ in $P$. We have the following situation:
    As $g$ is a map of complexes, we have that $p_L p q = 0$. By Definition \ref{DSingletonSingleMaps} above, $p$, $p_L$ are non-stationary paths in the quiver $Q$ such that no sub-path of $p p_L$ is in the ideal $I$. The map $q$ can be thought of as a linear combination of paths $\sum_j \alpha_j q_j$ from its target to its source.
    
    Recall that $kQ$ has basis of paths in $Q$ as vector space over $k$ and that $I$ is generated as an ideal of $kQ$ by paths of length two as $\Lambda$ is gentle. We observe that $I$ as vector space over $k$ is generated by paths that lie in $I$. Suppose that $\sum_j \alpha_j q'_j \in I$ for paths $q'_j$ in $Q$ and scalars $\alpha_j \in k$. As $I$ is generated by paths in $I$, we can express $\sum_j \alpha_j q'_j$ as a linear combination of paths in $I$. However, paths form a basis $kQ$, so every $q_i$ needs to lie in $I$ already.
    
    The fact that $p_L p q = 0$ means that $\sum_j \alpha_j q_j p p_L \in I$. By our discussion, every $q_j p p_L$ lies in $I$. Since no sub-path of $p p_L$ lies in $I$, a sub-path of $q_j p$ needs to lie in $I$. This yields that $p q_j = 0$ as maps between projectives, which yields that $p q = 0$. Hence, $fg = 0$.
\end{enumerate}

The other part of this proposition follows by dual argument.
\end{proof}

\begin{proposition}\label{PBandsProjectives}
Suppose $N \in \Kb(\Lambda\proj)$ is a band complex. Denote $f\colon N[-1] \to N$ the quasi-graph map corresponding to the identity on $N[-1]$. If there is a map $g$ from $N$ to $P$ or vice versa with $P$ being projective concentrated in a single degree $i$, then $fg$ and $gf[1]$, respectively, is homotopically trivial.
\end{proposition}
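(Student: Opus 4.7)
The strategy is to reduce to Proposition \ref{PStringsProjectives} by exhibiting $f$ (and its shift) as a non-singleton single map at a single letter of the band. Since $N$ is a band complex, its unfolded diagram is periodic and has no endpoints, and the quasi-graph map $f\colon N[-1]\to N$ corresponding to the identity on $N[-1]$ arises from the overlap being the entire infinite periodic unfolded diagram together with vertical identities at every vertex. Because the diagram has no endpoints, none of the endpoint conditions \textbf{(LG1)}, \textbf{(LG2)}, \textbf{(LG$\infty$)}, \textbf{(RG1)}, \textbf{(RG2)}, \textbf{(RG$\infty$)} in Definition \ref{DQuasiGraphMaps} can hold, which confirms that this overlap indeed defines a genuine quasi-graph map.

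By Proposition~4.8 in \cite{arnesen2016morphisms}, $f$ is represented in $\Kb(\Lambda\proj)$ by a non-singleton single map: for any chosen letter $w_j$ of the band $w$, take the single map whose vertical arrow is $p=w_j$ connecting the corresponding vertices of $N[-1]$ and $N$. The resulting unfolded diagram matches the first subcase of case~(i) in the proof of Proposition \ref{PStringsProjectives}, where the vertical path coincides with one of the adjacent horizontal letters ($m_i=p$).

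Given $g\colon N\to P$ with $P$ projective concentrated in cohomological degree $i$, the argument of case~(i) in the proof of Proposition \ref{PStringsProjectives}(1) then applies essentially verbatim: either the target of $p=w_j$ lies in cohomological degree different from $i$, so $fg$ vanishes on the nose, or we construct an explicit chain homotopy with a single nonzero component built from the relevant component of $g$ at degree $i$, showing that $fg$ is null-homotopic. Since this argument only uses local information in the unfolded diagrams and makes no use of the existence of endpoints, it transfers from the string to the band setting without modification. The dual claim, that $gf[1]$ is null-homotopic for $g\colon P\to N$, follows by the same reasoning applied to $f[1]\colon N\to N[1]$ (which is again a non-singleton single map at any letter) combined with case~(i) in the proof of Proposition \ref{PStringsProjectives}(2).

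The main subtlety, and hence the place to be careful, is in identifying the quasi-graph map $f$ with a concrete non-singleton single map at a chosen letter $w_j$ of the band, and in verifying that the chain-homotopy construction of Proposition \ref{PStringsProjectives} is genuinely insensitive to whether the source and target are string or band complexes. Once these local identifications are made, the proof reduces to a direct invocation of the earlier proposition.
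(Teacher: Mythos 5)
Your argument is, in substance, the paper's own \emph{secondary} proof: the paper also notes (for one-dimensional band complexes) that the quasi-graph map $f$ is represented on the chain level by a single map whose vertical arrow is a letter of the band, and then kills the composite with $g$ by a local computation in the unfolded diagram, exactly in the spirit of case (i) of the proof of Proposition \ref{PStringsProjectives}. The paper's \emph{primary} proof, however, is different and much shorter: $f$ is (up to shift) the connecting morphism of the almost split triangle $N \to E \to N \xrightarrow{\xi} N[1]$ (recall $\tau N \cong N$ for band complexes), and since $g$ is neither a split epimorphism nor a split monomorphism ($N$ is a band complex, $P$ a projective stalk complex), $g$ factors through the middle term and hence composes to zero with $\xi$, respectively with $\xi[-1]$. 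That argument avoids unfolded diagrams entirely and applies verbatim to higher-dimensional band complexes, which matters because Theorem \ref{TProjectivesDecomposition} invokes the proposition for arbitrary band summands; your computational route, like the paper's direct proof, really only treats the one-dimensional case where the basis-map machinery of \cite{arnesen2016morphisms} is available.

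Two points in your write-up need tightening. First, your justification that the full periodic self-overlap defines a quasi-graph map ``because there are no endpoints, so no endpoint condition can hold'' is wrong as stated: conditions \textbf{(LG$\infty$)} and \textbf{(RG$\infty$)} \emph{are} satisfied by this overlap (it continues infinitely in both directions with identity vertical maps), and that is precisely what makes the same overlap define the identity graph map; the map $f$ you want is the additional, band-specific homotopy class of the single maps at the individual letters (Definition 3.12 and Proposition 4.8 in \cite{arnesen2016morphisms}, equivalently the Auslander--Reiten description above), so you should appeal to that rather than to the absence of endpoint conditions. Second, you flag but do not carry out the key verification that the null-homotopy of case (i) of Proposition \ref{PStringsProjectives} descends to the band complex; this is true (the homotopy has a single matrix component, the conditions on consecutive letters used there are identical for bands, and the scalar $\lambda$ only rescales that component), but since the proposition you cite is stated only for string complexes, this is exactly the step that must be written out --- or sidestepped altogether by the almost-split-triangle argument.
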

\begin{proof}
This follows directly from the fact that the map $f$ is a part of an almost split triangle and that $g$ is neither a split monomorphism nor a split epimorphism.


If $N$ is a one-dimensional band complex, we can also give a direct proof as follows.
Suppose that a component of the map $g: N \to P$ is given as follows:
$$\begin{tikzcd}[arrows = {decorate = false, decoration={snake, segment length=2mm, amplitude=0.25mm}}, /tikz/column 1/.append style={anchor=base east}]
N: & \arrow[dash, decorate = true]{l} \bullet \arrow[dash]{r}{n_{i+1}} & \bullet \arrow[]{d}{q} \arrow[dash]{r}{n_{i}} & \bullet \arrow[dash, decorate = true]{r} & {} \\
P: & & \bullet &
\end{tikzcd}$$
In this case, $q$ may be any map between the projectives which are its source and target, respectively. Up to inverting the upper string, $n_{i+1}$ is inverse.
Consider the following representative of $f\colon N[-1] \to N$, which is homotopically non-trivial:
$$\begin{tikzcd}[arrows = {decorate = false, decoration={snake, segment length=2mm, amplitude=0.25mm}}, /tikz/column 1/.append style={anchor=base east}]
N[-1]: & \arrow[dash, decorate = true]{l} \bullet & \bullet \arrow{l}[']{n_{i+1}} \arrow{d}{n_{i+1}} \arrow[dash]{r}{n_{i}} & \bullet \arrow[dash, decorate = true]{r} & {} \\
N: & \arrow[dash, decorate = true]{l} \bullet \arrow[dash]{r}{n_{i+2}} & \bullet & \bullet \arrow{l}[']{n_{i+1}} \arrow[dash, decorate = true]{r} & {}
\end{tikzcd}$$
Clearly, $fg = 0$. The other part of this proposition follows by dual argument.
\end{proof}

\begin{proposition}\label{PInfiniteStringsProjectives}
    Suppose $N \in \Db(\Lambda\modl)$ be an infinite string complex, possibly two-sided. If there is a map $g$ from $N$ to $P$ or vice versa with $P$ being concentrated in a single degree $i$, then there exist a map $f\colon N[-j] \to N$ or vice versa such that $fg$ and $gf[j]$, respectively, is homotopically trivial.
\end{proposition}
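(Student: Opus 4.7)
The strategy is to mimic the construction in Proposition \ref{PBandsProjectives}: we exploit an intrinsic periodicity, now of the infinite tail rather than of a band. Let us treat the case $g\colon N\to P$ with $N=P_{(^\infty w,{}^\infty\mu)}$ given by a left-infinite homotopy string (the cases $w^\infty$ and $^\infty w^\infty$ are analogous, and the dual case $g\colon P\to N$ is handled symmetrically). By Definition \ref{DInfiniteHomotopyStrings}, the infinite tail of $^\infty w$ is produced by repetition of a cycle $a_m\dots a_1$ in $Q$ with full relations $a_{r+1}a_r\in I$; every $a_r$ is direct, so the grading $^\infty\mu$ strictly decreases by $1$ per step in the tail and shifts by exactly $m$ over one full period.

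For the construction of $f$, the key observation is that this period-$m$ structure yields, for every $\ell\geq 1$, a graph map $f\colon N[-\ell m]\to N$ in the sense of Definition \ref{DGraphMaps}. Its maximal overlap is the entire left-infinite tail (so the endpoint condition (LG$\infty$) is satisfied), and on the overlap each summand $P(t(w_i))$ in $N[-\ell m]$ is matched by the identity with the summand $P(t(w_{i-\ell m}))=P(t(w_i))$ in $N$. Compatibility with the differential is automatic, as the arrows along the tail repeat with period $m$.

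Given $g\colon N\to P$ with $P$ concentrated in degree $i$, the map $g$ is determined by a single component $g^i\colon N^i\to P$, a finite sum of paths $q_\kappa$ at positions $j_\kappa$ of the unfolded diagram with $\mu(j_\kappa)=i$. Choosing $\ell$ large enough, each shifted position $j_\kappa+\ell m$ lies deep in the tail overlap, so $(fg)^i$ is a sum of the same paths $q_\kappa$ placed at the shifted positions. To show $fg$ is null-homotopic, I would build a chain homotopy $h\colon N[-\ell m]\to P[-1]$ whose only nontrivial component $h^{i+1}\colon N^{i+1-\ell m}\to P$ is constructed from explicit factorizations of the $q_\kappa$'s through cycle letters: the chain-map condition $g^i\circ d^{i-1}_N=0$, specialized to a tail position, forces $w_{j_\kappa+1}\cdot q_\kappa\in I$, and hence (by gentleness and the full relations $a_r a_{r-1}\in I$, with $a_r=w_{j_\kappa+1}$) each $q_\kappa$ begins with $a_{r-1}$, giving $q_\kappa=a_{r-1}q'_\kappa$. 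Placing $q'_\kappa$ as the component of $h^{i+1}$ at the tail position $j_\kappa+\ell m-1$ then yields $(fg)^i=h^{i+1}\circ d^i_{N[-\ell m]}$ by a direct computation.

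The main obstacle is the precise bookkeeping in the last step, which requires tracking signs, arrow directions, and position matchings. By choosing $\ell\gg 0$, however, all the relevant data lives uniformly in the periodic tail, so the verification reduces essentially to the case in which all neighboring letters are cycle arrows $a_r$; this restricts the case analysis to a short list of configurations and closely parallels the arguments used in the proofs of Propositions \ref{PStringsProjectives} and \ref{PBandsProjectives}. The dual assertion, concerning $g\colon P\to N$, is obtained by applying the same construction to the right-infinite tail (invoking (RG$\infty$) in place of (LG$\infty$)), or for a left-infinite tail by dualizing and producing instead a map $f\colon N\to N[-\ell m]$.
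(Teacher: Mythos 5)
Your overall strategy is the same as the paper's (periodicity graph maps $N[-\ell m]\to N$ built from the cycle with full relations, followed by explicit null-homotopies obtained by factoring the components of $g$ through cycle arrows via gentleness), and your treatment of a component of $g$ supported in the periodic tail is essentially the paper's computation. However, the construction of $f$ has a genuine gap. The overlap between $N[-\ell m]$ and $N$ cannot be "the entire tail with identity vertical maps and automatic compatibility": degree-matching forces the overlap to end where the tail of $N$ meets $w_n$, and there a \emph{right} endpoint condition must be verified, not just (LG$\infty$). Both letters adjacent to the overlap on that side ($a_m$ in $N[-\ell m]$, $w_n$ in $N$) are direct, so (RG1) requires a factorization $w_n=a_m w'$ (which does hold, because $a_1w_n\in I$ and $\Lambda$ is gentle), and the resulting graph map necessarily carries a non-identity vertical component $-\cdot w'$ at the target of $w_n$. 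With identities on the matched tail and zero elsewhere, the square at the junction fails to commute (it would force $-\cdot w_n=0$), so the map you describe is not a chain map; this junction analysis is exactly where the paper's proof does its work.

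The omission propagates into your computation of $fg$. Since the identity components of $f$ hit only the tail summands of $N$, whose degrees $d,d-1,d-2,\dots$ are pairwise distinct, at most one component $q_\kappa$ of $g$ survives "at a shifted position"; components of $g$ on the finite part of the string are annihilated, except the one at the target of $w_n$ (degree $d+1$), which reappears twisted by $w'$. Your homotopy recipe (factor $q_\kappa=a_{r-1}q'_\kappa$ using the left-neighbour cycle arrow and place $q'_\kappa$ one step to the right) is correct for the tail case, i.e.\ for $i\le d$, but it does not cover $i=d+1$, where one needs instead that $w'q=0$, deduced from the chain-map relation $w_nq=0$ together with $w_n=a_mw'$ and gentleness. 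Finally, the case $g\colon P\to N$ for a string that is only left infinite is not dual to the previous one: a graph map $N\to N[-\ell m]$ of the kind you propose would require the single arrow $a_m$ to factor as $w_nc$, which is impossible, so that map does not exist. The paper instead reuses the same $f$, shifted to a map $N\to N[bm]$ with $b\gg 0$, and kills the composition with $g$ by the observation that its nonzero components sit in degrees strictly smaller than $i$; some such argument must be supplied rather than an appeal to duality.
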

\begin{proof}
For the purposes of this proof, we consider $N$ and $P$ as complexes of projectives. We assume that the unfolded diagram of $N$ consists of a string $w = \begin{tikzcd} \bullet \arrow{r}{w_n} & \bullet \arrow[dotted, dash]{r} & \bullet \arrow[dash]{r}{w_1} & \bullet \end{tikzcd}$ which is preceded by infinitely many copies of a string $a = \begin{tikzcd} \bullet \arrow{r}{a_m} & \bullet \arrow[dotted, dash]{r} & \bullet \arrow{r}{a_1} & \bullet \end{tikzcd}$ formed from a repetition-free cycle $a_m \dots a_n$ with full relations in the underlying quiver, so it has the following form:
$$\begin{tikzcd}[arrows = {decorate = false, decoration={snake, segment length=2mm, amplitude=0.25mm}}, /tikz/column 1/.append style={anchor=base east}]
N: & \arrow[dash, decorate = true]{l} \bullet \arrow{r}{a_m} & \bullet \arrow[dotted, dash]{r} & \bullet \arrow{r}{a_1} & \bullet \arrow{r}{w_n} & \bullet \arrow[dotted, dash]{r} & \bullet \arrow[dash]{r}{w_1} & \bullet
\end{tikzcd}$$
Note that, by Definition \ref{DInfiniteHomotopyStrings}, the degree of projectives right to the source of $w_n$ is at least equal to $d$, the degree of the source of $w_n$, and that $a_m \dots a_1$ is a repetition-free cyclic path in $Q$ with full relations. We also observe that $w_n = a_m w'$ for a non-trivial path $w'$ as $a_1 w_n = 0$ and $\Lambda$ is gentle.

For any $b \in \mathbb{N}$, we consider the following map from $N[-bm]$ to $N$ denoted $f_b$:
$$\begin{tikzcd}[arrows = {decorate = false, decoration={snake, segment length=2mm, amplitude=0.25mm}}, /tikz/column 1/.append style={anchor=base east}]
N[-bm]: & \arrow[dash, decorate = true]{l} \bullet \arrow[equals]{d} \arrow{r}{a_m} & \bullet \arrow[equals]{d} \arrow[dotted, dash]{r} & \bullet \arrow[equals]{d} \arrow{r}{a_1} & \bullet \arrow[equals]{d} \arrow{r}{a_m} & \bullet \arrow{d}{w'} \arrow{r}{a_1} & \bullet & \arrow[dash, decorate = true]{l} {}\\
N: & \arrow[dash, decorate = true]{l} \bullet \arrow{r}{a_m} & \bullet \arrow[dotted, dash]{r} & \bullet \arrow{r}{a_1} & \bullet \arrow{r}{w_n} & \bullet \arrow[dash]{r}{w_{n-1}} & \bullet \arrow[dash, decorate = true]{r} & {}
\end{tikzcd}$$
This is a graph map as it satisfies endpoint conditions (LG$\infty$) and (RG1) in Definition \ref{DGraphMaps}. The former condition holds automatically; whereas, the latter condition translates to $w_n = a_m w'$, which is established above.

At first, consider a map $g: P \to N$. Because $P$ is in degree $i$, it is possible to find $b \in \mathbb{N}$ large enough that $f\colon N \to N[bm]$ as above is non-zero only in degrees strictly smaller that $i$; therefore, $gf$ equals zero.

Second, we deal with maps of type $g: N \to P$, where $P$ is concentrated in a single degree $i$. Suppose first that $i < d$. Because all letters to the left of the source of $w_1$ of degree $d$ are direct and because all projectives to the right of the source of $w_1$ have degree at least $d$, situation is then the following:
$$\begin{tikzcd}[arrows = {decorate = false, decoration={snake, segment length=2mm, amplitude=0.25mm}}, /tikz/column 1/.append style={anchor=base east}]
N: & \arrow[dash, decorate = true]{l} \bullet \arrow{r}{a_j} & \bullet \arrow[]{d}{q} \arrow{r}{a_{j-1}} & \bullet \arrow[dotted, dash]{r} & \bullet \arrow{r}{a_1} & \bullet \arrow{r}{w_1} & \bullet \arrow[dotted, dash]{r} & \bullet \arrow[dash]{r}{w_n} & \bullet\\
P: & & \bullet & & & & & & &
\end{tikzcd}$$
In order for $g$ to be properly defined, we need to have $q = a_{j-1}q'$ as $a_{j-1}$ is an arrow and $\Lambda$ is gentle. The map $q'$ is a homotopy between $g$ and the zero map.

Suppose that $i > d+1$, now; then the map $f_1$ defined above is non-zero only in degrees at most $d+1$, which means that $f_1 g = 0$.

Now, we discuss the situation of $i = d$. We consider the composition $f_1 g$, and the situation is as follows:
$$\begin{tikzcd}[arrows = {decorate = false, decoration={snake, segment length=2mm, amplitude=0.25mm}}, /tikz/column 1/.append style={anchor=base east}]
N[-m]: & \arrow[dash, decorate = true]{l} \bullet \arrow[equals]{d} \arrow{r}{a_m} & \bullet \arrow[equals]{d} \arrow[dotted, dash]{r} & \bullet \arrow[equals]{d} \arrow{r}{a_1} & \bullet \arrow[equals]{d} \arrow{r}{a_m} & \bullet \arrow{d}{w'} \arrow{r}{a_1} & \bullet \arrow[dash, decorate = true]{r} & {} \\
N: & \arrow[dash, decorate = true]{l} \bullet \arrow{r}{a_m} & \bullet \arrow[dotted, dash]{r} & \bullet \arrow{r}{a_1} & \bullet \arrow[]{d}{q} \arrow{r}{w_n} & \bullet \arrow[dash]{r}{w_{n-1}} & \bullet \arrow[dash, decorate = true]{r} & {} \\
P: & & & & \bullet & & &
\end{tikzcd}$$
Although there may be some other components to $g$ other than $q$, their sources lie to the right from the source of $q$, which renders them irrelevant since they compose to zero with $f_1$. Because $a_1 q$ composes to zero, by a similar argument as above, we conclude that $q = a_m q'$ and that $q'$ is the null homotopy for $f_1 g$.

Finally, suppose that $i = d+1$. The following diagram describes the situation:
$$\begin{tikzcd}[arrows = {decorate = false, decoration={snake, segment length=2mm, amplitude=0.25mm}}, /tikz/column 1/.append style={anchor=base east}]
N[-m]: & \arrow[dash, decorate = true]{l} \bullet \arrow[equals]{d} \arrow{r}{a_m} & \bullet \arrow[equals]{d} \arrow[dotted, dash]{r} & \bullet \arrow[equals]{d} \arrow{r}{a_1} & \bullet \arrow[equals]{d} \arrow{r}{a_m} & \bullet \arrow{d}{w'} \arrow{r}{a_1} & \bullet \arrow[dash, decorate = true]{r} & {} \\
N: & \arrow[dash, decorate = true]{l} \bullet \arrow{r}{a_m} & \bullet \arrow[dotted, dash]{r} & \bullet \arrow{r}{a_1} & \bullet \arrow{r}{w_n} & \bullet \arrow[]{d}{q} \arrow[dash]{r}{w_{n-1}} & \bullet \arrow[dash, decorate = true]{r} & {} \\
P: & & & & & \bullet & &
\end{tikzcd}$$
Similarly as for $i = d$, we need to discuss about components of $g$ other than $q$. We know that $w_n = a_m w'$, and we need to have that $w_n q = 0$ for the map $g$ to be properly defined. The composition $f_1 g$ is therefore trivial. 

The proof for two-sided infinite string complexes goes along the same lines. The arguments in the discussion above are applied for both infinite parts concurrently.
\end{proof}

The following theorem, which gives substantial restrictions on approximations of indecomposable projectives in semiorthogonal decompositions, is the main result of this section:

\begin{theorem}\label{TProjectivesDecomposition}
Let $\langle\mcL, \mcR\rangle$ be a semiorthogonal decomposition of $\Db(\Lambda\modl)$, and denote $P_1, \dots, P_n \in \Db(\Lambda\modl)$ the indecomposable direct summands of $\Lambda$. Moreover, let us have the following approximations:
$$\bigoplus_{t} R(i,t)^{m_R(i,t)} \overset{\lambda(i)}{\longrightarrow} P(i) \overset{\varrho(i)}{\longrightarrow} \bigoplus_{u} L(i,u)^{m_L(i,u)} \overset{\mu(i)}{\longrightarrow} \bigoplus_{t} R(i,t)^{m_R(i,t)}[1]$$
such that $R(i,t) \in \mcR$ for all $t$ and $L(i,u) \in \mcL$ for all $u$ and they are all indecomposable. We claim that:
\begin{enumerate}[(i)]
    \item All $R(i,t)$ and $L(i',u')$ are string complexes.
    \item All $\Hom_{\Db(\Lambda\modl)}(R(i,t)[j], R(i',t'))$, $\Hom_{\Db(\Lambda\modl)}(R(i,t)[j], L(i',u))$, and $\Hom_{\Db(\Lambda\modl)}(L(i,u)[j], L(i',u'))$ have a $k$-linear basis that comprises of boundary graph maps and singleton single maps of type $(i)$.
\end{enumerate}
\end{theorem}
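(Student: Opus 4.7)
The strategy is to exploit the approximation triangle $R\to P(i)\to L\to R[1]$ for each indecomposable projective, together with the semiorthogonality vanishing $\Hom(\mcR,\mcL)=0$, in order to turn the composition-vanishing statements of Propositions~\ref{PBandsProjectives}, \ref{PInfiniteStringsProjectives} and \ref{PStringsProjectives} into vanishing of the morphisms themselves.

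For part (i), I argue by contradiction. Applying $\Hom_{\Db(\Lambda\modl)}(X,-)$ to the triangle for $i$ with any $X\in\mcR$ gives, using semiorthogonality to kill the two adjacent terms involving $L$, an isomorphism $\lambda(i)_\ast\colon\Hom(X,R)\xrightarrow{\sim}\Hom(X,P(i))$; dually, $\Hom_{\Db(\Lambda\modl)}(-,X)$ with $X\in\mcL$ yields $\varrho(i)^\ast\colon\Hom(L,X)\xrightarrow{\sim}\Hom(P(i),X)$. If a summand $R(i,t)$ of $R$ were a band or two-sided infinite string complex, Proposition~\ref{PBandsProjectives} or \ref{PInfiniteStringsProjectives} applied to the component $R(i,t)\to P(i)$ of $\lambda(i)$ would produce a non-zero morphism $f\colon R(i,t)[-j]\to R(i,t)$ whose postcomposition with that component is null-homotopic. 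The first isomorphism then forces the composite $R(i,t)[-j]\to R(i,t)\hookrightarrow R$ to vanish in $\Db$, and projecting back onto $R(i,t)$ gives $f=0$, a contradiction. The analogous argument with $\varrho_u(i)$ and the second isomorphism rules out band and infinite string summands among the $L(i,u)$.

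For part (ii), the case $\Hom(R(i,t)[j], L(i',u))$ is vacuously true since this Hom vanishes by semiorthogonality. For $\Hom(R(i,t)[j], R(i',t'))$, the same Hom-exact-sequence argument applied to the triangle for $i'$ yields an isomorphism $\lambda(i')_\ast\colon\Hom(R(i,t)[j], R(i'))\xrightarrow{\sim}\Hom(R(i,t)[j], P(i'))$; consequently, any basis morphism $f\colon R(i,t)[j]\to R(i',t')$ whose postcomposition with the component $R(i',t')\to P(i')$ of $\lambda(i')$ vanishes in $\Db$ must itself be zero. By Theorem~\ref{TBasisMaps}(ii) the candidate basis elements are graph maps, singleton single maps of types (i)--(iv), singleton double maps, and quasi-graph maps. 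Proposition~\ref{PStringsProjectives}(1) directly kills singleton single maps of types (iii) and (iv), all non-singleton single and double maps, and thereby also quasi-graph maps, since by the remark following Theorem~\ref{TBasisMaps} these are represented at the chain level by the latter. Singleton single maps of type (ii) and singleton double maps produce, via Lemma~\ref{LDualMorphisms}, singleton single maps of types (iii) resp.\ (iv) from $R(i',t')$ to $R(i,t)[j+1]$, which have just been eliminated; non-boundary graph maps give quasi-graph maps $R(i',t')\to R(i,t)[j+1]$ by Lemma~\ref{LBoundaryGrapMaps}, again eliminated. The case $\Hom(L(i,u)[j], L(i',u'))$ is entirely dual, using Proposition~\ref{PStringsProjectives}(2) and the isomorphism $\varrho(i)^\ast$ from the triangle for $i$.

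The case analysis in part (ii) is long but mechanical; the point requiring most care is that a basis element ruled out on one side is used as the ingredient eliminating another basis element via the duality Lemmas~\ref{LDualMorphisms} and \ref{LBoundaryGrapMaps}, so one must verify that the list of forbidden types is closed under these dualizations. It is, which is why the chain of implications terminates and the allowed basis maps are precisely boundary graph maps and singleton single maps of type~(i).
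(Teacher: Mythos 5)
Your proof is correct and follows essentially the same route as the paper: the same composition-vanishing results (Propositions \ref{PStringsProjectives}, \ref{PBandsProjectives}, \ref{PInfiniteStringsProjectives}) and duality lemmas (Lemmas \ref{LBoundaryGrapMaps} and \ref{LDualMorphisms}) drive the same case analysis, and your way of turning the null-homotopic compositions into a contradiction---via the isomorphisms $\Hom(X,\bigoplus_t R(i,t)^{m_R(i,t)})\cong\Hom(X,P(i))$ for $X\in\mcR$ and $\Hom(\bigoplus_u L(i,u)^{m_L(i,u)},X)\cong\Hom(P(i),X)$ for $X\in\mcL$ coming from the approximation triangles---is exactly the equivalent reformulation recorded in the remark immediately after the theorem, whereas the paper's displayed proof instead completes a morphism of triangles to manufacture a non-zero map from $\mcR$ to $\mcL$. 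One small wording point: in part (i) say ``band or infinite string complex (one- or two-sided)'' rather than ``band or two-sided infinite string complex'', since the claim also excludes one-sided infinite string summands and Proposition \ref{PInfiniteStringsProjectives} covers them by the identical argument.
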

\begin{proof}
At first, we show that no band or infinite string complexes may be direct summands of the approximations.

Suppose that $R(i,t)$ is a band complex for some $i$ and $t$. Denote $f\colon R(i,t)[-1] \to R(i,t)$ the quasi-graph map corresponding to the identity on $R(i,t)[-1]$ and $\iota\colon R(i,t) \to \bigoplus_{t} R(i,t)^{m_R(i,t)}$ one of the canonical injections. We have the following diagram:
$$\begin{tikzcd}
R(i,t)[-1] \arrow{d}{f \iota} \arrow{r} & 0 \arrow{d}{0} \arrow{r}{} & R(i,t) \arrow{r}{-\mathrm{id}_{R(i,t)[-1]}[1]} & R(i,t) \arrow{d}{(f\iota)[1]} \\
    \bigoplus_{t} R(i,t)^{m_R(i,t)} \arrow{r}{\lambda(i)} & P_i \arrow{r}{\varrho(i)} & \bigoplus_{u} L(i,u)^{m_L(i,u)} \arrow{r}{\mu(i)} & \bigoplus_{t} R(i,t)^{m_R(i,t)}[1]
\end{tikzcd}$$
The square on the left commutes by Proposition \ref{PBandsProjectives}.
There exists a homotopically non-trivial map $g: R(i,t) \to \bigoplus_{u} L(i,u)^{m_L(i,u)}$ by the axioms of triangulated categories.
This a contradiction as $R(i,t) \in \mcR$, and there are no non-trivial morphisms from $\mcR$ to $\mcL$.

Provided that $L(i,u)$ is a band complex, we denote $f\colon L(i,u)[-1] \to L(i,u)$ the quasi-graph map corresponding to the identity on $L(i,u)[-1]$ and $\pi\colon L(i,u) \to \bigoplus_{u} L(i,u)^{m_L(i,u)}$ one of the canonical projections. We consider the following diagram:
$$\begin{tikzcd}
P_i \arrow{d}{0} \arrow{r}{\varrho(i)} & \bigoplus_{u} L(i,u)^{m_L(i,u)} \arrow{d}{\pi f[1]} \arrow{r}{\mu(i)} & \bigoplus_{t} R(i,t)^{m_R(i,t)}[1] \arrow{r}{\lambda(i)[1]} & P[1] \arrow{d}{0} \\
0 \arrow{r} & L(i,u)[1] \arrow{r}{\mathrm{id}_{L(i,u)[1]}} & L(i,u)[1] \arrow{r} & 0
\end{tikzcd}$$
Again, the square on the left commutes by Proposition \ref{PBandsProjectives}, and there exists a homotopically non-trivial map $g: \bigoplus_{t} R(i,t)^{m_R(i,t)} \to L(i,u)[1]$, which yields a contradiction.

In a similar manner, we discuss that neither any $R(i,t)$ nor any $L(i,u)$ may be an infinite string complex using \ref{PInfiniteStringsProjectives} instead of \ref{PBandsProjectives}. \\\\
Second, we prove that basis maps between indecomposable direct summands of the approximations may only be certain special graph maps and singleton single maps.
By Theorem \ref{TBasisMaps}, the vector space $\Hom_{\Db(\Lambda\modl)}(R(i',t')[j], R(i, t))$ over $k$ has a basis of graph maps, quasi-graph maps, and singleton single and double maps. Suppose there is a map $f\colon R(i',t')[j] \to R(i, t)$ in this basis that is not boundary graph map or a singleton single map of type $(i)$. Without loss of generality, $f$ is either a quasi-graph map or singleton single map of type $(iii)$ or $(iv)$. If $f$ is a graph map that is not boundary, there exists a quasi-graph map $f'\colon R(i, t) \to R(i',t')[j+1]$ by Lemma \ref{LBoundaryGrapMaps}, and we discuss maps from $R(i, t)$ to $R(i',t')[j+1]$ instead. By Lemma \ref{LDualMorphisms}, similar argument works for $f$ a singleton single map or singleton double map.

Denote $\iota$ a canonical inclusion of $R(i,t)$ into $\bigoplus_{t} R(i,t)^{m_R(i,t)}$, and consider the following diagram:
$$\begin{tikzcd}
R(i',t')[j] \arrow{d}{f \iota} \arrow{r} & 0 \arrow{d}{0} \arrow{r}{} & R(i',t')[j+1] \arrow{r}{-\mathrm{id}_{R(i',t')[j]}[1]} & R(i',t')[j+1] \arrow{d}{(f\iota)[1]} \\
    \bigoplus_{t} R(i,t)^{m_R(i,t)} \arrow{r}{\lambda(i)} & P_i \arrow{r}{\varrho(i)} & \bigoplus_{u} L(i,u)^{m_L(i,u)} \arrow{r}{\mu(i)} & \bigoplus_{t} R(i,t)^{m_R(i,t)}[1]
\end{tikzcd}$$
The argument is similar as for the case bands above. The square on the left commutes by Proposition \ref{PBandsProjectives}. There exists a homotopically non-trivial map $g: R(i',t')[j+1] \to \bigoplus_{u} L(i,u)^{m_L(i,u)}$ yielding a contradiction.\\\\
For $\Hom_{\Db(\Lambda\modl)}(R(i',t)[j], L(i,u))$ and $\Hom_{\Db(\Lambda\modl)}(L(i',u')[j], L(i,u))$, the proof unfolds in a similar fashion as above.
\end{proof}

\begin{remark}
Equivalently, the theorem above can be proved using the fact that inclusions of $\mcL$ and $\mcR$ have left and right adjoints, respectively (cf. Proposition \ref{PSemiorthogonalDecomposition}). Suppose that we have an approximation $R \to P \to L \to R[1]$ with $R \in \mcR$ and $L \in \mcL$ for $P \in \Db\Lambda \modl)$. Suppose there is $0\ne f\colon R' \to R$ such that $R' \in \mcR$ and $f$ composes to zero with the approximation morphism $R \to P$. This is a contradiction as $\Hom_{\Db(\Lambda \modl)}(R',P) \cong \Hom_\mcR(R', R)$ via the approximation morphism $R \to P$. This argument can be made dually for $L$.
\end{remark}

\section{Semiorthogonal decompositions in the geometric model}
In this section, we give a one-to-one correspondence of between two-term semiorthogonal decompositions of $\Db(\Lambda \modl)$ and certain ways of cutting the underlying marked surface $(S, M, P)$ of the geometric model of $\Db(\Lambda \modl)$. We then extend this result to semiorthogonal decompositions with more than two terms.

Unless otherwise indicated, we only consider two-term semiorthogonal decompositions in this section.

\subsection{Bipartite admissible dissections}
We begin by translating the necessary conditions posed on a semiorthogonal decomposition $\langle \mcL, \mcR \rangle$ of $\Kb(\Lambda \proj)$ by Theorem \ref{TProjectivesDecomposition} to the geometric model of $\Db(\Lambda \modl)$. 

Taking models of indecomposable direct summands of approximations of indecomposable projectives in $\langle \mcL, \mcR \rangle$, we exhibit a system of $\gpoint-$arcs that may meet only at endpoints and that an a $\gpoint-$arcs from $\mcR$ need to always $\gpoint-$arcs in the counter-clockwise order around a common endpoint.

Furthermore, we prove a statement that can be viewed as a converse to Theorem \ref{TProjectivesDecomposition} by showing that any bipartite admissible dissection gives a semiorthogonal decomposition of $\Kb(\Lambda \proj)$.

\begin{definition}[Bipartite dissection; after Definition 1.9 in \cite{amiot2019complete}]\label{DBipartiteAdmissibleDissection}
 An admissible collection of $\gpoint$-arcs $\Gamma = \{\gamma_1, \dots, \gamma_r\}$ on the marked surface $(S, M, P)$ (see Definition \ref{DAdmissibleDissection} above) together with a function $p: \Gamma \to \{1, 2\}$ is called bipartite if $p(\gamma') \geq p(\gamma)$ for every two arcs $\gamma, \gamma' \in \Gamma$ such that $\gamma$ and $\gamma'$ have the same endpoint $e \in M_{\gpoint} \cup P_{\gpoint}$ and $\gamma'$ follows $\gamma$ in the counter-clockwise order around $e$. A maximal bipartite admissible collection is referred to as bipartite dissection.
\end{definition}

\begin{proposition}\label{PBipartiteDissectionCompletion}
    Let $(\Gamma, p)$ be a bipartite admissible collection of $\gpoint$-arcs on the surface $(S, M, P)$ with $P_{\gpoint}$ empty, then there exists a bipartite admissible dissection $(\Gamma', p')$ such that $\Gamma \subseteq \Gamma'$ and $p'$ extends $p$.
\end{proposition}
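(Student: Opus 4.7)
The plan is to induct on $D - |\Gamma|$, where $D = |M_\gpoint| + |P| + b + 2g - 2$ is the common size of any admissible dissection of $(S,M,P)$ by Proposition~\ref{PAdmissibleDissectionProperties}(i). The base case $|\Gamma|=D$ is immediate, so suppose $|\Gamma|<D$. It then suffices to exhibit a single new $\gpoint$-arc $\gamma\notin\Gamma$ together with a label $p(\gamma)\in\{1,2\}$ such that $\Gamma\cup\{\gamma\}$ is admissible and $(\Gamma\cup\{\gamma\},p)$ is bipartite; the inductive hypothesis applied to the enlarged collection then completes the proof.

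Since $|\Gamma| < D$, by Proposition~\ref{PAdmissibleDissectionProperties}(ii) some connected component $R$ of $(S\setminus P)\setminus\bigcup\Gamma$ fails the dissection condition, in the sense that $R$ is neither a disk with a single $\rpoint$-marked point on its boundary nor a once-punctured disk around a single $\rpoint$-puncture. Standard arguments (cf.~\cite{amiot2019complete}) show that $R$ admits at least one admissible subdivision arc; the remaining issue is whether such an arc can be chosen with a bipartite-compatible label. For this, I attach to each $\gpoint$-marked point $e \in M_\gpoint$ on the boundary of $R$ an allowable label set $L_R(e)\subseteq\{1,2\}$ for any new arc with endpoint $e$ lying inside $R$. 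The key input is that since $P_\gpoint=\emptyset$, every $\gpoint$-endpoint of an arc of $\Gamma$ lies on $\partial S$, so the arcs of $\Gamma$ meeting at $e$ admit a linear counter-clockwise order between the two boundary tangents at $e$ and, by bipartiteness, their labels form a non-decreasing sequence. A direct inspection of the two sides of $R$'s unique sector at $e$ then determines $L_R(e)$: it equals $\{1,2\}$ unless the sector is flanked by label-$1$ data on both sides (giving $\{1\}$) or by label-$2$ data on both sides (giving $\{2\}$). Consequently, a candidate subdivision arc in $R$ between $e_1$ and $e_2$ admits a bipartite-compatible label precisely when $L_R(e_1)\cap L_R(e_2)\neq\emptyset$.

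The main obstacle is then to show that an admissible subdivision arc of $R$ with compatible endpoint label sets always exists. I plan a case analysis on the topological type of $R$: a disk with several $\rpoint$-marked points on its boundary; a disk with $\rpoint$-marked points on the boundary together with $\rpoint$-punctures inside; a disk with only $\rpoint$-punctures; or a region of higher genus or with several boundary components. In each case the boundary of $R$ supplies enough $\gpoint$-marked corners, and topologically nontrivial $R$ additionally supplies distinct homotopy classes of subdivision arcs between any two such corners, so one can always arrange for compatible endpoint sectors. The most delicate sub-case is when $R$ is a topological disk with only two $\gpoint$-marked corners whose allowable sets conflict; here one either rules out such configurations by reading off a global constraint from the bipartite structure of $\Gamma$, or, failing that, adds a suitable loop at one of the corners that subdivides $R$ using only sectors compatible with that corner's forced label.
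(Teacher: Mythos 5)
Your overall strategy (extend $\Gamma$ one arc at a time, with an induction controlled by the fixed size of an admissible dissection, and keep track at each $\gpoint$-endpoint of which labels a new arc could carry) is close in spirit to the paper's argument, which also proceeds by adding one suitably labelled arc at a time to a non-maximal bipartite collection. However, your proposal has a genuine gap exactly at the heart of the proposition: you never actually prove that an admissible subdivision arc with a bipartite-compatible label exists. The claim that ``one can always arrange for compatible endpoint sectors'' is asserted via a planned case analysis, and you explicitly leave the delicate sub-case open, offering only two speculative fixes (``either rules out such configurations \dots or, failing that, adds a suitable loop''), neither of which is justified; in particular, for the loop fix you would still have to check that the loop is admissible in the sense of Definition~\ref{DAdmissibleDissection} (it must not enclose a subsurface with no $\rpoint$-puncture and no boundary segment) and that it genuinely advances the induction. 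Likewise, the case of a component containing a $\rpoint$-puncture together with other $\rpoint$-points is only listed, not treated. Since the entire content of the proposition is precisely this existence statement, the proposal as written does not establish it.

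The paper avoids the label-conflict problem altogether by a judicious choice of the new arc rather than by analyzing allowable label sets at arbitrary corners. In a component of the complement whose boundary contains at least two $\rpoint$-marked points (and hence alternating $\partial S$-segments and chains of arcs of $\Gamma$), it takes the new arc $\gamma'$ to run from a $\gpoint$-marked point adjacent to a $\partial S$-segment to the corresponding corner across one $\rpoint$-marked point, arranged so that \emph{all} arcs of $\Gamma$ at both endpoints follow $\gamma'$ in the counter-clockwise order; then the label $1$ is always compatible, so no conflict can occur. In a component containing a $\rpoint$-puncture together with other $\rpoint$-points, it adds a loop cutting off exactly that puncture and labels it $1$ or $2$ according to whether some label-$2$ arc follows it at its endpoint. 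Your framework could be repaired by incorporating such an explicit choice (e.g.\ always basing the new arc at sectors adjacent to $\partial S$ where one of the two ordering constraints is vacuous), but without it the key step remains unproven.
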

\begin{proof}
Similarly as in Proposition 1.12 in \cite{amiot2019complete}, consider a connected component $\mcP$ of the complement of the $\gpoint$-arcs in $\Gamma$ in $S \, \backslash \, P$. Suppose that there is a $\rpoint$-puncture in $\mcP$ as well some some other $\rpoint$-vertices, $\rpoint$-punctures or $\rpoint$-marked points. In this case it is possible to add a $\gpoint$-arc $\gamma'$ whose two endpoints coincide and which the $\rpoint$-puncture and no other red vertices. If no arc $\gamma \in \Gamma$ with $p(\gamma) = 2$ follows $\gamma'$ in the counter-clock wise order around the sole endpoint of $\gamma'$, $\gamma$ is assigned $1$; otherwise, it is assigned $2$.

Now, suppose that there are two or more $\rpoint$-marked points in $\mcP$, that is no $\rpoint$-punctures. In this case, the boundary of $\mcP$ is made up of segments of $\partial S$ with a $\rpoint$-marked point each and of segments of successive $\gpoint$-arcs in , which alternate with one another. Because there are at least two $\rpoint$-marked points in $\mcP$, there at least two different $\gpoint$-arc segments $\Sigma_1$ and $\Sigma_2$ of the boundary of $\mcP$ separated by a single $\partial S$ segment $T$ with one $\rpoint$-marked point $t$.

Assume that $\Sigma_2$ follows $\Sigma_1$ in the counter-clockwise order around $t$. Specifically, $t$ divides $T$ into two parts $T_i$ whose boundary consists of $t$ and a $\gpoint$ marked point $s_i$ that lies on the boundary of $\Sigma_i$, for $i = 1, 2$, and we assume that $T_2$ follows $T_1$ in the counter-clockwise order around $t$. Moreover, we assume that $\Sigma_i$ are made up of successive $\gpoint$-arcs $\sigma^{(i)}_1, \dots, \sigma^{(i)}_{n_i}$, for $i = 1, 2$, such that one of the endpoints of $\sigma^{(1)}_{n_1}$ is $s_1$ and one  of the endpoints of $\sigma^{(2)}_1$ is $s_2$. Note that one of the endpoints of $\sigma^{(i)}_1$ or $\sigma^{(i)}_{n_i}$ lies on a $\partial S$ segment of the boundary of $\mcP$; therefore, no $\gpoint$-arc $\gamma \in \Gamma$ may lie to the left of $\sigma^{(i)}_1$ or to the right of $\sigma^{(i)}_{n_i}$ in the counter-clockwise order around the endpoint lying on the $\partial S$ segment.

Denote $s'_1$ the $\gpoint$-marked point lying on the $\partial S$ segment of the boundary $\mcP$. Then, it is possible to add a $\gpoint$-arc $\gamma'$ from $s'_1$ to $s_2$ that lies inside $\mcP$ and is assigned $1$. All other $\gpoint$ arcs in $\Gamma$ from $s'_1$ or $s_2$ follow $\gamma'$ in the counter-clockwise orientation, and $\gamma'$ cuts off exactly one $\rpoint$-marked point from $\mcP$; hence we may extend $(\Gamma, p)$ by $\gamma'$ to a larger bipartite admissible collection.

By the inductive procedure described above, it is possible to extend $(\Gamma, p)$ to a bipartite admissible dissection $(\Gamma', p')$.
\end{proof}

\begin{proposition}\label{PBipartiteDissectionDecomposition}
Let $\Lambda$ be a gentle algebra, and let $(S, M, P)$ be its associated marked surface. A bipartite admissible dissection $(\Gamma, p)$ yields a semiorthogonal decomposition $\langle \mcL, \mcR \rangle$ of $\Kb(\Lambda \proj)$ where $\mcL$ is generated by $\gamma \in \Gamma$ such that $p(\gamma) = 1$ and $\mcR$ is generated by $\gamma \in \Gamma$ such that $p(\gamma) = 2$.
\end{proposition}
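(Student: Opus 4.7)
The plan is to verify the three conditions of Definition~\ref{DSemiorthogonalDecomposition}: that $\mcL$ and $\mcR$ jointly generate $\Kb(\Lambda\proj)$, that $\Hom_{\Kb(\Lambda\proj)}(\mcR, \mcL) = 0$, and that $\mcR$ is right admissible. For the last item I will invoke the equivalent condition~(ii) of Proposition~\ref{PSemiorthogonalDecomposition}: producing, for every $D \in \Kb(\Lambda\proj)$, an approximation triangle $R \to D \to L \to R[1]$ with $R \in \mcR$ and $L \in \mcL$.

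Generation is essentially bookkeeping. The dissection $\Gamma$ has the correct cardinality by Proposition~\ref{PAdmissibleDissectionProperties}, and with any choice of gradings the string complexes $P_\gamma$ with $\gamma \in \Gamma$ form a silting set, hence generate $\Kb(\Lambda\proj)$ as a triangulated category, by the results of~\cite{amiot2019complete}. The bipartite condition then handles semi-orthogonality. By Theorem~\ref{TBasisMaps} it suffices to rule out basis morphisms from $P_\gamma$ to $P_{\gamma'}$ in every cohomological degree whenever $p(\gamma) = 2$ and $p(\gamma') = 1$; the geometric model of~\cite{opper2018geometric} realises such basis morphisms as either interior intersections or common endpoints of the underlying $\gpoint$-arcs, and since $\Gamma$ is admissible only common endpoints can occur. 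At such a common endpoint $e$, the direction of a basis morphism is forced to go from $\gamma$ to $\gamma'$ precisely when $\gamma'$ follows $\gamma$ in the counter-clockwise order around $e$, and Definition~\ref{DBipartiteAdmissibleDissection} prohibits exactly this configuration when $p(\gamma) > p(\gamma')$.

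For the approximation triangles, I will introduce the class $\mcD \subseteq \Kb(\Lambda\proj)$ of objects admitting such a triangle, observe it is closed under shifts and contains $\mcL \cup \mcR$, and prove closure under extensions: given a triangle $D_1 \to D \to D_2 \to D_1[1]$ with chosen approximations $R_i \to D_i \to L_i \to R_i[1]$ for $i = 1, 2$, the composite $R_2 \to D_2 \to D_1[1] \to L_1[1]$ lies in $\Hom(\mcR, \mcL[1])$ and vanishes by semi-orthogonality. Thus $R_2 \to D_1[1]$ lifts along $R_1[1] \to D_1[1]$, and an octahedral construction fits $D$ into an approximation triangle whose $R$-term is an extension of $R_1$ by $R_2$ (hence in $\mcR$) and whose $L$-term is an extension of $L_1$ by $L_2$ (hence in $\mcL$). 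Combined with generation, this forces $\mcD = \Kb(\Lambda\proj)$.

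The main obstacle I foresee is twofold. First, in the semi-orthogonality step one must verify that the counter-clockwise directionality at a common endpoint applies uniformly to all four flavours of basis morphism (graph, quasi-graph, singleton single and singleton double), in particular to graph and quasi-graph maps whose maximal overlap terminates at a common endpoint, where the direction is encoded by the endpoint conditions \textbf{(LG1)/(LG2)} and their right-hand analogues. Second, one has to ensure that the extensions produced by the octahedron genuinely land in $\mcR$ and $\mcL$, which is automatic once those subcategories are taken to be the full triangulated subcategories (closed under summands) generated by the respective halves of $\Gamma$, in agreement with Definition~\ref{DSemiorthogonalDecomposition}.
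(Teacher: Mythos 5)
Your proposal is correct in substance, and its semi-orthogonality step is exactly the paper's argument: by Theorem \ref{TBasisMaps} one only needs to exclude basis morphisms between the arc objects, and since the arcs of $\Gamma$ meet only at endpoints, the geometric model forces any such morphism to point counter-clockwise around a shared $\gpoint$-point, which the bipartite condition forbids from part $2$ to part $1$. You diverge on the other two steps. For generation, the paper does not invoke the silting correspondence; it gives a hands-on construction writing an arbitrary $\gpoint$-arc $\sigma$ as an iterated concatenation of arcs of $\Gamma$ sharing successive endpoints and then applies the concatenation-equals-mapping-cone result of \cite{opper2018geometric}; this explicit construction occupies most of the paper's proof and is reused later (Proposition \ref{PBipartiteDissectionTriangles}, Lemma \ref{LCrossingDividingArcs}, Proposition \ref{PDissectionGoodCut}), so your citation of \cite{amiot2019complete} buys brevity here but loses material the paper needs downstream. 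Two caveats to your version: the objects of an admissible dissection are silting only for suitable gradings (arbitrary gradings can create positive-degree self-extensions), though generation is shift-invariant so your conclusion survives; and silting a priori yields thick generation, so in your closure argument for the class $\mcD$ of objects admitting approximation triangles you should also note closure under direct summands (via Hom-vanishing and idempotent splitting in the Krull--Schmidt category $\Kb(\Lambda\proj)$), or use the filtration of objects by additive closures of shifts of a silting object. Finally, your octahedral construction of approximation triangles re-proves the implication (i)$\Rightarrow$(ii) of Proposition \ref{PSemiorthogonalDecomposition}; the paper simply cites that proposition, which is the intended and shorter route.
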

\begin{proof}
To prove that $\langle \mcL, \mcR \rangle$ is semiorthogonal decomposition of $\Kb(\Lambda \proj)$, it suffices to show that that there are no non-zero morphisms from $\mcR$ to $\mcL$ and that $\mcL$ and $\mcR$ generate the entire triangulated category $\Kb(\Lambda \proj)$ by Proposition \ref{PSemiorthogonalDecomposition}. 

The morphisms between from $P_{(\gamma_1, f_1)}$ to $P_{(\gamma_2, f_2)}$ are up to shift given by oriented graded intersections $\gamma_1$ from $\gamma_2$; as $\gpoint$-arcs in $\Gamma$ may intersect only at endpoints, there are non-trivial morphisms from $P_{(\gamma_1, f_1)}$ to $P_{(\gamma_2, f_2)}$ only if $\gamma_2$ follows $\gamma_1$ in the counter-clockwise order around a common endpoint. In such case, it is required that $p(\gamma_2) \geq p(\gamma_1)$. So there are no non-trivial maps from $P_{(\gamma_1, f_1)}$ to $P_{(\gamma_2, f_2)}$ should $p(\gamma_1)$ be greater than $p(\gamma_2)$.

Since there are no non-trivial morphisms from the generators of $\mcR$ to the shifts generators of $\mcL$, a simple induction shows that there no non-trivial morphisms from $\mcR$ to $\mcL$.

The triangulated category $\Kb(\Lambda \proj)$ is generated by all string complexes, so it is enough to prove that they are generated by $\mcL$ and $\mcR$. Consider a $\gpoint$-arc $\sigma$ given that any string complex is of form $P_{(\sigma, h)}$ for some $\gpoint$-arc $\sigma$, and suppose that the arcs in $\Gamma$ and $\sigma$ are in a minimal position.

By Proposition 1.12 in \cite{amiot2019complete} the connected components of the complement of $\Gamma$ in $S \, \backslash \, P$ are homeomorphic either to an open disk with a single $\rpoint$-marked point on the boundary or to an open punctured disk with a $\rpoint$-puncture and no $\rpoint$-marked points on the boundary. The $\gpoint$-arc $\sigma$ crosses components $\mcP_1, \dots, \mcP_n$ of the complement of $\Gamma$ in $S \, \backslash \, P$ in this order. Inductively, we will construct a sequence: $$\gamma_1^{(1)}, \dots, \gamma_{m_1}^{(1)}, \gamma_1^{(2)}, \dots, \gamma_{m_2}^{(2)}, \dots, \gamma_1^{(n)}, \dots, \gamma_{m_n}^{(n)}$$ such that:
\begin{enumerate}[(i)]
    \item one of the endpoints of $\sigma$ is an endpoint of $\gamma_1^{(1)}$,
    \item the other endpoint of $\sigma$ is an endpoint of $\gamma_{m_n}^{(n)}$, and
    \item successive curves in the sequence share an endpoint, and
    \item the iterated concatenation of the arcs in the sequence is homotopic to $\sigma$. 
\end{enumerate}
The subsequence $\gamma_1^{(i)}, \dots, \gamma_{m_i}^{(i)}$ is constructed in a straight-forward way. If the $\rpoint$-point pertaining to the component $\mcP_i$ lies on the boundary, the segment of $\sigma$ lying in $\mcP_i$ cuts in into two components: one containing the $\rpoint$-point pertaining to $\mcP_i$ and the other without a marked point. Denote $\delta_1^{(i)}, \dots, \delta_{m'_i}^{(i)}$ the $\gpoint$-arcs in $\Gamma$ that lie or partially lie on the boundary of the component of $\mcP_i$ cut by $\sigma$ without the $\rpoint$-point pertaining to $\mcP_i$ such that $\delta_j^{(i)}$ and $\delta_{j+1}^{(i)}$ share a common endpoint for $1 \leq j \leq m'-1$ and $\sigma$ meets $\delta_1^{(i)}$ before $\delta_{m'_i}^{(i)}$. Clearly, the segment of $\sigma$ in $\mcP_i$ is homotopic to the iterated concatenation of $\delta_1^{(i)}, \dots, \delta_{m'_i}^{(i)}$.

If the $\rpoint$-point pertaining to the component $\mcP_i$ is a puncture, $\sigma$ may wind around it and intersect itself. $\delta_1^{(i)}, \dots, \delta_{m'_i}^{(i)}$ the $\gpoint$-arcs in $\Gamma$ on the boundary of $\mcP_i$ such that $\sigma$ crosses their dual $\rpoint$-arcs in $\Gamma^*$ within $\mcP_i$ in that order together with the $\gpoint$-arcs in $\Gamma$ that $\sigma$ crosses as it goes through $\mcP_i$, which would form the first and the last element of the sequence, if applicable and not included already.

The sequence is defined so that the segment of $\sigma$ that lies in $\mcP_i$ is homotopic iterated concatenation of $\delta_1^{(i)}, \dots, \delta_{m'_i}^{(i)}$. We use the fact that the segment of $\sigma$ is homotopic to the iterated concatenation of its sub-segments that lie in connected components of the complement of $\Gamma*$ in $S \backslash P$. If the segment $\sigma$ in $\mcP_i$ crosses in the connected component $\mcQ$ of the complement of $\Gamma*$ in $S \backslash P$, it has to cross two adjacent $\rpoint$-arcs on its boundary; otherwise, it would cross a $\gpoint$-arc, which it does not. Therefore, the corresponding $\gpoint$-arcs in $\delta_j^{(i)}, \delta_{j+1}^{(i)} \in \Gamma$ have a common endpoint, and their concatenation is homotopic to the respective sub-segment of $\sigma$ in $\mcQ$. This can be schematically illustrated by the following figure:
\begin{figure}[H]
    \centering
    
    \begin{tikzpicture}[x = 1cm, y = 1cm]
    \pgfmathsetmacro{\length}{15/sqrt(50+10*sqrt(5))}
    
    \node[coordinate] (Red) at (0,-0.5*\length) {};
    \node[coordinate] (Green) at (0,0.5*\length) {};
    
    \draw[dark-green] (Green) -- ++(216:\length) node[coordinate, at end] (GreenLeft) {} node[near start, below, black] {$\delta^{(i)}_j$};
    \draw[dark-green] (Green) -- ++(-36:\length) node[coordinate, at end] (GreenRight) {} node[near start, below, black] {$\delta^{(i)}_{j+1}$};
    
    \draw[red] (Red) -- ++(54:\length) node[coordinate, at end] (RedAuxRight) {};
    \draw[red] (Red) -- ++(126:\length) node[coordinate, at end] (RedAuxLeft) {};
    
    \draw[red, dotted] (RedAuxRight) -- ++(54:0.5*\length) node[near start, below right, black] {$\delta^{(i)*}_{j+1}$};
    \draw[red, dotted] (RedAuxLeft) -- ++(126:0.5*\length) node[near start, below left, black] {$\delta^{(i)*}_j$};
    
    \path (GreenLeft) -- (Red) node[coordinate, midway] (SigmaLeft) {};
    \path (GreenRight) -- (Red) node[coordinate, midway] (SigmaRight) {};
    
    \draw[blue] (SigmaLeft) to[out = 45, in = 135] (SigmaRight);
    \node[blue, below = 0*\length of SigmaLeft] {$\sigma$};
    
    \draw[dark-green, fill = white] (Green) circle (0.1);
    \draw[dark-green, fill = white] (GreenLeft) circle (0.1);
    \draw[dark-green, fill = white] (GreenRight) circle (0.1);
    
    \draw[red, fill = red] (Red) circle (0.1);
    
    \end{tikzpicture}

    \caption{The sub-segment of $\sigma$ in $Q$ and its representation by $\gpoint$-arcs in $\Gamma$}
    
    \label{FBipartiteDissectionDecomposition1}
    
\end{figure}
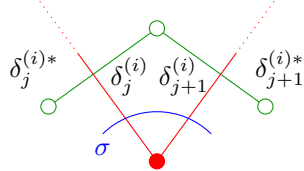
We note that the same principle has been tacitly applied in the case above, when $\mcP_i$ had a $\rpoint$-marked point on the boundary. This principle also underpins the proof of how the indecomposable objects in $\Db(\Lambda \modl)$ are reflected in its geometric model in Theorem 2.12 in \cite{opper2018geometric}.

If $i = 1$, $\delta_1^{(1)}$ shares an endpoint with $\sigma$, we define $\gamma_i^{(1)} = \delta_i^{(1)}$ for $1 \leq i \leq m$ and we set $m = m'-1$ if the $\rpoint$-marked point lies to same side of $\sigma$ in $\mcP_2$ as in $\mcP_2$ and $m = m'$ otherwise, which includes the situation when $i = n$ and $\sigma$ shares an endpoint with $\delta_{m'_1}^{(1)}$. If $i > 1$, $\sigma$ crosses $\delta_1^{(i)}$ as it enters $\mcP_i$, we define $\gamma_j^{(i)} = \delta_{j+1}^{(i)}$ for $1 \leq j \leq m$ and we set $m = m'-1$ if the $\rpoint$-marked point lies to same side of $\sigma$ in $\mcP_{i+1}$ as in $\mcP_i$ and $m = m'$ otherwise, which includes the situation when $i = n > 1$ and $\sigma$ shares an endpoint with $\delta_{m'_i}^{(i)}$.

As above, we note that the last $\gpoint$-arc $\delta^{(i)}_{m'_i}$ is included in the final sequence precisely when $\sigma$ crosses the dual $\rpoint$-arc in $\Gamma^*$ in addition to crossing the $\gpoint$-arc separating $\mcP_{i}$ and $\mcP_{i+1}$.

By definition, our sequence satisfies (i) and (ii). As for (iii), it suffices to check it only for $\gamma_{m_i}^{(i)}$ and $\gamma_{1}^{(i+1)}$. Suppose that $\sigma$ lies to the same side of the $\rpoint$-marked point in both $\mcP_i$ and $\mcP_{i+1}$; schematically, we are in the following situation up to change in orientation of $\sigma$, which is assumed to go from left to right:
\begin{figure}[H]
    \centering
    
    \begin{tikzpicture}[x = 1cm, y = 1cm]
        \def\sides{5}
        \def\pensize{3}
        
        \pgfmathsetmacro{\penfactor}{0.5*cos(180/\sides)}
        \pgfmathsetmacro{\penonecenter}{-\pensize*\penfactor}
        \pgfmathsetmacro{\pentwocenter}{\pensize*\penfactor}
        
        \node[regular polygon, regular polygon sides = 5, minimum size = \pensize cm, shape border rotate = 18] (Pen1) at (\penonecenter,0) {};
        
        \foreach \x in {1,...,\sides}
        {\node[coordinate, name = {Pen1Node\x}] at (Pen1.corner \x) {};}

        \draw[dark-green, dotted] (Pen1Node1) -- (Pen1Node2) node[coordinate, midway, name = {Sigma1}] {};
        \draw (Pen1Node2) to[out = -9, in = 71] node[coordinate, midway, name = {Pol1Red}] {} (Pen1Node3);
        \fill[pattern = north east lines] (Pen1Node2) to[out = -9, in = 71] (Pen1Node3) -- cycle;        \draw[red, fill = red] (Pol1Red) circle (0.1);
        \draw[dark-green, dotted] (Pen1Node3) -- (Pen1Node4);
        \draw[dark-green] (Pen1Node4) -- (Pen1Node5) node[coordinate, midway, name = {Sigma2}] {};
        \draw[dark-green] (Pen1Node5) -- (Pen1Node1) node[midway, below left, black] {$\gamma_{m_i}^{(i)}$};
        
        \node[regular polygon, regular polygon sides = 5, minimum size = \pensize cm, shape border rotate = -18] (Pen2) at (\pentwocenter,0) {};
        
        \foreach \x in {1,...,\sides}
        {\node[coordinate, name = {Pen2Node\x}] at (Pen2.corner \x) {};}
        
        \draw[dark-green] (Pen2Node1) -- (Pen2Node2) node[midway, below right, black] {$\gamma_1^{(i+1)}$};
        \draw[dark-green, dotted] (Pen2Node3) -- (Pen2Node4);
        \draw (Pen2Node4) to[out = 99, in = 189] node[coordinate, midway, name = {Pol2Red}] {} (Pen2Node5);
        \fill[pattern = north west lines] (Pen2Node4) to[out = 99, in = 189] (Pen2Node5) -- cycle;           \draw[red, fill = red] (Pol2Red) circle (0.1);
        \draw[dark-green, dotted] (Pen2Node5) -- (Pen2Node1) node[coordinate, midway, name = {Sigma3}] {};
        
        \foreach \x in {1,...,\sides}
        {\draw[dark-green, fill = white] ({Pen1Node\x}) circle (0.1);}
        
        \foreach \x in {1,4,5}
        {\draw[dark-green, fill = white] ({Pen2Node\x}) circle (0.1);}
        
        \draw[blue] ($1.5*(Sigma1)$) to[out = 315, in = 180] (Sigma2) to[out = 0, in = 225] ($1.5*(Sigma3)$);
        
        \node[blue, below left] (Sigma2) {$\sigma$};
        
    \end{tikzpicture}

    \caption{Constructing $\Gamma$-sequence for $\sigma$ going from $\mcP_i$ to $\mcP_{i+1}$ with respective $\rpoint$-points being to different sides of $\sigma$}
    
    \label{FBipartiteDissectionDecomposition2}
    
\end{figure}
The $\gpoint$-arcs $\gamma_{m_i}^{(i)}$ and $\gamma_1^{(i+1)}$ have a shared endpoint. We know that $\delta_{m'_i}^{(i)} = \delta_1^{(i+1)}$ and that $\delta_{m'_i - 1}^{(i)} = \gamma_{m_i}^{(i)}$ and $\delta_{m'_i}^{(i)}$, and $\delta_1^{(i+1)}$ and $\delta_2^{(i+1)} = \gamma_1^{(i+1)}$ share an endpoint, respectively. However, the shared endpoint has to lie on the opposite side of the intersection of $\delta_{m'_i}^{(i)} = \delta_1^{(i+1)}$ and $\sigma$ as the $\rpoint$-point correspond to the respective component in both cases, so it has be the same endpoint.

Suppose that $\sigma$ lies to the same side of the $\rpoint$-marked point in both $\mcP_i$ and $\mcP_{i+1}$; schematically, we are in the following situation up to change in orientation of $\sigma$, which is assumed to go from left to right:
\begin{figure}[H]
    \centering
    
    \begin{tikzpicture}[x = 1cm,y = 1cm]
        \def\sides{5}
        \def\pensize{3}
        
        \pgfmathsetmacro{\penfactor}{0.5*cos(180/\sides)}
        \pgfmathsetmacro{\penonecenter}{-\pensize*\penfactor}
        \pgfmathsetmacro{\pentwocenter}{\pensize*\penfactor}
        
        \node[regular polygon, regular polygon sides = 5, minimum size = \pensize cm, shape border rotate = 18] (Pen1) at (\penonecenter,0) {};
        
        \foreach \x in {1,...,\sides}
        {\node[coordinate, name = {Pen1Node\x}] at (Pen1.corner \x) {};}

        \draw[dark-green, dotted] (Pen1Node1) -- (Pen1Node2) node[coordinate, midway, name = {Sigma1}] {};
        \draw (Pen1Node2) to[out = -9, in = 71] node[coordinate, midway, name = {Pol1Red}] {} (Pen1Node3);
        \fill[pattern = north east lines] (Pen1Node2) to[out = -9, in = 71] (Pen1Node3) -- cycle;
        \draw[red, fill = red] (Pol1Red) circle (0.1);
        \draw[dark-green, dotted] (Pen1Node3) -- (Pen1Node4);
        \draw[dark-green] (Pen1Node4) -- (Pen1Node5) node[coordinate, midway, name = {Sigma2}] {} node[midway, above right, black] {$\gamma_1^{(i+1)}$};
        \draw[dark-green] (Pen1Node5) -- (Pen1Node1) node[midway, below left, black] {$\gamma_{m_i}^{(i)}$};
        
        \node[regular polygon, regular polygon sides = 5, minimum size = \pensize cm, shape border rotate = -18] (Pen2) at (\pentwocenter,0) {};
        
        \foreach \x in {1,...,\sides}
        {\node[coordinate, name = {Pen2Node\x}] at (Pen2.corner \x) {};}
        
        \draw[dark-green, dotted] (Pen2Node3) -- (Pen2Node4) node[coordinate, midway, name = {Sigma3}] {};
        \draw[dark-green, dotted] (Pen2Node4) -- (Pen2Node5);
        \draw (Pen2Node5) to[out = 171, in = 261] node[coordinate, midway, name = {Pol2Red}] {} (Pen2Node1);
        \fill[pattern = north east lines] (Pen2Node5) to[out = 171, in = 261] (Pen2Node1) -- cycle;
        \draw[red, fill = red] (Pol2Red) circle (0.1);
        \draw[dark-green, dotted] (Pen2Node1) -- (Pen2Node2);
        
        \foreach \x in {1,...,\sides}
        {\draw[dark-green, fill = white] ({Pen1Node\x}) circle (0.1);}
        
        \foreach \x in {1,4,5}
        {\draw[dark-green, fill = white] ({Pen2Node\x}) circle (0.1);}
        
        \draw[blue] ($1.25*(Sigma1)$) -- (Sigma2) to[out = -18, in = 108] ($1.5*(Sigma3)$);
        
        \node[blue, below left] (Sigma2) {$\sigma$};
        
    \end{tikzpicture}

    \caption{Constructing $\Gamma$-sequence for $\sigma$ going from $\mcP_i$ to $\mcP_{i+1}$ with respective $\rpoint$-points being to different sides of $\sigma$}
    
    \label{FBipartiteDissectionDecomposition3}
    
\end{figure}
We can employ a similar argument as above to reason that the $\gpoint$-arcs $\gamma_{m_i}^{(i)} = \delta_{m'_i}^{(i)}$ and $\gamma_1^{(i+1)} = \delta_2^{(i+1)}$ have the same endpoint.

Now, we observe that $\sigma$ is homotopic to the iterated concatenation of $\gamma_1^{(1)}, \dots, \gamma_{m_1}^{(1)},$ $\gamma_1^{(2)}, \dots, \gamma_{m_2}^{(2)}, \dots,$ $\gamma_1^{(n)}, \dots, \gamma_{m_n}^{(n)}$. If we concatenate $\gamma_{m_i}^{(i)}$ and $\gamma_{1}^{(i+1)}$ and denote the resulting arc $\gamma_{i,i+1}$, then the iterated concatenation of $\gamma_{i-1,i},$ $\gamma_2^{(i)}, \dots, \gamma_{m_i - 1}^{(i)},$ and $\gamma_{i,i+1}$ is homotopic to the segment of $\sigma$ in $\mcP_i$. The fact $\sigma$ is homotopic to the iterated concatenation of its segments in $\mcP_1, \dots, \mcP_n$ completes the argument.

By Theorem 4.1 in \cite{opper2018geometric} as recalled above, concatenation of $\gpoint$-arcs $\varrho_1$ and $\varrho_2$ at their common endpoint around which $\varrho_2$ follows $\varrho_1$ in the counter-clockwise order corresponds to forming the mapping cone of the arising morphisms from $P_{(\varrho_1, g_1)}$ to $P_{(\varrho_2, g_2)}$ up to shift (cf. the proof of Proposition 3.7 in \cite{amiot2019complete}). This observation can be used to deduce that $P_{(\sigma, h)}$ lies in the triangulated subcategory of $\Kb(\Lambda \proj)$ generated by $P_{\gamma_j^{(i)}}$.

Let $(\Gamma, p)$ be a bipartite admissible dissection of $(S, M, P)$. We recalled in the proof above that, by Proposition 1.12 in \cite{amiot2019complete}, the connected components of the complement of $\Gamma$ in $S \, \backslash \, P$ are homeomorphic either to an open disk with a single $\rpoint$-marked point on the boundary or to an open punctured disk with a $\rpoint$-puncture and no $\rpoint$-marked points on the boundary.
We consider a component $\mcP$ whose boundary contains arcs $\gamma_1, \gamma_2 \in \Gamma$ such that $p(\gamma_1) \neq p(\gamma_2)$. The component $\mcP$ needs to be of the first type. If its boundary did not contain a segment of $\partial S$, it would consist entirely of arcs in $\Gamma$, and there would invariably be curves $\gamma_3, \gamma_4 \in \Gamma$ in its boundary such that they share an endpoint, $\gamma_4$ follows $\gamma_3$ in the counterclockwise order around the endpoint, but $p(\gamma_4) < p(\gamma_3)$. In the counter-clockwise order, the boundary of $\mcP$ looks as follows: a segment of $\partial S$ adjacent, a sequence of arcs in $\Gamma$ assigned $1$ by $p$, and a sequence of arcs in $\Gamma$ assigned $2$ by $p$.
\end{proof}

It turns out that the procedure of representing a $\gpoint$-arc as an iterated concatenation of $\gpoint$-arcs in $(\Gamma, p)$ employed in the proof of the proposition above can be used to get approximation triangles for string objects in the induced semiorthogonal decomposition $\langle \mcL, \mcR \rangle$ of $\Kb(\Lambda \proj)$. Before that, however, we need to deal with some technicalities.

\begin{lemma}\label{LMappingConeReduction}
Let $A \nameto{f_i} B_i$, $i = 1, 2$, be morphisms in a triangulated category $\mcT$, then $B_1 \oplus \cone((f_1, f_2)^T)$ is isomorphic to $B_1 \oplus \cone(-f_2 \circ a_1[-1])$ such that $a_1$ comes from the distinguished triangle: $$A \nameto{f_1} B_1 \nameto{b_1} \cone(f_1) \nameto{a_1} A[1].$$ Furthermore, $\cone((f_1, f_2)^T)$ is isomorphic to $\cone(-f_2 \circ a_1[-1])$ provided that $\mcT$ is Krull-Schmidt.

Dually, let $A_i \to B$, $i = 1, 2$ be morphisms in a triangulated category $\mcT$, then $A_1[1] \oplus \cone(f_1, f_2))$ is isomorphic to $A_1[1] \oplus \cone(b_1 f_2)$ such that $b_1$ comes from the distinguished triangle: $$A_1 \nameto{f_1} B \nameto{b_1} \cone(f_1) \nameto{a_1} A_1[1].$$ Also, $\cone((f_1, f_2))$ is isomorphic to $\cone(b_1 f_2)$ provided that $\mcT$ is Krull-Schmidt.
\end{lemma}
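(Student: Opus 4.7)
The plan is to establish the first statement via two applications of the octahedral axiom TR4; the dual will follow by an entirely symmetric argument (formally, by passing to $\mcT^{\mathrm{op}}$), and the Krull--Schmidt refinements then reduce to cancelling the common summand $B_1$ (respectively $A_1[1]$).

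For the main argument, I will apply TR4 to the composable pair
$$\cone(f_1)[-1] \xrightarrow{-a_1[-1]} A \xrightarrow{(f_1,f_2)^T} B_1\oplus B_2.$$
The composition equals $i_2\circ g$, where $i_2\colon B_2\to B_1\oplus B_2$ is the canonical inclusion and $g=-f_2\circ a_1[-1]$, because $f_1\circ a_1[-1]=0$ by the rotated triangle of $f_1$; the same rotated triangle identifies $\cone(-a_1[-1])\cong B_1$. To identify $\cone(i_2\circ g)$, I apply TR4 a second time, now to $\cone(f_1)[-1]\xrightarrow{g} B_2 \xrightarrow{i_2} B_1\oplus B_2$. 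Since $i_2$ is a split monomorphism with $\cone(i_2)\cong B_1$ and the split triangle $B_2\to B_1\oplus B_2\to B_1\to B_2[1]$ has trivial connecting morphism, the induced octahedral triangle $\cone(g)\to \cone(i_2\circ g)\to B_1 \to \cone(g)[1]$ splits, giving $\cone(i_2\circ g)\cong B_1\oplus\cone(g)$.

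The first octahedron then yields a distinguished triangle
$$B_1 \longrightarrow B_1\oplus\cone(g) \longrightarrow \cone((f_1,f_2)^T) \xrightarrow{\gamma} B_1[1].$$
The TR4 compatibility diagram forces $\gamma = f_1[1]\circ d$, where $d\colon\cone((f_1,f_2)^T)\to A[1]$ is the structure map of the cone triangle of $(f_1,f_2)^T$. Consecutive morphisms in the latter triangle compose to zero, so $(f_1,f_2)^T[1]\circ d=0$, and reading off the first coordinate gives $f_1[1]\circ d=0$. Hence $\gamma=0$, the triangle splits, and we conclude $B_1\oplus\cone((f_1,f_2)^T)\cong B_1\oplus\cone(g)$. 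The cancellation $\cone((f_1,f_2)^T)\cong\cone(g)$ in the Krull--Schmidt case is then immediate.

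I expect the main technical obstacle to be the careful bookkeeping of signs and the precise identification of the connecting map $\gamma$; different standard formulations of TR4 state the compatibilities slightly differently, so the exact expression for $\gamma$ either needs to be extracted from the commutative octahedral diagram directly or verified after a small sign adjustment in the chosen rotation conventions.
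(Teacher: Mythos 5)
Your proof is correct and follows essentially the same route as the paper: one application of the octahedral axiom to the composite $\cone(f_1)[-1]\xrightarrow{-a_1[-1]}A\xrightarrow{(f_1,f_2)^T}B_1\oplus B_2$, identification of the connecting morphism as $f_1[1]\circ d$, and its vanishing as the first coordinate of $(f_1,f_2)^T[1]\circ d=0$, giving the split triangle and the cancellation in the Krull--Schmidt case. The only cosmetic differences are that the paper identifies $\cone\bigl((0,-f_2\circ a_1[-1])^T\bigr)\cong B_1\oplus\cone(-f_2\circ a_1[-1])$ by taking a direct sum of triangles rather than your second octahedron with the split inclusion $i_2$, and it writes out the dual half explicitly instead of invoking duality.
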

\begin{proof}
We apply the octahedral axiom to the composition of morphisms: $$\cone(f_1)[-1] \nameto{-a_1[-1]} A \nameto{(f_1, f_2)^T} B_1 \oplus B_2$$
We note that $-a_1[-1]$ composes to zero with $f_1$.

We have the following distinguished triangles:
$$\cone(f_1)[-1] \nameto{-a_1[-1]} A \nameto{f_1} B_1 \nameto{b_1} \cone(f_1),$$
$$A \nameto{(f_1, f_2)^T} B_1 \oplus B_2 \nameto{b} \cone((f_1, f_2)^T) \nameto{a} A[1],$$
and two direct sum of two distinguished triangles:
$$\cone(f_1)[-1] \nameto{-f_2 \circ a_1[-1]} B_2 \nameto{g} \cone(-f_2 \circ a_1[-1]) \nameto{h} \cone(f_1)$$
and
$$0 \nameto{} B_1 \nameto{\id_{B_1}} B_1 \nameto{} 0,$$
which corresponds to the distinguished triangle for $-(f_1, f_2)^T \circ a_1[-1]$ because the first coordinate is zero.

Using the octahedral axiom, it is possible to construct:
$$B_1 \nameto{} B_1 \oplus \cone(-f_2 \circ a_1[-1]) \nameto{} \cone((f_1, f_2)^T) \nameto{} B_1[1]$$
The rightmost map in the distinguished triangle equals $f_1[-1] \circ a$, which is zero because $(f_1[-1], f_2[-1])^T \circ a$ is zero. Therefore, the distinguished triangle splits, and $B_1 \oplus \cone((f_1, f_2)^T) \cong B_1 \oplus \cone(-f_2 \circ a_1[-1])$.

Dually, we apply the octahedral axiom to the following composition:
$$A_1 \oplus A_2 \nameto{(f_1, f_2)} B \nameto{b_1} \cone(f_1)$$.
Combining distinguished triangles:
$$A_1 \oplus A_2 \nameto{(f_1, f_2)} B \nameto{b} \cone((f_1, f_2)) \nameto{a} (A_1 \oplus A_2)[1],$$
$$B \nameto{b_1} \cone(f_1) \nameto{a_1} A_1[1] \nameto{-f_1[1]} B[1],$$
and the direct sum of:
$$A_2 \nameto{b_1 f_2} \cone(f_1) \nameto{g} \cone(b_1 f_2) \nameto{h} A_2[1]$$
and
$$A_1 \nameto{\id_{A_1}} A_1 \nameto{} 0 \nameto{} A_1[1],$$
we obtain the following:
$$\cone((f_1, f_2)) \nameto{} A_1 \oplus \cone(b_1 f_2) \nameto{} A_1[1] \nameto{} \cone((f_1, f_2))[1]$$
This triangle splits, however, because the rightmost morphism is zero as the first coordinate of the composition $b \circ (f_1, f_2)$, which equals zero. We thus obtain that $A_1[1] \oplus \cone((f_1, f_2)) \cong A_1[1] \oplus \cone(b_1 f_2)$.

If $\mcT$ is Krull-Schmidt and $C \oplus D \cong C \oplus D'$, it is easy to see that the uniqueness of the direct decomposition into indecomposables yields that $D \cong D'$. The remaining claims of this lemma follow from this observation.
\end{proof}

\begin{proposition}\label{PBipartiteDissectionTriangles}
Let $(\Gamma, p)$ be a bipartite admissible dissection of the marked surface $(S, M, P)$, and let $\langle \mcL, \mcR \rangle$ be the associated semiorthogonal decomposition of $\Kb(\Lambda \proj)$. For every string object $P_{(\sigma, h)}$, there exist string objects $P_{(\delta_1, g_1)}, \dots, P_{(\delta_n, g_n)}$ such that:
\begin{enumerate}[(i)]
    \item $\sigma$ is homotopic to an iterated concatenation of $\delta_1, \dots, \delta_n$;
    \item either $P_{(\delta_i, g_i)} \in \mcL$ for all odd $i$ and $P_{(\delta_j, g_j)} \in \mcR$ for all even $j$ or $P_{(\delta_i, g_i)} \in \mcR$ for all odd $i$ and $P_{(\delta_j, g_j)} \in \mcL$ for all even $j$;
    \item $\delta_i$ and $\delta_{i+1}$ for $1 \leq i \leq n-1$ share an endpoint, and every shared endpoint gives rise to a map $f_{i, i+1}: P_{(\delta_i, g_i)} \to P_{(\delta_{i+1}, g_{i+1})}$ should $P_{(\delta_i, g_i)} \in \mcL$ or vice versa;
    \item the mapping cone $\bigoplus_{i \, \mathrm{even}} P_{(\delta_i, g_i)} \nameto{f} \bigoplus_{i \, \mathrm{odd}} P_{(\delta_j, g_j)}$ where $f$ consists of maps $f_{i, j}$ arising from shared endpoints of adjacent $\delta_i$ and $\delta_j$ or vice versa is isomorphic to $P_{(\sigma, h)}$.
\end{enumerate}
\end{proposition}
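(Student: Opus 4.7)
The plan is to re-use the construction from the proof of Proposition~\ref{PBipartiteDissectionDecomposition}: for the $\gpoint$-arc $\sigma$ in minimal position with~$\Gamma$, that proof produces a sequence of $\gpoint$-arcs $\gamma_1, \ldots, \gamma_N$ in $\Gamma$ whose iterated concatenation is homotopic to $\sigma$, with consecutive pairs sharing an endpoint. Transporting $h$ through these concatenations yields compatible gradings $h_j$ on each $\gamma_j$, and each shared-endpoint meeting produces, by the discussion preceding Proposition~\ref{PBipartiteDissectionTriangles} together with Theorem~4.1 in \cite{opper2018geometric}, a canonical morphism between $P_{(\gamma_j, h_j)}$ and $P_{(\gamma_{j+1}, h_{j+1})}$ whose mapping cone represents the corresponding concatenation.

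Next, group consecutive $\gamma_j$'s having a common $p$-value into maximal runs and let $\delta_i$ be the iterated concatenation of the $i$-th run, with the grading $g_i$ induced from the $h_j$'s. Since each of $\mcL$ and $\mcR$ is triangulated and every iterated cone inside a single run stays inside the corresponding subcategory, each $P_{(\delta_i, g_i)}$ lies in $\mcL$ or $\mcR$ according to the common $p$-value of its constituent $\gamma_j$'s. The runs alternate between the two subcategories by maximality, which gives (i) and (ii). Consecutive $\delta_i, \delta_{i+1}$ inherit the shared endpoint sitting at the junction of their defining runs, and the bipartite property of $(\Gamma, p)$ forces the $\mcL$-arc to precede the $\mcR$-arc in the counter-clockwise order around this endpoint, thereby pinning the direction of the canonical morphism $f_{i,i+1}$ to run from the $\mcL$-object to the $\mcR$-object; this gives (iii).

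For (iv), I would proceed by induction on $n$. The case $n = 1$ is trivial since $\sigma = \delta_1$ and the empty direct sum yields $\cone(0 \to P_{(\delta_1, g_1)}) = P_{(\delta_1, g_1)}$. For the step, put $\sigma' = \delta_1 \ast \cdots \ast \delta_{n-1}$ and note that $\delta_n$ shares an endpoint only with $\delta_{n-1}$ among the earlier $\delta$'s. By the inductive hypothesis, $P_{(\sigma', h')}$ is the cone of the analogous bipartite map $f'$ built for $\delta_1, \ldots, \delta_{n-1}$. Appending $\delta_n$ enlarges one of the two direct sums by a single summand and introduces exactly one new non-zero matrix entry $f_{n, n-1}$. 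Applying the appropriate direction of Lemma~\ref{LMappingConeReduction} (the forward version if $\delta_n$ is added to the codomain, the dual version if it is added to the domain) reduces the cone of the enlarged map, up to a direct summand killed by the Krull--Schmidt property of $\Kb(\Lambda\proj)$, to the cone of a single induced morphism between $P_{(\sigma', h')}$ and $P_{(\delta_n, g_n)}$. Identifying this induced morphism with the canonical shared-endpoint map at the junction of $\sigma'$ and $\delta_n$, Theorem~4.1 in \cite{opper2018geometric} then exhibits $P_{(\sigma, h)}$ as its cone.

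The hardest part will be this final identification in the inductive step: one must verify that the composition produced by Lemma~\ref{LMappingConeReduction}, which is expressed in terms of the connecting morphism of the triangle realising $P_{(\sigma', h')}$, coincides up to a non-zero scalar with the canonical shared-endpoint morphism between $P_{(\sigma', h')}$ and $P_{(\delta_n, g_n)}$. This requires tracing the connecting morphism through successive applications of the octahedral axiom and reconciling the grading conventions so that all intervening morphisms sit in the correct homological degree. A secondary bookkeeping point is matching the direction of $f$ prescribed in~(iv) with the semiorthogonality direction $\Hom(\mcR, \mcL) = 0$, which amounts to choosing whether the indexing starts with an $\mcL$-arc or an $\mcR$-arc.
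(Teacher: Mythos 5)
Your construction of the $\delta_i$ and your overall strategy coincide with the paper's: the sequence of arcs from the proof of Proposition~\ref{PBipartiteDissectionDecomposition}, grouped into maximal runs of constant $p$-value, then an induction on the number of pieces using Lemma~\ref{LMappingConeReduction}, the Krull--Schmidt property of $\Kb(\Lambda\proj)$, and the fact that the cone of a shared-endpoint map is the concatenation (Theorem~4.1 in \cite{opper2018geometric}, cf.\ Proposition~3.7 in \cite{amiot2019complete}). Parts (i)--(iii) are fine. The gap is in the inductive step for (iv), and it is exactly the point you flag but do not resolve. You peel off the last arc $\delta_n$ and invoke the inductive hypothesis that $P_{(\sigma',h')}\cong\cone(f')$; Lemma~\ref{LMappingConeReduction} then replaces the enlarged cone by the cone of a morphism of the form $-f_{n-1,n}\circ a_1[-1]$ (or $b_1\circ f_{n,n-1}$ in the dual case), where $a_1$ (resp.\ $b_1$) is a structure morphism of the triangle exhibiting $P_{(\sigma',h')}$ as an iterated cone. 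But your inductive hypothesis records only the isomorphism type of that cone, not the triangle itself, so it gives you no control over $a_1$ or $b_1$; the claimed identification of the induced morphism with the canonical shared-endpoint map between $P_{(\sigma',h')}$ and $P_{(\delta_n,g_n)}$ cannot be carried out from what you have (nor can it be replaced by a pure Hom-space count, since $\sigma'$ and $\delta_n$ may have several graded intersections). To make your back-to-front induction work you would have to strengthen the hypothesis, e.g.\ to also assert that the relevant component of the connecting morphism of the realizing triangle, restricted to the $P_{(\delta_{n-1},g_{n-1})}$-summand, is the graph map given by the terminal overlap of $\sigma'$ with $\delta_{n-1}$.

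The paper avoids this by merging at the front instead of appending at the back: at each step Lemma~\ref{LMappingConeReduction} is applied so as to split off the single component $f_{1,2}$, so the only connecting morphism ever needed is the one in the basic two-arc triangle $P_{(\delta_1,g_1)}\to P_{(\delta_2,g_2)}\xrightarrow{\,p\,} P_{(\delta_{1,2},h_{1,2})}\to P_{(\delta_1,g_1)}[1]$, where $p$ is known explicitly to be an endpoint map; one then only checks that $p\circ f_{3,2}$ is non-zero and again represented by a shared endpoint, replaces $\delta_1,\delta_2$ by the concatenation $\delta_{1,2}$, and continues with one arc fewer (dropping the alternation condition (ii) along the way, since $\delta_{1,2}$ no longer lies in $\mcL$ or $\mcR$). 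Either reorganize your induction in this way, or strengthen your inductive hypothesis as indicated; as written, the final identification is a genuine missing step rather than mere bookkeeping.
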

\begin{proof}
Assume that $P_{\sigma, h}$ is a string object reprsented by a $\gpoint$-arc $\sigma$ with grading $h$. As in the proof of Proposition \ref{PBipartiteDissectionDecomposition}, we can construct a sequence: $$\gamma_1^{(1)}, \dots, \gamma_{m_1}^{(1)}, \gamma_1^{(2)}, \dots, \gamma_{m_2}^{(2)}, \dots, \gamma_1^{(n)}, \dots, \gamma_{m_n}^{(n)}$$ of $\gpoint$-arcs in $\Gamma$ such that $\sigma$ is homotopic to the iterated concatenation thereof.

From this sequence, we concatenate its maximal segments such that all $\gpoint$-arcs in the segment belong to the same part of $(\Gamma, p)$. We obtain a sequence of $\delta_1, \dots, \delta_n$, whose iterated concatenation remains homotopic to $\sigma$. Without loss of generality, we may assume that $\delta_i$ for $i$ odd are concatenations of $\gamma \in \Gamma$ such that $p(\gamma) = 1$ and conversely for $\delta_j$ with $j$ even. Consequently, for any choice of gradings, string objects associated to $\delta_i$, $i$ odd, belong $\mcL$; whereas, string objects associated to $\delta_j$, $j$ even, belong $\mcR$.

By construction, $\delta_i$ and $\delta_{i+1}$ share an endpoint for all $1 \leq i \leq n-1$. We may choose gradings $g_1, \dots, g_n$ such that the common endpoints induce maps from $f_{i, i+1}: P_{(\delta_i, g_i)} \to P_{(\delta_{i+1}, g_{i+1})}$, for odd $1 \leq i \leq n-1$, and $f_{i, i-1}: P_{(\delta_i, g_i)} \to P_{(\delta_{i-1}, g_{i-1})}$, for odd $3 \leq i \leq n$, and that the concatenation of $(\delta_1, g_1), \dots, (\delta_n, g_n)$ has the same grading as $(\sigma, h)$ (this simply entails choosing $g_1$ such that there is map $P_{(\sigma, h)} \to P_{(\delta_1, g_1[1])}$ at their shared endpoint).

Having established that $P_{(\delta_i, g_i)}, \dots, P_{(\delta_i, g_i)}$ satisfy conditions (i) to (iii), it remains to prove that $P_{\sigma, h}$ is the mapping cone of the following map: $$\bigoplus_{i \, \mathrm{odd}} P_{(\delta_i, g_i)} \nameto{f} \bigoplus_{i \, \mathrm{even}} P_{(\delta_i, g_i)}$$
where the only components of $f$ from $P_{(\delta_i, g_i)}$ are $f_{i, i-1}$ and $f_{i, i+1}$, if defined.

We note that the case of $n = 1$ is trivial and that the case of $n = 2$ is a direct application of Theorem 4.1 in \cite{opper2018geometric}, which yields that the mapping cone of a map arising from two $\gpoint$-arcs meeting at a $\gpoint$-endpoint corresponds to their concatenation at this common endpoint. Subsequently, we proceed in calculation of the mapping cone by induction. For the induction, we abstract from the condition (ii).

Noting that $\Kb(\Lambda \proj)$ is a Krull-Schmidt category, we apply Lemma \ref{LMappingConeReduction} to the following datum:
$$P_{(\delta_1, g_1)} \oplus \bigoplus_{i > 1, \, \mathrm{odd}} P_{(\delta_i, g_i)} \nameto{(f_{1,2}, f')} \bigoplus_{i \, \mathrm{even}} P_{(\delta_i, g_i)}$$
It yields that the mapping cone of $f$ isomorphic to the mapping cone of the composition:
$$\bigoplus_{i > 1, \, \mathrm{odd}} P_{(\delta_i, g_i)} \nameto{f'} P_{(\delta_2, g_2)} \oplus \bigoplus_{i > 2 \, \mathrm{even}} P_{(\delta_i, g_i)} \nameto{p \oplus \id} \cone(f_{1, 2}) \oplus \bigoplus_{i > 2 \, \mathrm{even}} P_{(\delta_i, g_i)}$$
where $\cone(f_{1,2})$ is isomorphic to $P_{(\delta_{1,2}, h_{1,2})}$, $\delta_{1,2}$ is the concatenation of $\delta_1$ and $\delta_2$ at their shared endpoint that gives rise to $f_{1,2}$ and the grading $h_{1,2}$ is such that there is a map $P_{(\delta_{1,2}, h_{1,2})} \to P_{(\delta_1, h_1[1])}$ at the other endpoint of $\delta_1$, and $p: P_{(\delta_2, g_2)} \to \cone(f_{1,2})$ is the map from the distinguished triangle pertaining to $f_{1,2}$. The map $p$ is represented by a common endpoint of $\delta_2$ and $\delta_{1,2}$, which is the other endpoint of $\delta_2$ shared with $\delta_3$. This means that $p \circ f_{3,2}$ is non-zero and it is represented by a common endpoint.

We reduced the number of $\gpoint$-arcs by one, and we have thus performed the induction step, and our proof is complete.
\end{proof}

\subsection{Good cuts of a marked surface and semiorthogonal decompositions}
In this subsection, we associate a specific way how to cut $(S, M, P)$, a \textit{good cut}, to every bipartite admissible dissections thereof, and we prove that two bipartite admissible dissections of $(S, M, P)$ yield the same semiorthogonal decomposition of $\Kb(\Lambda \proj)$ if and only if they cut the underlying marked surface along the same lines up to homotopy.

Also, we show that for any good cut of $(S, M, P)$ it is always possible to construct an associated bipartite admissible of $(S, M, P)$ such that they cut the underlying marked surface along the same lines up to homotopy. Hence, we obtain a one-to-one correspondence between semiorthogonal decompositions of $\Kb(\Lambda \proj)$ and good cuts of $(S, M, P)$.

\begin{definition}\label{DDividingArcs}
Given $(\Gamma, p)$, a bipartite admissible dissection of $(S, M, P)$, we say that a connected component, $\mcP$, of the complement of $\Gamma$ in $S \, \backslash \, P$ is mixed if its boundary contains arcs in different parts of $(\Gamma, p)$. There is an arc in $\mcP$ from the unique $\gpoint$-point on the boundary at which arcs in different parts of $(\Gamma, p)$ meet to the $\rpoint$-point on the $\partial S$-segment of the boundary of $\mcP$. We call it a \textit{dividing arc} of $\mcP$.
\end{definition}

\begin{lemma}\label{LCrossingDividingArcs}
Let $(\Gamma, p)$ be a bipartite admissible dissection of $(S, M, P)$, and let $\sigma$ be $\gpoint$-arc in $S$. The arc $\sigma$ does not cross any dividing arc of a mixed component of the complement of $\Gamma$ in $S \, \backslash \, P$ if and only if $P_{(\sigma, h)}$ lies in $\mcL_{(\Gamma, p)}$ or $\mcR_{(\Gamma, p)}$.
\end{lemma}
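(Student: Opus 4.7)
The plan is to reduce the lemma to a combinatorial statement about the decomposition furnished by Proposition~\ref{PBipartiteDissectionTriangles}: for any $\gpoint$-arc $\sigma$ with grading $h$, this proposition produces a sequence of arcs $\delta_1, \ldots, \delta_n$ alternating between the two parts of $(\Gamma, p)$ and expresses $P_{(\sigma, h)}$ as the mapping cone $\cone(A \to B)$, where $A = \bigoplus_{i\ \text{even}} P_{(\delta_i, g_i)}$ and $B = \bigoplus_{i\ \text{odd}} P_{(\delta_i, g_i)}$. The central technical claim I aim to prove is the equivalence: $n = 1$ if and only if $\sigma$ does not cross any dividing arc. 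The lemma will then drop out of this equivalence together with a short triangulated argument.

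For the direction that absence of crossings forces $n = 1$, I would trace $\sigma$ through the connected components $\mcP_1, \ldots, \mcP_k$ of the complement of $\Gamma$ in $S \setminus P$. In each mixed component $\mcP_i$, the dividing arc $d_i$ separates the boundary arcs of $\partial \mcP_i$ into their part-1 and part-2 portions and splits $\mcP_i$ itself into two sub-regions. Not crossing $d_i$ forces the segment of $\sigma$ in $\mcP_i$ to lie in one such sub-region, so the arcs of $\Gamma$ bounding the cut-off region (used in the construction from the proof of Proposition~\ref{PBipartiteDissectionDecomposition}) lie in a single part. Since the shared arc between $\mcP_i$ and $\mcP_{i+1}$ belongs to a unique part, this choice of side propagates consistently across all traversed components, so every arc in the entire pre-merging sequence lies in a single part and the merging step yields $n = 1$. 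The converse is immediate: if $n = 1$, then $\sigma$ is homotopic to a concatenation of arcs from a single part, which manifestly stays on one side of every dividing arc.

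Granted this equivalence, the lemma follows quickly. If $n = 1$, then $P_{(\sigma, h)} \cong P_{(\delta_1, g_1)}$ lies in $\mcL$ or $\mcR$ according to the part of $\delta_1$. If $n \geq 2$, both $A$ and $B$ are non-zero and, say (the other case being symmetric), $A \in \mcR$ and $B \in \mcL$. The two-out-of-three property of triangulated subcategories applied to the distinguished triangle $A \to B \to P_{(\sigma, h)} \to A[1]$ shows that $P_{(\sigma, h)} \in \mcL$ would give $A \in \mcL \cap \mcR$ while $P_{(\sigma, h)} \in \mcR$ would give $B \in \mcL \cap \mcR$; since $\Hom(\mcR, \mcL) = 0$ implies $\mcL \cap \mcR = 0$, both cases force $A = 0$ or $B = 0$, contradicting non-vanishing. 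Hence $P_{(\sigma, h)} \notin \mcL \cup \mcR$.

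I expect the main difficulty to lie in making the geometric case analysis of the forward implication of the central equivalence fully rigorous when an endpoint of $\sigma$ lies inside a mixed component $\mcP$, particularly at the $\gpoint$-corner where arcs of different parts meet. There the notion of ``the side of $\sigma$'' in $\mcP$ near the endpoint must be interpreted through the angular position of $\sigma$ relative to the surrounding arcs of $\Gamma$, and one must confirm that the emanating direction of $\sigma$ selects a well-defined part compatible with the construction of the $\delta_i$. The bipartite ordering around each $\gpoint$-marked point, together with the placement of the dividing arc at each transition corner between parts 1 and 2, should deliver the needed compatibility.
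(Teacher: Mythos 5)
Your proposal is correct, but its second half takes a different route from the paper. The forward direction (no crossing implies membership in $\mcL_{(\Gamma,p)}$ or $\mcR_{(\Gamma,p)}$) is essentially the paper's argument: one traces $\sigma$ through the components of the complement of $\Gamma$, observes that the arcs in the sequence from the proof of Proposition~\ref{PBipartiteDissectionDecomposition} all carry the same value of $p$, and concludes that $P_{(\sigma,h)}$ lies in the subcategory generated by one part; your extra step through $n=1$ in Proposition~\ref{PBipartiteDissectionTriangles} is cosmetic. For the converse, however, the paper argues directly from the crossing: if $\sigma$ crosses the dividing arc of a mixed component, it must cross (or meet at an endpoint, with the appropriate counter-clockwise order) a part-1 arc $\gamma_1$ and a part-2 arc $\gamma_2$ of $\Gamma$, producing nonzero morphisms $P_{(\sigma,h)}\to P_{(\gamma_1,c_1)}$ and $P_{(\gamma_2,c_2)}\to P_{(\sigma,h)}$, which semiorthogonality forbids in either case $P_{(\sigma,h)}\in\mcR$ or $P_{(\sigma,h)}\in\mcL$. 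You instead pass through the contrapositive ``crossing implies $n\ge 2$'' and then run a clean triangulated argument on the triangle $A\to B\to P_{(\sigma,h)}\to A[1]$ of Proposition~\ref{PBipartiteDissectionTriangles}, using two-out-of-three and $\mcL\cap\mcR=0$ (from $\Hom(\mcR,\mcL)=0$); this is sound, and $A,B\neq 0$ when $n\ge 2$, so the contradiction goes through. What the paper's version buys is locality: it only needs the intersection/endpoint calculus of Remark~3.8 of \cite{opper2018geometric} and not the full cone decomposition. What your version buys is that all homological content is concentrated in one already-proved proposition; but be aware that the step you call ``immediate'' --- that $n=1$ forces no crossing, i.e.\ that a concatenation of arcs from a single part never essentially crosses a dividing arc --- does require the same local angular analysis at the $\gpoint$-corner of the mixed component (the dividing arc sits in the sector between the last part-1 and first part-2 arc, so a detour of the concatenation around that marked point stays within one part's sector), which is exactly the kind of corner argument you flag for the forward direction; it is true, and at the level of detail the paper itself uses, but it is not free.
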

\begin{proof}
We consider a sequence, $\gamma_1^{(1)}, \dots, \gamma_{m_1}^{(1)}, \gamma_1^{(2)}, \dots, \gamma_{m_2}^{(2)}, \dots, \gamma_1^{(n)}, \dots, \gamma_{m_n}^{(n)}$, of $\gpoint$-arcs in $\Gamma$ such that $\sigma$ is homotopic to the iterated concatenation thereof as constructed in the proof of Proposition \ref{PBipartiteDissectionDecomposition}. The function $p$ attains the same value for all arcs in the sequence because $\sigma$ does not cross any dividing arc of a mixed component of the complement of $\Gamma$ in $S \, \backslash \, P$.

The object $P_{(\sigma, h)}$ lies in the triangulated subcategory generated by $P_{\gamma_j^{(i)}}$. Because all $\gamma_j^{(i)}$ belong to the same part of $(\Gamma, p)$, $P_{(\sigma, h)}$ lies inside either $\mcL_{(\Gamma, p)}$ or $\mcR_{(\Gamma, p)}$.

Now, we suppose that $\sigma$ crosses a dividing arc of a mixed component $\mcP$ of the complement of $\Gamma$ in $S \, \backslash \, P$. This means that $\sigma$ has to cross $\gamma_1 \in \Gamma$ such that $p(\gamma_1) = 1$ or share an endpoint with $\gamma_1$ such that $\gamma_1$ follows $\sigma$ in the counter-clockwise order around it and that $\sigma$ has to cross $\gamma_2 \in \Gamma$ such that $p(\gamma_2) = 2$ or share an endpoint with $\gamma_2$ such that $\sigma$ follows $\gamma_2$ in the counter-clockwise order around it. Therefore, there are non-trivial morphisms $P_{(\sigma, h)} \to P_{(\gamma_1, c_1)}$ and $P_{(\gamma_2, c_2)} \to P_{(\sigma, h)}$ for suitable gradings. Because $P_{(\gamma_1, c_1)} \in \mcL_{(\Gamma, p)}$ and $P_{(\gamma_2, c_2)} \in \mcR_{(\Gamma, p)}$, we obtain that $P_{(\sigma, h)}$ has to be trivial.
\end{proof}

\begin{proposition}\label{PBipartiteDissectionsEquivalence}
Two bipartite admissible dissection of $(S, M, P)$ give rise to the same semiorthogonal decomposition of $\Kb(\Lambda \proj)$ if and only if dividing arcs of their mixed components are homotopic.
\end{proposition}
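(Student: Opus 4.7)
Both implications pivot on Lemma \ref{LCrossingDividingArcs}, which characterises the string complexes in $\mcL\cup\mcR$ as those represented by $\gpoint$-arcs crossing no dividing arc. The strategy is to use this to translate between subcategories and arc configurations, and to exploit the mapping-cone decomposition of Proposition \ref{PBipartiteDissectionTriangles} to pin down which part of the decomposition an arc belongs to.

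\emph{Sufficiency.} Assume $(\Gamma_1,p_1)$ and $(\Gamma_2,p_2)$ have the same dividing arcs up to homotopy. I show $\mcL_1=\mcL_2$ and $\mcR_1=\mcR_2$. Pick $\gamma\in\Gamma_1$ with $p_1(\gamma)=1$. Since the dividing arcs of $(\Gamma_1,p_1)$ lie in the interiors of complementary components of $\Gamma_1$, the arc $\gamma$ does not cross any of them, hence by the homotopy hypothesis it crosses no dividing arc of $(\Gamma_2,p_2)$ either. Lemma \ref{LCrossingDividingArcs} then places $P_{(\gamma,f)}$ in $\mcL_2\cup\mcR_2$. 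To see that it lies in $\mcL_2$, I apply Proposition \ref{PBipartiteDissectionTriangles} to write $P_{(\gamma,f)}$ as an iterated cone of $P_{(\delta_j,g_j)}$ for arcs $\delta_j\in\Gamma_2$ all situated on one fixed side of the common system of dividing arcs; the convention of Definition \ref{DBipartiteAdmissibleDissection} that $p=2$ arcs follow $p=1$ arcs in the counter-clockwise order at every shared endpoint ensures that this ``side'' is intrinsically determined by the dividing arcs, so $p_2(\delta_j)=1$ and $P_{(\gamma,f)}\in\mcL_2$. The reverse inclusion and the statement for $\mcR$ follow by symmetry; band complexes reduce to their one-dimensional case by Remark \ref{RHigherDimensionalBands} and are handled by an analogue of the same argument applied to closed curves.

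\emph{Necessity.} I argue contrapositively. Assume without loss of generality that some dividing arc $\delta$ of $(\Gamma_1,p_1)$ is not homotopic to any dividing arc of $(\Gamma_2,p_2)$. I construct a graded $\gpoint$-arc $(\sigma,h)$ that crosses $\delta$ but no dividing arc of $(\Gamma_2,p_2)$: isotope $\delta$ into minimal position with $D_2$, pick a region of $S\setminus D_2$ that $\delta$ enters, and take $\sigma$ to be a short arc transverse to $\delta$ with both endpoints in $M_{\gpoint}\cup P_{\gpoint}$ lying in that region. Applying Lemma \ref{LCrossingDividingArcs} to both dissections gives $P_{(\sigma,h)}\in\mcL_2\cup\mcR_2$ but $P_{(\sigma,h)}\notin\mcL_1\cup\mcR_1$, contradicting $\langle\mcL_1,\mcR_1\rangle=\langle\mcL_2,\mcR_2\rangle$.

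\emph{Main obstacle.} The delicate step is the geometric construction of the separating test arc $\sigma$ in the necessity direction: one must genuinely produce a $\gpoint$-arc (with endpoints on $M_{\gpoint}\cup P_{\gpoint}$, avoiding the wrong punctures, and admitting a grading) that distinguishes the two systems of dividing arcs. This is a topological claim about marked surfaces that I expect to establish by a minimal-intersection isotopy argument together with a small case analysis depending on whether the endpoints of $\delta$ lie on $\partial S$ or at a $\rpoint$-puncture. A secondary concern, in the sufficiency direction, is ensuring the intrinsic nature of the $p$-labelling when $(\Gamma_1,p_1)$ and $(\Gamma_2,p_2)$ share dividing arcs; this is exactly what the counter-clockwise convention at shared endpoints guarantees, which is why dividing arcs alone (up to homotopy) already carry the full decomposition datum.
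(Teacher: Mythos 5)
Your sufficiency half (same dividing arcs $\Rightarrow$ same decomposition) is a workable direct argument, and it differs from the paper's route: the paper instead proves the contrapositive, by showing that if the decompositions differ then either some arc of $\Gamma_1$ gives an object outside $\mcL_{(\Gamma_2,p_2)}\cup\mcR_{(\Gamma_2,p_2)}$ (and Lemma \ref{LCrossingDividingArcs} finishes), or else one can relabel $\Gamma_1$ by a function $p_1'$ recording membership in $\mcL_{(\Gamma_2,p_2)}$ or $\mcR_{(\Gamma_2,p_2)}$ and compare the dividing arcs of $(\Gamma_1,p_1)$ with those of $(\Gamma_1,p_1')$. Your claim that the side of the cut intrinsically determines the label is correct by the counter-clockwise convention and is essentially what is checked in the proof of Proposition \ref{PDissectionGoodCut}, so this half can be completed along your lines (it is silent, as is the paper, only in the degenerate situation where the set of dividing arcs is empty).

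The genuine gap is in the necessity direction, exactly at the step you yourself flag as the main obstacle: you never establish the existence of a graded $\gpoint$-arc $\sigma$ that essentially crosses $\delta$ while being essentially disjoint from every dividing arc of $(\Gamma_2,p_2)$, and the sketched construction does not deliver it. A ``short arc transverse to $\delta$ with both endpoints in the chosen region'' is typically either contractible (hence not a $\gpoint$-arc at all) or homotopic rel endpoints off $\delta$, so the crossing it produces need not be essential and Lemma \ref{LCrossingDividingArcs} gives nothing; arranging an essential crossing with $\delta$ while avoiding essential crossings with the second cut is nontrivial, for instance when an arc of that cut shares the $\gpoint$-endpoint of $\delta$ or crosses $\delta$. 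The paper sidesteps this existence problem entirely by taking the witness object from $\Gamma_2$ itself: if $\Psi$ crosses some arc of $\Gamma_2$, that arc lies in $\mcL_{(\Gamma_2,p_2)}\cup\mcR_{(\Gamma_2,p_2)}$ but, crossing a dividing arc of $(\Gamma_1,p_1)$, lies in neither $\mcL_{(\Gamma_1,p_1)}$ nor $\mcR_{(\Gamma_1,p_1)}$; if $\Psi$ is disjoint from $\Gamma_2$, it lies in a component of the complement of $\Gamma_2$, the two arcs $\sigma_1,\sigma_2\in\Gamma_2$ on that component sharing the $\gpoint$-endpoint of $\Psi$ must lie in the same part of $(\Gamma_2,p_2)$ (otherwise $\Psi$ would be one of its dividing arcs), and their concatenation is a mapping cone lying in $\mcL_{(\Gamma_2,p_2)}$ or $\mcR_{(\Gamma_2,p_2)}$ which visibly crosses $\Psi$, hence by Lemma \ref{LCrossingDividingArcs} lies in neither part of the first decomposition. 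Until your test-arc existence claim is proved (or replaced by an argument of this kind, which needs no auxiliary topological input), the necessity direction of your proof is incomplete.
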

\begin{proof}
Let us have two bipartite admissible dissections $(\Gamma_1, p_1)$ and $(\Gamma_2, p_2)$. Without loss of generality, we suppose that there exists a dividing arc $\Psi$ of $(\Gamma_1, p_1)$ that is not, up to homotopy, a dividing arc of $(\Gamma_2, p_2)$. If $\Psi$ crosses an arc in $\Gamma_2$, it follows from Lemma \ref{LCrossingDividingArcs} above that $(\Gamma_1, p_1)$ and $(\Gamma_2, p_2)$ do not give the same semiorthogonal decompositions of $\Kb(\Lambda \proj)$.

We can assume that $\Psi$ lies in a connected component $\mcP$ of the complement of $\Gamma_1$ in $S \, \backslash \, P$. There are two $\gpoint$-arcs $\sigma_1, \sigma_2 \in \Gamma_2$ on the boundary of $P$ sharing a common endpoint with $\Psi$; the $\gpoint$-endpoint of $\Psi$ may not be adjacent to the $\rpoint$-endpoint on the same boundary component of $S$ since $\Psi$ would not be a dividing arc otherwise. Also, $\sigma_1$ and $\sigma_2$ need to belong to the same part of $(\Gamma_2, p_2)$ because $\Psi$ is not a dividing arc of $(\Gamma_2, p_2)$. Therefore, the object of $\Kb(\Lambda \proj)$ corresponding, up to grading, to the concatenation of $\sigma_1$ and $\sigma_2$ lies in $\mcL_{(\Gamma_2, p_2)}$ or $\mcR_{(\Gamma_2, p_2)}$, but it belongs to neither $\mcL_{(\Gamma_1, p_1)}$ nor $\mcR_{(\Gamma_1, p_1)}$ by Lemma \ref{LCrossingDividingArcs} as it crosses a dividing arc $\Psi$ of $(\Gamma_2, p_2)$.\\

Suppose that $(\Gamma_1, p_1)$ and $(\Gamma_2, p_2)$, two bipartite admissible dissections of $(S, M, P)$, give rise to different semiorthogonal decompositions of $\Kb(\Lambda \proj)$. Provided that there exists a $\gpoint$-arc $\sigma \in \Sigma_1$ such that the corresponding object of $\Kb(\Lambda \proj)$, up to grading, does not lie in either $\mcL_{(\Gamma_2, p_2)}$ or $\mcR_{(\Gamma_2, p_2)}$, it follows from Lemma \ref{LCrossingDividingArcs} that $\sigma$ crosses a dividing arc of $(\Gamma_2, p_2)$. Hence, $(\Gamma_1, p_1)$ and $(\Gamma_2, p_2)$ do not have, up to homotopy, the same dividing arcs.

Suppose now that all objects of $\Kb(\Lambda \proj)$ corresponding to $\gpoint$-arcs in $\Gamma_1$ given some grading lie either in $\mcL_{(\Gamma_2, p_2)}$ or in $\mcR_{(\Gamma_2, p_2)}$. Consider a function $p_1'$ that assigns an arc $\sigma \in \Gamma_1$ the value $1$ if the corresponding object $\Kb(\Lambda \proj)$ lies in $\mcL_{(\Gamma_2, p_2)}$ and the value $2$ if the corresponding object $\Kb(\Lambda \proj)$ lies in $\mcR_{(\Gamma_2, p_2)}$. In this way, we obtain a bipartite admissible dissection $(\Gamma_1, p_1')$. We have that $\mcL_{(\Gamma_1, p_1')} \subseteq \mcL_{(\Gamma_2, p_2)}$ and $\mcR_{(\Gamma_1, p_1')} \subseteq \mcR_{(\Gamma_2, p_2)}$; thus $(\Gamma_1, p_1')$ and $(\Gamma_2, p_2)$ give rise to the same semiorthogonal decompositions of $\Kb(\Lambda \proj)$.

From the proof of the other implication above, we get that $(\Gamma_1, p_1')$ and $(\Gamma_2, p_2)$ have the same dividing arcs. However, $(\Gamma_1, p_1)$ and $(\Gamma_1, p_1')$ cannot have the same dividing arcs because it would imply that $p_1 = p_1'$.
\end{proof}

\begin{definition}\label{DCuttingMarkedSurface}
Let $(S, M, P)$ be marked surface. A \textit{dividing arc} is a smooth map $\omega: (0,1) \to S \, \backslash \, P$ such that $g_\omega = \lim_{x \to 0} \omega(x) \in M_{\gpoint}$ and $r_\omega = \lim_{x \to 1} \omega(x) \in M_{\rpoint}$ such that $g_\omega$ and $r_\omega$ are not adjacent on a component of $\partial S$. We can obtain a new marked surface, the \textit{cut surface}, $(S_\omega, M_\omega, P_\omega)$ by cutting $(S, M, P)$ along $\omega$.

The marked points of $(S_\omega, M_\omega, P_\omega)$ are $P_\omega = P$, $M_\omega {}_{\gpoint} = (M_{\gpoint} \backslash \{g_\omega\}) \cup \{g_\omega^\ell, g_\omega^r\}$ and $M_\omega {}_{\rpoint} = (M_{\rpoint} \backslash \{r_\omega\}) \cup \{r_\omega^\ell, r_\omega^r\}$ such that $g_\omega^\ell$ to $r_\omega^\ell$ precede $g_\omega^r$ to $r_\omega^r$ in the counter-clockwise order on their shared component of the boundary of $S_\omega$. We refer to $g_\omega^\ell, r_\omega^\ell$ and $g_\omega^r, r_\omega^r$ as to \textit{left added marked points} and  \textit{right added marked points}.

\begin{figure}[H]

    \centering
    
    \begin{tikzpicture}[x = 1cm,y = 1cm]
    \node[coordinate, name = GOmega] at (-3, 1) {};
    \node[coordinate, name = ROmega] at (-3, -1) {};
    
    \draw[dashed] (GOmega) -- (ROmega) node[midway, right] {$\omega$};
    
    \draw (GOmega) arc (270:250:5) node[coordinate, name = GOmegaAuxLeft]  {};
    \draw (GOmega) arc (270:290:5) node[coordinate, name = GOmegaAuxRight] {};
    \fill[pattern = north east lines]  (GOmegaAuxRight) -- (GOmegaAuxLeft) arc (250:290:5) -- cycle;
    
    \draw (ROmega) arc (90:110:5) node[coordinate, name = ROmegaAuxLeft]  {};
    \draw (ROmega) arc (90:70:5) node[coordinate, name = ROmegaAuxRight] {};
    \fill[pattern = north east lines]  (ROmegaAuxRight) -- (ROmegaAuxLeft) arc (110:70:5) -- cycle;
    
    \draw[dark-green, fill = white] (GOmega) circle (0.1) ;
    \draw[red, fill = red] (GOmegaAuxLeft) circle (0.1) ;
    \draw[red, fill = red] (GOmegaAuxRight) circle (0.1) ;

    \draw[red, fill = red] (ROmega) circle (0.1) ;
    \draw[dark-green, fill = white] (ROmegaAuxLeft) circle (0.1) ;
    \draw[dark-green, fill = white] (ROmegaAuxRight) circle (0.1) ;
    
    \node[below left] at (GOmega) {$g_\omega$};
    \node[above left] at (ROmega) {$r_\omega$};
    
    \draw[->, decorate, decoration = {snake, segment length = 2mm, amplitude = 0.25mm}] (-1,0) -- (1,0) node[] {};
    
    \node[coordinate, name = GOmegaCut] at (3, 1) {};
    \node[coordinate, name = ROmegaCut] at (3, -1) {};
    
    \path (GOmegaCut) arc (270:265:5) node[coordinate, name = GOmegaCut1] {};
    \path (GOmegaCut) arc (270:275:5) node[coordinate, name = GOmegaCut2] {};
    
    \path (ROmegaCut) arc (90:95:5) node[coordinate, name = ROmegaCut1] {};
    \path (ROmegaCut) arc (90:85:5) node[coordinate, name = ROmegaCut2] {};
    
    \draw (GOmegaCut1) arc (265:250:5) node[coordinate, name = GOmegaCut1Aux] {};
    \draw (GOmegaCut2) arc (275:290:5) node[coordinate, name = GOmegaCut2Aux] {};
    
    \draw (ROmegaCut1) arc (95:110:5) node[coordinate, name = ROmegaCut1Aux] {};
    \draw (ROmegaCut2) arc (85:70:5) node[coordinate, name = ROmegaCut2Aux] {};
    
    \draw (GOmegaCut1) -- (ROmegaCut1);
    \draw (GOmegaCut2) -- (ROmegaCut2);
    
    \fill[pattern = north east lines] (GOmegaCut1Aux) arc (250:265:5) -- (ROmegaCut1) arc (95:110:5) -- (ROmegaCut2Aux) arc (70:85:5) -- (GOmegaCut2) arc (275:290:5) -- cycle;
    
    \draw[dark-green, fill = white] (GOmegaCut1) circle (0.1) ;
    \draw[red, fill = red] (ROmegaCut1) circle (0.1) ;
    \draw[dark-green, fill = white] (ROmegaCut1Aux) circle (0.1) ;
    \draw[red, fill = red] (GOmegaCut1Aux) circle (0.1) ;
    
    \draw[dark-green, fill = white] (GOmegaCut2) circle (0.1) ;
    \draw[red, fill = red] (ROmegaCut2) circle (0.1) ;
    \draw[dark-green, fill = white] (ROmegaCut2Aux) circle (0.1) ;
    \draw[red, fill = red] (GOmegaCut2Aux) circle (0.1) ;
    
    \node[below right] at (GOmegaCut2) {$g_\omega^\ell$};
    \node[above right] at (ROmegaCut2) {$r_\omega^\ell$};
    
    \node[below left] at (GOmegaCut1) {$g_\omega^r$};
    \node[above left] at (ROmegaCut1) {$r_\omega^r$};
    
    \end{tikzpicture}
    
    \caption{Cutting $S$ along $\omega$}
    
    \label{FCutSurface}
    
\end{figure}
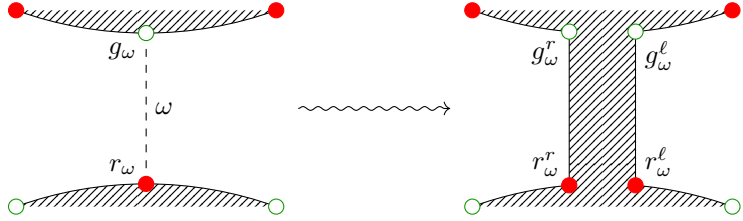

\end{definition}

\begin{remark}
It is easy to observe that $(S_\omega, M_\omega, P_\omega)$ is a well-defined marked surface because the new marked points are added in way that ensures that $\gpoint$-marked points and $\rpoint$-marked points still alternate on the component of $\partial S_\omega$ that they lie on. 
\end{remark}

\begin{remark}
It is always possible to reconstruct $(S, M, P)$ from $(S_\omega, M_\omega, P_\omega)$ by identifying the segments of $\partial S_\omega$ from $g_\omega^\ell$ to $r_\omega^\ell$ and from $g_\omega^r$ to $r_\omega^r$.
\end{remark}

We observe that if $\omega_1$ and $\omega_2$ are two dividing arcs on $(S, M, P)$ that do not intersect or share a common endpoint, it is possible to cut $(S, M, P)$ along them simultaneously. This observation is used in the following definition:

\begin{definition}\label{DGoodCut}
Let $(S, M, P)$ be marked surface. A collection of dividing arcs up to homotopy, $\Omega$, is called a \textit{good cut} if:
\begin{enumerate}[(i)]
    \item any two different arcs in $\Omega$ neither intersect nor meet at endpoints,
    \item no connected component of the cut surface $(S_\Omega, M_\Omega, P_\Omega)$ contains an left added marked point and an right added marked point,
    \item no connected component of the cut surface $(S_\Omega, M_\Omega, P_\Omega)$ is trivial, \textit{i.e.} homeomorphic to an open disk with only two marked points, one $\gpoint$-point and $\rpoint$-point, on its boundary.
\end{enumerate} 
\end{definition}

\begin{proposition}\label{PDissectionGoodCut}
Dividing arcs of a bipartite admissible dissection $(\Gamma, p)$ form a good cut $\Omega_{(\Gamma, p)}$ of the associated marked surface. On the other hand, a good cut $\Omega$ gives rise to a bipartite admissible dissection $(\Gamma_\Omega, p_\Omega)$ of the associated marked surface such that the dividing arcs of $(\Gamma_\Omega, p_\Omega)$ are the same as the arcs in $\Omega$.
\end{proposition}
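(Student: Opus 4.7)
The plan is to treat the two directions of the correspondence separately, using the three conditions of Definition \ref{DGoodCut} to organize the forward direction and a component-by-component dissection extension for the backward one.

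For the forward direction, I would show that $\Omega_{(\Gamma,p)}$ satisfies the three conditions of Definition \ref{DGoodCut}. Condition (i) is the easy part: two dividing arcs lie in disjoint mixed components of $S \setminus \Gamma$, so they cannot cross; their $\rpoint$-endpoints are distinct because Proposition \ref{PAdmissibleDissectionProperties}(ii) provides a unique $\rpoint$-marked point per component; their $\gpoint$-endpoints are distinct because the bipartite condition forces the cyclic sequence of $p$-labels of arcs of $\Gamma$ at every $\gpoint$-marked point to be monotone (all $p=1$'s followed by all $p=2$'s in counter-clockwise order), so at most one transition per point. For condition (ii), I would orient each dividing arc $\omega$ from its transition $\gpoint$-point $g_\omega$ to $r_\omega$; bipartiteness at $g_\omega$ gives that the \emph{left} side (the counter-clockwise side at $g_\omega$, in the sense of Definition \ref{DCuttingMarkedSurface}) is precisely the $p=1$-side of the local mixed component. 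Setting $\mcL'$ to be the union of $p=1$-halves of mixed components and $p=1$-only non-mixed components, and $\mcR'$ analogously, each $\Gamma$-arc is bordered on both sides by regions of its own label, so $\overline{\mcL'}$ and $\overline{\mcR'}$ partition $S_\Omega$ into disjoint unions of components. Condition (iii) is then direct: a trivial component $C$ of $S_\Omega$ would contain an $\omega$-piece on $\partial C$, and tracing the rest of $\partial C$ along $\partial S_\Omega$ from $r_\omega^\ell$ back to $g_\omega^\ell$ (or the right-side analogue) with no intermediate marked point would force $g_\omega$ and $r_\omega$ to be adjacent on $\partial S$, contradicting Definition \ref{DCuttingMarkedSurface}.

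For the backward direction, given a good cut $\Omega$, I would choose an admissible $\gpoint$-dissection on each connected component of $S_\Omega$ (each is a marked surface with no $\gpoint$-punctures) and re-glue to obtain a collection $\Gamma_\Omega$ of $\gpoint$-arcs in $S$. The label $p_\Omega(\gamma)$ is set to $1$ if $\gamma$ lies in a left component of $S_\Omega$ and $2$ if it lies in a right one; components containing no added marked points can be labeled arbitrarily. To see this is a bipartite admissible dissection: at any $\gpoint$-marked point $v$ not of the form $g_\omega$, all incident arcs of $\Gamma_\Omega$ lie in the same component of $S_\Omega$ and hence share a label, so no transition at $v$; at $v = g_\omega$ for some cut, $g_\omega^\ell$ and $g_\omega^r$ lie in different components (one left and one right), so the arcs counter-clockwise from $\omega$ carry $p=1$ and those clockwise carry $p=2$, yielding the bipartite pattern. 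Admissibility itself is inherited from the admissibility of each sub-dissection in $S_\Omega$, since re-gluing does not create new enclosed subsurfaces without punctures or boundary segments. Finally, I would identify the dividing arcs of $(\Gamma_\Omega,p_\Omega)$ with the arcs in $\Omega$: each $\omega \in \Omega$ is disjoint from every arc of $\Gamma_\Omega$, so $\omega$ sits inside a single region of $S \setminus \Gamma_\Omega$ which is bounded near $g_\omega$ by arcs of both colors; this is a mixed component whose unique dividing arc shares endpoints with $\omega$ in the same disk-type region, hence is homotopic to $\omega$. Conversely, the transition $\gpoint$-point of any mixed component of $\Gamma_\Omega$ must be split in $S_\Omega$, so it must be some $g_\omega$.

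I expect the main obstacle to be the uniformity of the ``left $=$ $p=1$'' convention in the forward direction's verification of condition (ii): one must carefully use the surface orientation and the counter-clockwise convention of Definition \ref{DBipartiteAdmissibleDissection} to argue that the choice ``left of $\omega$'' $=$ ``counter-clockwise side at $g_\omega$'' yields the $p=1$ side for \emph{every} cut $\omega$ simultaneously, so that the resulting labels of components of $S_\Omega$ are globally consistent. Once this is in hand, the rest of the argument becomes a bookkeeping exercise using Proposition \ref{PAdmissibleDissectionProperties}(ii) and standard surface topology; analogous care in the backward direction ensures the bipartite pattern at each $g_\omega$ matches the left/right orientation of $\omega$.
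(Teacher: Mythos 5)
Your overall skeleton is the same as the paper's: for the forward direction, verify conditions (i)--(iii) of Definition \ref{DGoodCut}; for the backward direction, choose admissible dissections of the components of the cut surface, re-glue, label by which side of the cuts a component lies on, and match dividing arcs with the arcs of $\Omega$. Your argument for (i) is the paper's argument. For (ii) you take a genuinely different route: the paper argues by contradiction, joining a label-$1$ and a label-$2$ arc inside a single component of the cut surface by a $\gpoint$-arc and decomposing it via the concatenation machinery of Proposition \ref{PBipartiteDissectionDecomposition} to force a crossing with a dividing arc, whereas you partition the cut surface into the closures of the label-$1$ and label-$2$ regions and observe that every $\Gamma$-arc is flanked on both sides by regions of its own label. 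This is valid and avoids re-invoking Proposition \ref{PBipartiteDissectionDecomposition}, at the price of the orientation bookkeeping you flag. Note, though, that your convention is mirrored relative to the one forced by the paper's definitions: with Definition \ref{DCuttingMarkedSurface} and the bipartite inequality, the component to the \emph{left} of $\omega$ is the one whose arcs \emph{follow} $\omega$ counter-clockwise at $g_\omega$ and hence carries label $2$, not $1$. This permutation is harmless for (i)--(iii), but it bites when you verify bipartiteness of $(\Gamma_\Omega,p_\Omega)$: as stated ("arcs counter-clockwise from $\omega$ carry $p=1$, those clockwise carry $p=2$") the labels decrease in the counter-clockwise order at $g_\omega$, violating Definition \ref{DBipartiteAdmissibleDissection}; the fix is simply to swap the assignment.

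The one genuine gap is your argument for condition (iii). You derive a contradiction with the non-adjacency requirement of Definition \ref{DCuttingMarkedSurface}, i.e.\ you use that the arc from the transition point $g_\omega$ of a mixed component to its $\rpoint$-point has non-adjacent endpoints. But that every element of $\Omega_{(\Gamma,p)}$ qualifies as a dividing arc in the sense of Definition \ref{DCuttingMarkedSurface} is itself part of what the forward direction must deliver; Definition \ref{DDividingArcs} does not grant it, so as written the step is circular. The paper's proof of (iii) does not rely on this: at $g_\omega$ there are arcs $\gamma_1,\gamma_2\in\Gamma$ with $p(\gamma_i)=i$ which cross no dividing arc, so every component of the cut surface (each of which carries an added marked point) contains an entire arc of $\Gamma$ and therefore cannot be a bigon. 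You should either adopt that argument or prove the required non-triviality of the arcs of $\Omega_{(\Gamma,p)}$ directly. A smaller point in the backward direction: "admissibility is inherited" only addresses the non-enclosure condition, while you also need maximality, i.e.\ that $\Gamma_\Omega$ is a dissection; this requires the paper's gluing observation that the two components of the complement of $\Gamma_\Omega$ in the cut surface adjacent to the two copies of $\omega$, each a disk with single $\rpoint$-point $r_\omega^\ell$ resp.\ $r_\omega^r$, glue to a single disk with the single $\rpoint$-point $r_\omega$, so that every component of the complement of $\Gamma_\Omega$ in $S\,\backslash\,P$ is again of the type described in Proposition \ref{PAdmissibleDissectionProperties}(ii).
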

\begin{proof}
Suppose that $(\Gamma, p)$ is a bipartite admissible dissection and that $\Omega_{(\Gamma, p)}$ is the associated set of dividing arcs. We will prove that $\Omega_{(\Gamma, p)}$ is a good cut of the underlying marked surface. Firstly, we observe that dividing arcs of $(\Gamma, p)$ lie in different connected components of the complement of $\Gamma$ in $S \, \backslash \, P$. Therefore, two dividing arcs of $\Gamma$ may not intersect. For the same reason, they may also not meet at $\rpoint$-marked point endpoints. If two dividing arcs of $(\Gamma, p)$ were to meet at a $\gpoint$-marked point, it would mean that in the counter-clockwise order there would be arcs $\gamma_1, \gamma_2, \gamma_3, \gamma_4$ with $p(\gamma_i) = 1, i \in \{1, 3\}$ and $p(\gamma_i) = 2, i \in \{2, 4\}$, a contradiction.

Assume now that, some connected component $\mcQ$ of the complement of $\Omega_{(\Gamma, p)}$ in $S \, \backslash \, P$ has both left and right added marked points on its boundary. It is easy to observe that if there is $g_\omega^\ell \in \partial \mcQ$ and $g_\omega^r \in \partial \mcQ$ for $\omega \in \Omega_{(\Gamma, p)}$, then we have $r_\omega^\ell \in \partial \mcQ$ and $r_\omega^r \in \partial \mcQ$, respectively, and vice versa because dividing arcs of $(\Gamma, p)$ do not intersect.

So there are $g_{\omega_1}^\ell, r_{\omega_1}^\ell, g_{\omega_2}^r$, and $r_{\omega_2}^r$ on the boundary of $\mcQ$ for some $\omega_1, \omega_2 \in \Omega_{(\Gamma, p)}$. This means that there are $\gpoint$-arcs $\gamma_1, \gamma_2 \in \Gamma$ such that $p(\gamma_i) = i$ that lie in $\mcQ$. Because $\mcQ$ is connected, we can take a $\gpoint$-arc $\sigma$ that connects $\gpoint$-marked points which are endpoints of the two dividing arcs pertaining to $\gamma_1$ and $\gamma_2$. Without loss of generality, $\sigma$ starts in the connected component of the complement of $\Gamma$ in $S \, \backslash \, P$ where $\omega_1$ lies, and $\sigma$ ends in the connected component of the complement of $\Gamma$ in $S \, \backslash \, P$ where $\omega_2$ lies.

Now, we consider a sequence, $\gamma_1^{(1)}, \dots, \gamma_{m_1}^{(1)}, \gamma_1^{(2)}, \dots, \gamma_{m_2}^{(2)}, \dots, \gamma_1^{(n)}, \dots, \gamma_{m_n}^{(n)}$, of $\gpoint$-arcs in $\Gamma$ such that $\sigma$ is homotopic to the iterated concatenation thereof as constructed in the proof of Proposition \ref{PBipartiteDissectionDecomposition}. The $\gpoint$-arc $\sigma$ in such a way that $\gamma_1^{(1)} = \gamma_1$ and $\gamma_{m_n}^{(n)} = \gamma_1$. As $p(\gamma_1) = 1$ and $p(\gamma_2) = 2$, the concatenation of $\gamma_1^{(1)}, \dots, \gamma_{m_1}^{(1)}, \gamma_1^{(2)}, \dots, \gamma_{m_2}^{(2)}, \dots, \gamma_1^{(n)}, \dots, \gamma_{m_n}^{(n)}$, which is $\sigma$, has to cross a dividing arc in $\Omega_{(\Gamma, p)}$. We have thus obtained a contradiction with the assumption that $\sigma$ is connected in the complement of $\Omega_{(\Gamma, p)}$ in $S \, \backslash \, P$.

To establish that $\Omega_{(\Gamma, p)}$ is a good cut, it remains to be shown that no connected component of its complement in $S \backslash P$ is trivial. However, for each $\omega \in \Omega_{(\Gamma, p)}$ there are two $\gpoint$-arcs $\gpoint$-arcs $\gamma_1, \gamma_2 \in \Gamma$ such that they share a $\gpoint$-endpoint with $\omega$ and that $p(\gamma_i) = i$. No dividing may cross either $\gamma_1$ or $\gamma_2$, so $\gamma_1$ lies in the component of the complement of $\Omega_{(\Gamma, p)}$ in $S \backslash P$ that has the left added marked point $g\_omega^\ell$ on this boundary, similarly for $\gamma_2$. Because every component of the complement of $\Omega_{(\Gamma, p)}$ in $S \backslash P$ contains an added marked point, it contains a $\gpoint$-arc, and so it is non-trivial.

Denote the marked surface $(S, M, P)$. Consider the connected components of the complement of $\Omega$ in $S \, \backslash \, P$. Each connected components of the complement of $\Omega$ in $S \, \backslash \, P$ is a marked surface and has an admissible dissection.

We choose an admissible dissection for each connected component and observe that together, as a set $\Gamma_\Omega$, they from an admissible dissection of the marked surface $(S, M, P)$. Arcs in admissible dissections of components that come from components with only left added marked points on their boundary are assigned $2$ by $p_\Omega$; other arcs, coming from components with only right added marked points are assigned $1$ by $p_\Omega$.

The $\gpoint$-arcs in $\Gamma_\Omega$ may clearly only meet at ther common $\gpoint$-endpoints since this is the case for every constituent connected component of the complement of $\Omega$ in $S \, \backslash \, P$, which are glued on segments on their boundary. Each connected component of the complement of $\Gamma_\Omega$ in $S \, \backslash \, P$ is glued from at most two connected components of the complement of $\Gamma_\Omega$ in the respective constituent connected components of the complement of $\Omega$ in $S \, \backslash \, P$ in the following way:

\begin{figure}[H]
    \centering
    
    \begin{tikzpicture}[x = 1cm, y = 1cm]
    
    \node[regular polygon, regular polygon sides = 5, minimum size = 3 cm, shape border rotate = 18] (Pen1) at (-2,0) {};
        
    \foreach \x in {1,...,5}
    {\node[coordinate, name = {Pen1Node\x}] at (Pen1.corner \x) {};}
    
    \node[regular polygon, regular polygon sides = 5, minimum size = 3 cm, shape border rotate = -18] (Pen2) at (2,0) {};
    
    \foreach \x in {1,...,5}
    {\node[coordinate, name = {Pen2Node\x}] at (Pen2.corner \x) {};}
    
    \draw (Pen1Node5) arc (36:-108:1.5) node[midway, coordinate, name = Pen1Red] {};
        
    \draw[dark-green] (Pen1Node5) -- (Pen1Node1);
    
    \draw[dark-green, dotted] (Pen1Node1) -- (Pen1Node2);
    
    \draw[dark-green] (Pen1Node2) -- (Pen1Node3);
    
    \draw (Pen2Node2) arc (144:288:1.5) node[midway, coordinate, name = Pen2Red] {};
        
    \draw[dark-green] (Pen2Node4) -- (Pen2Node5);
    
    \draw[dark-green, dotted] (Pen2Node5) -- (Pen2Node1);
    
    \draw[dark-green] (Pen2Node1) -- (Pen2Node2);
        
    \fill[pattern = north east lines] (Pen1Node3) arc (-108:36:1.5) to[out = 20, in = 160] (Pen2Node2) arc (144:288:1.5) to[out = 200, in = -20] (Pen1Node3); 
        
    \foreach \x in {1,2,3,5}
    {\draw[dark-green, fill = white] ({Pen1Node\x}) circle (0.1);}
    
    \foreach \x in {1,2,4,5}
    {\draw[dark-green, fill = white] ({Pen2Node\x}) circle (0.1);}
    
    \draw[red, fill = red] (Pen1Red) circle (0.1);
    
    \draw[red, fill = red] (Pen2Red) circle (0.1);
    
    \node[below left] at (Pen1Node5) {$g^r_\omega$};
    
   	\node[below right] at (Pen2Node2) {$g^\ell_\omega$};

    \node[above left] at (Pen1Red) {$r^r_\omega$};
    
    \node[above right] at (Pen2Red) {$r^\ell_\omega$};
    
    \end{tikzpicture}
    
    \begin{tikzpicture}[x = 1cm, y = 1cm]
    
    \path (0,1) -- (0,0.5);
    
   	\draw[->, decorate, decoration = {snake, segment length = 2mm, amplitude = 0.25mm}] (0,0.5) -- (0,-0.5) node[] {};
    	    
    \end{tikzpicture}
    
    \begin{tikzpicture}[x = 1cm, y = 1cm]
    
    \def\sides{5}
    \def\pensize{3}
        
    \pgfmathsetmacro{\penfactor}{0.5*cos(180/\sides)}
    \pgfmathsetmacro{\penonecenter}{-\pensize*\penfactor}
    \pgfmathsetmacro{\pentwocenter}{\pensize*\penfactor}
    
    \node[regular polygon, regular polygon sides = 5, minimum size = \pensize cm, shape border rotate = 18] (Pen1) at (\penonecenter,0) {};
        
    \foreach \x in {1,...,5}
    {\node[coordinate, name = {Pen1Node\x}] at (Pen1.corner \x) {};}
    
    \node[regular polygon, regular polygon sides = 5, minimum size = 3 cm, shape border rotate = -18] (Pen2) at (\pentwocenter,0) {};
    
    \foreach \x in {1,...,5}
    {\node[coordinate, name = {Pen2Node\x}] at (Pen2.corner \x) {};}
    
    \draw (Pen1Node3) to[out = 45, in = 135] node[midway, coordinate, name = Red] {} (Pen2Node4);
        
    \draw[dark-green] (Pen1Node5) -- (Pen1Node1);
    
    \draw[dark-green, dotted] (Pen1Node1) -- (Pen1Node2);
    
    \draw[dark-green] (Pen1Node2) -- (Pen1Node3);
            
    \draw[dark-green] (Pen2Node4) -- (Pen2Node5);
    
    \draw[dark-green, dotted] (Pen2Node5) -- (Pen2Node1);
    
    \draw[dark-green] (Pen2Node1) -- (Pen2Node2);
    
    \draw[dashed] (Pen1Node5) -- (Red);
    
    \fill[pattern = north east lines] (Pen1Node3) to[out = 45, in = 135] node[midway, coordinate, name = Red] {} (Pen2Node4) -- cycle; 
    
    \foreach \x in {1,2,3,5}
    {\draw[dark-green, fill = white] ({Pen1Node\x}) circle (0.1);}
    
    \foreach \x in {1,4,5}
    {\draw[dark-green, fill = white] ({Pen2Node\x}) circle (0.1);}
    
    \draw[red, fill = red] (Red) circle (0.1);
    
    \node[below left] at (Pen1Node5) {$g_\omega$};
    
    \node[above left] at (Red) {$r_\omega$};
    
    \path (Pen1Node5) -- (Red) node[midway, right] {$\omega$};
    	
    \end{tikzpicture}

    \caption{Gluing connected components of of the complement of $\Omega$ in $S \, \backslash \, P$.}
    \label{FGluingComponentsDissection}
\end{figure}

Hence every connected component of the complement of $\Gamma_\Omega$ in $S \, \backslash \, P$ has a single $\rpoint$-point, and $\Gamma_\Omega$ is an admissible dissection.

If two arcs in different parts of $(\Gamma_\Omega, p_\Omega)$ meet, they have to meet at a $\gpoint$-endpoint of one of the dividing arcs in $\Omega$. The $\gpoint$-arc in part $2$ lies follows the dividing arc in the counterclockwise order around the common endpoint because the component to which the $\gpoint$-arc belong lies to the left of the dividing arc; whereas, the $\gpoint$-arc in part $1$ has to precede in that order. This is enough to conclude that $(\Gamma_\Omega, p_\Omega)$ is a bipartite admissible dissection as well as that $\Omega$ is the set of dividing arcs of $(\Gamma_\Omega, p_\Omega)$.
\end{proof}

\begin{remark}
The proof that $\Gamma_\Omega$ may be also performed numerically using the following inductive argument as in the proof of Proposition 1.11 in \cite{amiot2019complete}. By $\gpoint$-arcs in admissible dissection equals $|M_{\gpoint}| + |P| + b + 2g - 2$ arcs, where $g$ is the genus of $S$ and $b$ is the number of connected components of $\partial S$. Consider $\omega$ a dividing arc of $S$ and the cut surface $S_\omega$. If $\omega$ connects two different connected components of the boundary $\partial S$ (Case 3 in the proof of Proposition 1.11 in \cite{amiot2019complete}), then the number of connected components of $S_\omega$, $b_\omega$, equals $b-1$. We have an extra $\gpoint$-marked point, so $(M_\omega)_{\gpoint} = M_{\gpoint} + 1$. It is easy to observe that the genus of $S_\omega$, $g_\omega$, and the number $\rpoint$-punctures remain unchanged, so $g = g_\omega$ and $|P_\omega| = |P|$. We get that any admissible dissection of $S_\omega$ has to have the same number of $\gpoint$-arcs as that of $S$. If $\omega$ starts and ends on the same boundary component, we need to distinguish whether $\omega$ is separating or non-separating. Provided that $\omega$ is non-separating (Case 1 in the proof of Proposition 1.11 in \cite{amiot2019complete}), the cut surface $S_\omega$ has a single connected component with $b+1$ boundary components and genus $g-1$. It has an extra $\gpoint$-marked point and the same number of punctures. Because $|(M_\omega)_{\gpoint}| + |P_\omega| + b_\omega + 2g_\omega - 2 = (|M_{\gpoint}| + 1) + |P| + (b + 1) + 2(g - 1) - 2 = |M_{\gpoint}| + |P| + b + 2g - 2$, the any admissible dissection of $S_\omega$ has to have the same number of $\gpoint$-arcs as that of $S$. Finally, if $\omega$ is separating (Case 6 in the proof of Proposition 1.11 in \cite{amiot2019complete}), then $S_\omega$ has two connected components $S_\omega^{(1)}$ and $S_\omega^{(2)}$ with combined genus, $g_\omega^{(1)} + g_\omega^{(2)}$, equal to $g$ and combined number of boundary components, $b_\omega^{(1)} + b_\omega^{(2)}$, equal to $b+1$. They also have an additional marked point together, in other words $|(M_\omega^{(1)})_{\gpoint}| + |(M_\omega^{(2)})_{\gpoint}| = |M_{\gpoint}|$. Therefore the number of $\gpoint$-arcs in a pair admissible dissections of $S_\omega^{(1)}$ and $S_\omega^{(2)}$ equals to $|(M_\omega^{(1)})_{\gpoint}| + |P_\omega^{(1)}| + b_\omega^{(1)} + 2g_\omega^{(1)} - 2 + |(M_\omega^{(2)})_{\gpoint}| + |P_\omega^{(2)}| + b_\omega^{(2)} + 2g_\omega^{(2)} - 2 = (|M_{\gpoint}| + 1) + |P| + (b + 1) + 2g - 4$, which is the same as for $S$.
\end{remark}

\begin{theorem}\label{TDecompositionsGoodCuts}
There is a one-to-one correspondence between semiorthogonal decompositions of $\Kb(\Lambda\proj)$ and good cuts of the marked surface associated to $\Lambda$.
\end{theorem}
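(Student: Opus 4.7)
The plan is to realize the claimed bijection via the intermediate step of bipartite admissible dissections, exploiting the preparatory results already established in this section. For the direction from good cuts to semiorthogonal decompositions, given a good cut $\Omega$, Proposition \ref{PDissectionGoodCut} produces a bipartite admissible dissection $(\Gamma_\Omega, p_\Omega)$ whose dividing arcs coincide with $\Omega$, and Proposition \ref{PBipartiteDissectionDecomposition} then attaches to it the semiorthogonal decomposition $\langle \mcL_{(\Gamma_\Omega, p_\Omega)}, \mcR_{(\Gamma_\Omega, p_\Omega)} \rangle$ of $\Kb(\Lambda\proj)$. The choices involved in constructing $(\Gamma_\Omega, p_\Omega)$, namely selecting admissible dissections of the individual connected components of the cut surface, are absorbed by Proposition \ref{PBipartiteDissectionsEquivalence}, according to which any two bipartite admissible dissections sharing the same dividing arcs induce the same semiorthogonal decomposition.

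For the reverse direction, I would take a semiorthogonal decomposition $\langle \mcL, \mcR \rangle$ and apply Theorem \ref{TProjectivesDecomposition} to the approximation triangles of the indecomposable projectives. The output is a family of string complexes with the distinctive property that all basis morphisms among them are boundary graph maps or singleton single maps of type~$(i)$, and this translates geometrically into the statement that the corresponding $\gpoint$-arcs form an admissible collection $(\Gamma, p)$ meeting only at endpoints and satisfying the bipartite condition, with $\mcR$-arcs following $\mcL$-arcs in the counter-clockwise order about every common endpoint. Proposition \ref{PBipartiteDissectionCompletion} extends $(\Gamma, p)$ to a bipartite admissible dissection $(\Gamma', p')$, and the dividing arcs of $(\Gamma', p')$ constitute a good cut by Proposition \ref{PDissectionGoodCut}.

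The central obstacle is the well-definedness of this reverse assignment, namely that the resulting good cut must not depend on the extension $(\Gamma', p')$. I would resolve this by showing that the induced decomposition $\langle \mcL_{(\Gamma', p')}, \mcR_{(\Gamma', p')} \rangle$ coincides with the original $\langle \mcL, \mcR \rangle$, from which independence then follows by Proposition \ref{PBipartiteDissectionsEquivalence}. The coincidence of decompositions is addressed in two coupled steps: first, to see $\mcL_{(\Gamma', p')} \subseteq \mcL$ and $\mcR_{(\Gamma', p')} \subseteq \mcR$, I would argue geometrically that the extension provided by Proposition \ref{PBipartiteDissectionCompletion} can be performed so that no newly added arc crosses a dividing arc of $(\Gamma, p)$, so that each new arc yields a string complex generated by those of $(\Gamma, p)$ with the same $p$-value, and these already lie in the correct side; second, for the reverse inclusions, any $X \in \mcL$ fits into a triangle $R' \to X \to L' \to R'[1]$ from $\langle \mcL_{(\Gamma', p')}, \mcR_{(\Gamma', p')} \rangle$, and since $R' \in \mcR_{(\Gamma', p')} \subseteq \mcR$ the first morphism must vanish, whence the triangle splits and the summand $R'[1]$ of $X$ lies in $\mcR \cap \mcL = 0$, giving $X \cong L' \in \mcL_{(\Gamma', p')}$ (with the dual argument for $\mcR$). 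Mutual inverseness of the two maps is then immediate: starting from a good cut $\Omega$, applying the decomposition construction followed by the reverse assignment yields a bipartite admissible dissection inducing the same decomposition as $(\Gamma_\Omega, p_\Omega)$, so its set of dividing arcs is $\Omega$ by Proposition \ref{PBipartiteDissectionsEquivalence}; the other composition is the equality of decompositions just established.
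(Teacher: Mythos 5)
Your overall architecture is the same as the paper's: bipartite admissible dissections as the intermediary, with Propositions~\ref{PDissectionGoodCut}, \ref{PBipartiteDissectionDecomposition}, \ref{PBipartiteDissectionCompletion} and \ref{PBipartiteDissectionsEquivalence} carrying the construction, and the good-cut-to-decomposition direction and the mutual-inverseness argument match the paper. The gap lies in your well-definedness step, specifically in how you obtain $\mcL_{(\Gamma',p')} \subseteq \mcL$ and $\mcR_{(\Gamma',p')} \subseteq \mcR$. You justify these by claiming the completion of $(\Gamma,p)$ can be arranged so that no newly added arc crosses a dividing arc of $(\Gamma,p)$, and that consequently each new arc gives a string complex generated by the same-valued arcs of $(\Gamma,p)$. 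But dividing arcs (Definition~\ref{DDividingArcs}), Lemma~\ref{LCrossingDividingArcs} and the generation argument of Proposition~\ref{PBipartiteDissectionDecomposition} are only available for bipartite admissible \emph{dissections}: they rest on every component of the complement being a disk with exactly one $\rpoint$-point, so that an arc avoiding dividing arcs is a concatenation of arcs of the dissection. At this stage $(\Gamma,p)$ is a priori only an admissible collection (you also gloss over admissibility itself: the paper first removes redundant arcs so that no component of the complement is free of $\rpoint$-points); for a non-maximal collection, an arc added inside a component containing several $\rpoint$-points need not lie in the subcategory generated by the collection's arcs of one part at all — it is generated by both parts together, since the $L_i$ and $R_i$ generate $\Kb(\Lambda\proj)$, but that is not what your inclusion needs. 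The paper does show that $(\Gamma,p)$ is already a dissection, but only \emph{after}, and by means of, the equality $\langle\mcL,\mcR\rangle=\langle\mcL_{(\Gamma',p')},\mcR_{(\Gamma',p')}\rangle$ (uniqueness of approximation triangles plus Proposition~\ref{PBipartiteDissectionTriangles}), so invoking that fact here would be circular.

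The repair is to run the comparison in the opposite order, as the paper does. The indecomposable summands of the $L_i$ and $R_i$ generate subcategories $\mcL_0\subseteq\mcL$ and $\mcR_0\subseteq\mcR$ which together generate $\Kb(\Lambda\proj)$ (each $P_i$ is an extension of $L_i$ by $R_i$) and satisfy $\Hom(\mcR_0,\mcL_0)=0$, hence form a semiorthogonal decomposition by Proposition~\ref{PSemiorthogonalDecomposition}; two nested semiorthogonal decompositions with termwise inclusions coincide — your own splitting argument proves exactly this (note in passing that the split triangle exhibits $R'[1]$ as a summand of $L'$, not of $X$). This gives $\mcL=\mcL_0\subseteq\mcL_{(\Gamma',p')}$ and $\mcR=\mcR_0\subseteq\mcR_{(\Gamma',p')}$, since the arcs of the $L_i$ and $R_i$ sit in $\Gamma'$ with the appropriate value of $p'$, and one more application of the same splitting argument upgrades the inclusions to equalities. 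With the equality in hand, independence of the choice of $(\Gamma',p')$, and the bijectivity of the two assignments, follow exactly as you state from Propositions~\ref{PBipartiteDissectionsEquivalence} and \ref{PDissectionGoodCut}.
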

\begin{proof}
For the purposes of this proof, we denote $\mcS \mcO$ the set of semiorthogonal decompositions of $\Kb(\Lambda \proj)$ and $\mcG \mcC$ the set of cuts of $(S, M, P)$.

At first, we construct a map $F\colon \mcS \mcO \to \mcG \mcC$. Let us have $\langle \mcL, \mcR \rangle$, a semiorthogonal decomposition of $\Kb(\Lambda\proj)$. Consider the following approximation of $R_i \to P_i \to L_i \to R_i[1]$ such that and $R_i \in \mcR$, $L_i \in \mcL$, and $P_i$ is one of the indecomposable direct summands of $\Lambda$. Theorem \ref{TProjectivesDecomposition} paired with Lemma 3.10 in \cite{opper2018geometric} yields that: $$R_i \cong \bigoplus_{j} P_{(\varrho_j^{(i)}, g_j^{(i)})} \, \, \, \mbox{and} \, \, \, L_i \cong \bigoplus_{j'} P_{(\lambda_{j'}^{(i)}, \ell_{j'}^{(i)})}$$ for $\gpoint$-arcs $\varrho_j^{(i)}$ and $\lambda_{j'}^{(i')}$; these $\gpoint$-arcs meet only at endpoints, and that $\varrho_j^{(i)}$ may meet $\lambda_{j'}^{(i')}$ at a $\gpoint$-point only if $\varrho_j^{(i)}$ follows $\lambda_{j'}^{(i')}$ in the counter-clockwise order around a common endpoint.

These $\gpoint$-arcs gives rise to a bipartite collection $(\Gamma, p)$ with $p(\lambda_{j'}^{(i')}) = 1$ and $p(\varrho_j^{(i)}) = 2$ by possibly removing redundant $\gpoint$-arcs so that each connected component of its complement in contains at least one $\rpoint$-point.

It easy to observe that the removed $\gpoint$-arcs are simply concatenations of the remaining $\gpoint$-arcs, and the objects corresponding to the removed $\gpoint$-arcs can be generated using the objects corresponding to the remaining $\gpoint$-arcs for both parts respectively. This holds because if there is a component of of the complement of without a $\rpoint$-marked point, then all $\gpoint$-arcs on its boundary have to be either $\lambda_{j'}^{(i')}$ or $\varrho_j^{(i)}$ as discussed in the proof of Proposition \ref{PBipartiteDissectionDecomposition}.

Proposition \ref{PBipartiteDissectionCompletion} assures existence of a bipartite admissible dissection $(\Gamma', p')$ extending $(\Gamma, p)$. If $\langle \mcL_{(\Gamma', p')}, \mcR_{(\Gamma', p')} \rangle$ is the semiorthogonal decomposition of $\Kb(\Lambda \proj)$ associated to $(\Gamma', p')$ by Proposition \ref{PBipartiteDissectionDecomposition}, we obtain that $\mcL \subseteq \mcL_{(\Gamma', p')}$ and $\mcR \subseteq \mcR_{(\Gamma', p')}$ necessitating that the inclusions are equalities.

We note that $(\Gamma, p)$ must have already been a bipartite admissible dissection. Suppose that we have a $\rpoint$-arc $\psi$. Since the $\gpoint$-arcs $\sigma_i$ corresponding to indecomposable projectives $P_i$ form an admissible dissection, there exists $\sigma_i$ such that $\psi$ and $\sigma_i$ intersect. The uniqueness of approximation triangles in $\langle \mcL, \mcR \rangle = \langle \mcL_{(\Gamma', p')}, \mcR_{(\Gamma', p')} \rangle$ per Proposition \ref{PSemiorthogonalDecomposition} means that we can use Proposition \ref{PBipartiteDissectionTriangles} to show that $\sigma_i$ homotopic to an iterated concatenation of $\gpoint$-arcs $\lambda_{j'}^{(i)}$ and $\varrho_{j}^{(i)}$. It is an easy consequence that $\psi$ has to cross some of these $\gpoint$-arcs. We can therefore conclude that $(\Gamma, p)$ separates all $\rpoint$-points, and so it is a bipartite admissible dissection. 

We set $F(\langle \mcL, \mcR \rangle)$ equal to the set of dividing arcs $(\Gamma', p')$. The bipartite admissible dissection $(\Gamma', p')$ may not be unique, but all possible choices of $(\Gamma', p')$ have the same dividing arcs. Therefore, the map $F\colon \mcS \mcO \to \mcG \mcC$ is well defined.\\\\
Conversely, we construct a map $G: \mcG \mcC \to \mcS \mcD$. Take a good cut $\Omega$ of the underlying marked surface. By Proposition \ref{PDissectionGoodCut}, there exists a bipartite admissible dissection $(\Gamma', p')$ such that its set of dividing arcs equals $\Omega$. By Propostion \ref{PBipartiteDissectionDecomposition}, we can associate a semiorthogonal decomposition $G(\Omega)$ of $\Kb(\Lambda \proj)$ to $\Omega$. Since two bipartite admissible dissections with same dividing arcs give rise to the same semiorthogonal decomposition by Proposition \ref{PBipartiteDissectionsEquivalence}, the map $G: \mcG \mcC \to \mcS \mcD$ is well-defined.\\\\
Proposition \ref{PDissectionGoodCut} gives us that two bipartite admissible dissections give rise to the same semiorthogonal decomposition by Proposition \ref{PBipartiteDissectionsEquivalence} if and only if they have the same dividing arcs. This fact is then used to assure that $F$ and $G$ are mutually inverse bijections.
\end{proof}

\subsection{Semiorthogonal decompositions of \texorpdfstring{$\Db(\Lambda \modl)$}{Db(Lambda-mod)} and \texorpdfstring{$\Kb(\Lambda \proj)$}{Kb(Lambda-proj)}}
Finally, we prove that there is a bijective correspondence between semiorthogonal decompositions of $\Db(\Lambda \modl)$ and semiorthogonal decompositions of $\Kb(\Lambda \proj)$, which goes from left to right by restriction. This results represents the final step in establishing the bijective correspondence between semiorthogonal decompositions of $\Db(\Lambda \modl)$ and good cuts of the underlying marked surface $(S, M, P)$.

\begin{theorem}\label{TSemiorthogonalDecompositionRestriction}
A semiorthogonal decomposition of $\Db(\Lambda\modl)$ restricts to a semiorthogonal decomposition of $\Kb(\Lambda \proj)$. On the other hand, a semiorthogonal decomposition of $\Kb(\Lambda \proj)$ can be uniquely extended to a semiorthogonal decomposition of $\Db(\Lambda\modl)$.
\end{theorem}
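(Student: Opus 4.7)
The theorem splits into two reciprocal assertions, which I would treat in turn.

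For the \emph{restriction}, given a semiorthogonal decomposition $\langle \mcL, \mcR \rangle$ of $\Db(\Lambda\modl)$, I would set $\mcL_0 := \mcL \cap \Kb(\Lambda\proj)$ and $\mcR_0 := \mcR \cap \Kb(\Lambda\proj)$. Semiorthogonality is inherited, so the only substantive point is producing approximation triangles that live in $\Kb(\Lambda\proj)$. Theorem \ref{TProjectivesDecomposition} delivers exactly this for every indecomposable projective $P(i)$, since both outer terms of its $\langle \mcL, \mcR \rangle$-approximation are direct sums of string complexes and hence lie in $\Kb(\Lambda\proj)$. The full subcategory of those $X \in \Kb(\Lambda\proj)$ whose $\langle \mcL, \mcR \rangle$-approximation triangle lies in $\Kb(\Lambda\proj)$ is closed under extensions (by the octahedral axiom combined with the approximation triangles of the outer terms of an extension triangle), shifts, and summands; containing all indecomposable projectives, it must coincide with $\Kb(\Lambda\proj)$.

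For the \emph{existence of the extension}, given $\langle \mcL, \mcR \rangle$ on $\Kb(\Lambda\proj)$ I would define $\tilde{\mcL} := \mcR^{\perp}$ and $\tilde{\mcR} := {}^{\perp}\mcL$, orthogonals taken in $\Db(\Lambda\modl)$. These are triangulated subcategories containing $\mcL$ and $\mcR$ respectively, with $\tilde{\mcL} \cap \Kb(\Lambda\proj) = \mcL$ and $\tilde{\mcR} \cap \Kb(\Lambda\proj) = \mcR$ by definition. To show $\langle \tilde{\mcL}, \tilde{\mcR} \rangle$ is itself a semiorthogonal decomposition I would invoke Krull-Schmidt and supply an approximation triangle for every indecomposable: string and band complexes already have one coming from $\langle \mcL, \mcR \rangle$ on $\Kb(\Lambda\proj)$, and the substantive case --- the main obstacle --- is an infinite string complex $I$ with $\gpoint$-infinite arc $\iota$. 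Let $\Omega$ be the good cut associated to $\langle \mcL, \mcR \rangle$ by Theorem \ref{TDecompositionsGoodCuts}: every dividing arc $\omega \in \Omega$ has both endpoints on the boundary while $\iota$ spirals into a $\rpoint$-puncture, so $\iota$ meets $\Omega$ only finitely many times and its infinite tail is trapped inside a single connected component of $S \setminus \Omega$, whose dissection arcs all belong to one part of the bipartite dissection. An infinite-tail analogue of the iterated-concatenation procedure in Proposition \ref{PBipartiteDissectionTriangles} then realises $I$ as an iterated extension of finitely many string complexes (from the finite piece of $\iota$) together with one infinite string complex (from the tail); each piece lies uniformly in $\tilde{\mcL}$ or $\tilde{\mcR}$ according to the $p$-value of the component its arc belongs to, and grouping them by $p$-value yields the desired approximation triangle $R_I \to I \to L_I$. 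The semiorthogonality $\Hom_\Db(\tilde{\mcR}, \tilde{\mcL}) = 0$ follows from the same geometric picture, since any non-zero basis morphism between objects on opposite sides of the cut would come from an intersection or shared endpoint that forces a crossing of $\Omega$.

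For the \emph{uniqueness of the extension}, any other semiorthogonal decomposition $\langle \mcL', \mcR' \rangle$ extending $\langle \mcL, \mcR \rangle$ automatically satisfies $\mcL' \subseteq \mcR^{\perp} = \tilde{\mcL}$ and $\mcR' \subseteq {}^{\perp}\mcL = \tilde{\mcR}$, from $\mcL \subseteq \mcL'$, $\mcR \subseteq \mcR'$ and $\Hom_\Db(\mcR', \mcL') = 0$. For the reverse inclusions I would pick $X \in \tilde{\mcL}$ and decompose it via $\langle \mcL', \mcR' \rangle$ as $R' \to X \to L'$; since $R' \in \mcR' \subseteq \tilde{\mcR}$ and $X \in \tilde{\mcL}$ are semiorthogonal by the previous paragraph, the map $R' \to X$ vanishes, the triangle splits, and $R'[1]$ (hence $R'$) lies in $\mcL' \cap \mcR' = 0$. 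Thus $X \cong L' \in \mcL'$, giving $\tilde{\mcL} \subseteq \mcL'$; the dual argument gives $\tilde{\mcR} \subseteq \mcR'$, completing uniqueness.
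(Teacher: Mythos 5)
Your first half (restriction to $\Kb(\Lambda\proj)$) is correct and is essentially the paper's argument: Theorem \ref{TProjectivesDecomposition} forces the approximations of the indecomposable projectives to be string complexes, hence perfect, and then either your thick-subcategory closure argument or the paper's appeal to generation by $\Lambda$ finishes it. The problem is in the extension half. You set $\tilde{\mcL}:=\mcR^{\perp}$ and $\tilde{\mcR}:={}^{\perp}\mcL$ and everything downstream --- both the claim that $\langle\tilde{\mcL},\tilde{\mcR}\rangle$ is a semiorthogonal decomposition and your uniqueness paragraph --- rests on the statement $\Hom_{\Db(\Lambda\modl)}({}^{\perp}\mcL,\mcR^{\perp})=0$, together with the claim that each tail object $P_{(\pi_t,d_t)}$ lies in the orthogonal prescribed by the part of the arcs bounding its component. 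Neither is proved. These orthogonals are defined by a vanishing condition, not by a set of generating arcs, so you cannot argue that a nonzero basis morphism between their objects ``would force a crossing of $\Omega$'' until you have classified which indecomposables of $\Db(\Lambda\modl)$ actually lie in $\mcR^{\perp}$ and ${}^{\perp}\mcL$: a priori they could contain infinite string objects whose arcs cross the cut, band objects winding around a puncture, or the tail object of the wrong part. Ruling these out is precisely the technical core of the paper's proof of this theorem (the analysis of shared endpoints and counter-clockwise order against the boundary arcs of $\mcP_t$, the grading obstruction showing that no graded closed curve can wind around a $\rpoint$-puncture, the exclusion of infinite arcs leaving $\mcP_t$), carried out there against an arbitrary extension $\langle\tilde{\mcL},\tilde{\mcR}\rangle$; your one-sentence appeal to ``the same geometric picture'' presupposes exactly this classification and is circular as written.

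Two smaller points. First, the ``infinite-tail analogue'' of Proposition \ref{PBipartiteDissectionTriangles} that you invoke to produce a two-term approximation triangle for an infinite string complex is itself unproved (the mapping-cone bookkeeping of Lemma \ref{LMappingConeReduction} and Theorem 4.1 of \cite{opper2018geometric} is only available for finite arcs); note that by Proposition \ref{PSemiorthogonalDecomposition} mere generation by the pieces would suffice once semiorthogonality is known, which is how the paper proceeds, so this part can be repaired cheaply --- but only after the Hom-vanishing above is established. Second, once some extension $\langle\mcL',\mcR'\rangle$ is known to exist (the paper builds it explicitly from $\mcL$, $\mcR$ and the arcs $\pi_t$), your identities $\mcR^{\perp}=\mcL'$ and ${}^{\perp}\mcL=\mcR'$ are equivalent to the missing classification, so your route does not avoid that work; it only repackages it.
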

\begin{proof}
At first, we prove that a semiorthogonal decomposition of $\Db(\Lambda\modl)$ restricts to a semiorthogonal decomposition of $\Kb(\Lambda \proj)$. The first part of the argument is similar to the argument made in the proof of Theorem \ref{TDecompositionsGoodCuts}, when constructing the map from semiorthogonal decompositions of $\Kb(\Lambda \proj)$ to the good cuts of the underlying marked surface.

Suppose that $\langle \mcL, \mcR \rangle$ is a semiorthogonal decomposition of $\Db(\Lambda\modl)$. Consider the following approximation of $R_i \to P_i \to L_i \to R_i[1]$ such that and $R_i \in \mcR$, $L_i \in \mcL$, and $P_i$ is one of the indecomposable direct summands of $\Lambda$. Theorem \ref{TProjectivesDecomposition} paired with Lemma 3.10 in \cite{opper2018geometric} yields that: $$R_i \cong \bigoplus_{j} P_{(\varrho_j^{(i)}, g_j^{(i)})} \, \, \, \mbox{and} \, \, \, L_i \cong \bigoplus_{j'} P_{(\lambda_{j'}^{(i')}, \ell_{j'}^{(i')})}$$ for $\gpoint$-arcs $\varrho_j^{(i)}$ and $\lambda_{j'}^{(i')}$; these $\gpoint$-arcs meet only at endpoints, and that $\varrho_j^{(i)}$ may meet $\lambda_{j'}^{(i')}$ at a $\gpoint$-point only if $\varrho_j^{(i)}$ follows $\lambda_{j'}^{(i')}$ in the counter-clockwise order around a common endpoint.

Clearly, $\mcL \cap \Kb(\Lambda \proj)$ contains all objects corresponding to $\lambda_{j'}^{(i')}$, and $\mcR \cap \Kb(\Lambda \proj)$ contains all objects corresponding to $\varrho_j^{(i)}$. Also, there are no homomorphisms from $\mcR \cap \Kb(\Lambda \proj)$ to $\mcR \cap \Kb(\Lambda \proj)$. Because $\mcL \cap \Kb(\Lambda \proj)$ and $\mcR \cap \Kb(\Lambda \proj)$ generate $\Kb(\Lambda \proj)$ via its generator $\Lambda$, which is the direct sum of all indecomposable projectives $P_i$, they form a semiorthogonal decomposition. Therefore, $\langle \mcL, \mcR \rangle$ restricts to a semiorthogonal decomposition of $\Kb(\Lambda \proj)$.

Now, it remains to be proved that a semiorthogonal decomposition of $\Kb(\Lambda \proj)$ can be uniquely extended to a semiorthogonal decomposition of $\Db(\Lambda\modl)$.

Let $\langle \mcL, \mcR \rangle$ be a semiorthogonal decomposition of $\Kb(\Lambda \proj)$. Our strategy is to find enough addditonal indecomposable objects in $\Db(\Lambda\modl)$ to add to $\mcL$ and $\mcR$, so that enriched $\mcL$ and $\mcR$ generated the bounded derived category, but there are still no morphisms from the enriched $\mcR$ to enriched $\mcL$ (cf. Proposition \ref{PSemiorthogonalDecomposition}). This enrichment will be constructed in a way that will assure its uniqueness.

The indecomposable objects of $\Db(\Lambda\modl)$ that are not in $\Kb(\Lambda \proj)$; these are exactly objects corresponding to infinite string complexes. In the geometric model, infinite string complexes are represented by arcs between a $\gpoint$-point and a $\rpoint$-puncture or two $\rpoint$-punctures (Theorem 2.12 and Remark 1.20 in \cite{opper2018geometric}). Such arcs infinitely wrap around the $\rpoint$-puncture which is its endpoint in the counter-clockwise direction.

Using Theorem \ref{TDecompositionsGoodCuts}, we can find a bipartite admissible dissection $(\Gamma, p)$ of the underlying marked surface $(S, M, P)$ that gives $\langle \mcL, \mcR \rangle$ by Proposition \ref{PBipartiteDissectionDecomposition}.

Consider the complement of $\Gamma$ in $S \, \backslash \, P$. Since $\Gamma$ is an admissible dissection of the marked surface, each $\rpoint$-puncture lies in a component of the complement of $\Gamma$ in $S \, \backslash \, P$ whose boundary is composed solely of arcs in $\Gamma$ and their endpoints (Proposition 1.12 in \cite{amiot2019complete}). It follows from the fact that $(\Gamma, p)$ is a bipartite admissible dissection that all arcs enclosing a component with a $\rpoint$-puncture need to belong to the same part.

For each $\rpoint$-puncture $t \in P_{\rpoint}$, we choose a $\gpoint$-endpoint of an $\gpoint$-arc in $\Gamma$ that lies on the boundary of the connected component $\mcP_t$ of the complement of $\Gamma$ in $S \backslash P$ pertaining to $t$, and we consider a $\gpoint$-infinite arc $\pi_t$ from this $\gpoint$-endpoint that infinitely wraps around the $\rpoint$-puncture $t$ in the counterclockwise direction and is contained within $\mcP_t$.

We now observe that every $\gpoint$-infinite arc on the marked surface is a concatenation of a $\gpoint$-arc and at most two arcs $\pi_s, \pi_t$ for $\rpoint$-punctures $t, s \in P$. This decomposition is induced simply by taking the complement of $\Gamma$ in $S \backslash P$.

Consider $\mcL'$ and $\mcR'$ where $\mcL'$ and $\mcR'$ are generated by $\mcL$ and $\mcR$ and $\pi_t$ such that $\mcP_t$ is enclosed by $\gpoint$-arcs in the first part and the second part, respectively. Our observation above can be combined with Theorem 2.1 in \cite{opper2018geometric} and Proposition \ref{PBipartiteDissectionDecomposition} to yield that the triangulated category generated by $\mcL'$ and $\mcR'$ contains all infinite string objects string objects of $\Db(\Lambda \modl)$. The triangulated category generated by $\mcL'$ and $\mcR'$ equals $\Db(\Lambda \modl)$ because it contains all the indecomposable objects thereof.

At this stage, we can extend a semiorthogonal decomposition of $\Kb(\Lambda \proj)$; finally, we will demonstrate this extension is unique.

Suppose that $\langle \tilde{\mcL}, \tilde{\mcR} \rangle$ is semiorthogonal decomposition of $\Db(\Lambda \modl)$ that extends $\langle \mcL, \mcR \rangle$. For each $\rpoint$-puncture $t \in P$, we consider the approximation $R_{(\pi_t, d_t)} \to P_{(\pi_t, d_t)} \to L_{(\pi_t, d_t)} \to R_{(\pi_t, d_t)}[1]$ of $P_{(\pi_t, d_t)}$ for some grading $d_t$ that exists by Proposition \ref{PSemiorthogonalDecomposition}. Our aim is to show that this approximation is trivial, and $P_{(\pi_t, d_t)} = L_{(\pi_t, d_t)}$ or $P_{(\pi_t, d_t)} = R_{(\pi_t, d_t)}$ based on whether the border of $\mcP_t$ consists of $\gpoint$-arcs in $\Gamma$ in the first part or the second part, respectively. This will assure that $\langle \tilde{\mcL}, \tilde{\mcR} \rangle = \langle \mcL', \mcR' \rangle$.

Let us have a $\rpoint$-puncture $t \in P$ and suppose that all the objects corresponding to arcs on the boundary of $\mcP_t$ belong to $\mcL$. To prove that  $P_{(\pi_t, d_t)} = L_{(\pi_t, d_t)}$, it is enough to show that there is no non-trivial morphism $f\colon R \to P_{(\pi_t, d_t)}$ for any $R \in \mcR$. Without loss of generality, we may assume that $R$ is indecomposable and that $R$ is isomorphic to $P_{(\sigma, h)}$, for $\sigma$, either a $\gpoint$-arc, an $\gpoint$-infinite arc, or an arc representing an infinite string complex, with some grading $h$, or $P_{(\sigma, h, M)}$, for a closed arc $\sigma$ representing a one-dimensional band complex with grading $h$ and $M$ the associated isomorphism class of $k[x]$-modules (cf. Theorem \ref{TIndecomposablesStringsBands} and Theorem 2.12 in \cite{opper2018geometric}).

By Theorem 3.3 in \cite{opper2018geometric}, the map $f$ corresponds to an intersection of $\pi_t$ and $\sigma$ or a common endpoint provided that $P_{(\sigma, h)} \in \Kb(\Lambda \proj)$. If $\pi_t$ and $\sigma$ shared a $\gpoint$-endpoint inducing a map $P_{(\sigma, h)} \to P_{(\pi_t, d_t)}$, it would mean that $\sigma$ would precede $\pi_t$ in the counter-clockwise order around it. However, as $\pi_t$ is succeeded in the counter-clockwise order around its $\gpoint$-endpoint by a $\gpoint$-arc $\gamma \in \Gamma$, part of boundary of $\mcP_t$. This would yield an existence of a map $P_{\sigma, h} \to P_{(\gamma, c)}$ for some grading $c$ (Remark 3.8 in \cite{opper2018geometric}), a contradiction given that $P_{(\gamma, c)} \in \tilde{\mcL}$.

So any potential map must come from an intersection of $\sigma$ and $\pi_t$ in the interior of the underlying marked surface, and, up to homotopy, $\sigma$ needs to be contained in $\mcP_t$; otherwise, $\sigma$ would have an intersection with a $\gpoint$-arc $\gamma \in \Gamma$ on the boundary of $\mcP_t$, which would imply an existence of a map $R \to P_{(\gamma, c)}$ for some grading $c$, a contradiction given that $P_{(\gamma, c)} \in \tilde{\mcL}$.

We can assume that $\sigma$ lies in $\mcP_t$. If $\sigma$ is a $\gpoint$-arc, this guarantees that it has a $\gpoint$-endpoint on the boundary of $\mcP_t$. However, there needs to be an arc $\gamma$ on the boundary of $\mcP_t$ that follows $\sigma$ in the counter-clockwise order around the $\gpoint$-endpoint implying an existence of a map $R \to P_{(\gamma, c)}$ for some grading $c$, a contradiction as $P_{(\gamma, c)} \in \tilde{\mcL}$. If $\sigma$ is a closed arc, it is, up to homotopy, a loop that winds around $t$ a given number of times as $\mcP_t$ is homotopy equivalent to a circle whose fundamental group is $\Z$. Such a closed arc cannot be equipped with a grading to yield an object of $\Db(\Lambda \modl)$ as each time a loop winds around $t$ the grading decreases or increases by more than $1$ (Definition 2.10 and Remark 2.11 in \cite{opper2018geometric}):

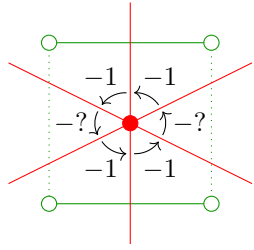
\begin{figure}[H]
	\centering
	
	\begin{tikzpicture}[x = 1cm, y = 1cm]
	
	    \pgfmathsetmacro{\sides}{4}
	    \pgfmathsetmacro{\sideshalf}{\sides/2}
	
        \node[regular polygon, regular polygon sides = 4, minimum size = 3cm] (Pol) at (0,0) {};
        
        \foreach \x in {1,...,\sides}
        {\node[coordinate, name = {PolNode\x}] at (Pol.corner \x) {};}
        
        \foreach \x in {1,...,\sideshalf}
        {\pgfmathtruncatemacro{\xstart}{2*\x - 1}
         \pgfmathtruncatemacro{\xend}{2*\x}
        \draw[dark-green] ({PolNode\xstart}) -- ({PolNode\xend}) node[coordinate, midway, name = {PolNodeAux\xstart}] {} ;}
        
        \foreach \x in {1,...,\sideshalf}
        {\pgfmathtruncatemacro{\xstart}{2*\x}
         \pgfmathtruncatemacro{\xend}{mod(2*\x + 1, \sides)}
        \draw[dark-green, dotted] ({PolNode\xstart}) -- ({PolNode\xend}) node[coordinate, near start, name = {PolNodeAuxRight\xstart}] {} node[coordinate, near end, name = {PolNodeAuxLeft\xstart}] {};}
        
        \foreach \x in {1,...,\sides}
        {\draw[dark-green, fill = white] ({PolNode\x}) circle (0.1);}
        
        \node[coordinate, name = Puncture] at (0,0) {};
        
        \draw[red, fill = red] (Puncture) circle (0.1);
        
        \pgfmathsetmacro{\redlinefactor}{1.5}
        
        \foreach \x in {1,...,\sideshalf}
        {\pgfmathtruncatemacro{\xstart}{2*\x - 1}
        \draw[red] (Puncture) -- ($\redlinefactor*({PolNodeAux\xstart})$) node[coordinate, near start, name = {GradingAux\xstart}] {};}
        
        \foreach \x in {1,...,\sideshalf}
        {\pgfmathtruncatemacro{\xstart}{2*\x}
        \draw[red] (Puncture) -- ($\redlinefactor*({PolNodeAuxLeft\xstart})$) node[coordinate, near start, name = {GradingAuxLeft\xstart}] {};}
        
        \foreach \x in {1,...,\sideshalf}
        {\pgfmathtruncatemacro{\xstart}{2*\x}
        \draw[red] (Puncture) -- ($\redlinefactor*({PolNodeAuxRight\xstart})$) node[coordinate, near start, name = {GradingAuxRight\xstart}] {};}
        
        \pgfmathsetmacro{\gradingfactor}{1.7}
        
        \foreach \x in {1,...,\sideshalf}
        {\pgfmathtruncatemacro{\xstartone}{2*\x}
        \pgfmathtruncatemacro{\xstarttwo}{mod(2*\x + 1, \sides)}
        \path ({GradingAuxLeft\xstartone}) edge[->, bend right, shorten >= 0.05cm, shorten <= 0.05cm] node[midway, name = {LabelAuxLeft\xstartone}] {} ({GradingAux\xstarttwo});
        \path (Puncture) -- ($\gradingfactor*({LabelAuxLeft\xstartone})$) node[at end] {$-1$};}
        
        \foreach \x in {1,...,\sideshalf}
        {\pgfmathtruncatemacro{\xstart}{2*\x}
        \path ({GradingAuxRight\xstart}) edge[->, bend right, shorten >= 0.05cm, shorten <= 0.05cm] node[midway, name = {LabelAuxRight\xstart}] {} ({GradingAuxLeft\xstart});
        \path (Puncture) -- ($\gradingfactor*({LabelAuxRight\xstart})$) node[at end] {$-?$};}
        
        \foreach \x in {1,...,\sideshalf}
        {\pgfmathtruncatemacro{\xstartone}{2*\x - 1}
        \pgfmathtruncatemacro{\xstarttwo}{2*\x}
        \path ({GradingAux\xstartone}) edge[->, bend right, shorten >= 0.05cm, shorten <= 0.05cm] node[midway, name = {LabelAux\xstartone}] {} ({GradingAuxRight\xstarttwo});
        \path (Puncture) -- ($\gradingfactor*({LabelAux\xstartone})$) node[at end] {$-1$};}
        
    \end{tikzpicture}
			
	\label{FGradingAroundPucnture}
	\caption{Changes in grading for graded curves in $\mcQ_t$}
\end{figure}
This can be observed by taking the admissible dissection $\Delta$ of the underlying marked surface that encodes the structure of $\Lambda$. The $\rpoint$-point puncture $t$ is enclosed in a connected component $\mcQ_t$ of the complement of $\Delta$ in $S \backslash P$ by $\gpoint$ $\delta_1, \dots, \delta_n \in \Delta$ that follow each other in the counter-clockwise order around $t$. In the dual admissible $\Delta^*$ (cf. Proposition \ref{PAdmissibleDissectionProperties}), there is a $\rpoint$-arc $\delta^*_i$ for each $\delta_i$ such that crossing two following dual $\rpoint$-point arcs in the counter-clockwise order decreases the grading by 1 and vice versa by Definition \ref{DGradedCurves}. A loop that winds around $t$ in $\mcP_t$ can be assumed to lie in $\mcQ_t$ as well. So every time, this loop winds around its grading would increase or decrease by $n \geq 1$; such a loop cannot represent a one-dimensional band object as it is not possible to grade it properly.

Lastly, $\sigma$ may be a $\gpoint$-infinite arc. By our discussion above, $\sigma$ needs to be contained in $\mcP_t$, otherwise crossing a boundary $\gpoint$-arc of $\mcP_t$ yielding a morphism from $\tilde{\mcR}$ to $\tilde{\mcL}$. In this instance, however, it has have a $\gpoint$-endpoint on the boundary of $\mcP_t$ and a boundary $\gpoint$-arc of $\mcP_t$ following it in the counter-clockwise order around the $\gpoint$-enpoint, which is also not permitted by our discussion above.

The other case, when the boundary of $\mcP_t$ is formed by $\gpoint$-arcs in $\Gamma$ that give rise to objects in $\mcR$, is discussed dually.
\end{proof}

\begin{theorem}\label{TDecompositionsGoodCuts2}
There is a one-to-one correspondence between semiorthogonal decompositions of $\Db(\Lambda\modl)$ and good cuts of the marked surface associated to $\Lambda$.
\end{theorem}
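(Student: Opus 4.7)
The plan is to derive this theorem as a direct composition of the two main results proved in the preceding subsections, namely Theorem~\ref{TDecompositionsGoodCuts} and Theorem~\ref{TSemiorthogonalDecompositionRestriction}. The first of these already supplies a bijection between semiorthogonal decompositions of $\Kb(\Lambda\proj)$ and good cuts of the marked surface associated to $\Lambda$, so it remains only to upgrade this bijection from $\Kb(\Lambda\proj)$ to $\Db(\Lambda\modl)$, which is precisely what Theorem~\ref{TSemiorthogonalDecompositionRestriction} accomplishes.

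First I would define a map from semiorthogonal decompositions of $\Db(\Lambda\modl)$ to good cuts by taking the restriction to $\Kb(\Lambda\proj)$ (well-defined by the first half of Theorem~\ref{TSemiorthogonalDecompositionRestriction}) and then applying the correspondence of Theorem~\ref{TDecompositionsGoodCuts}. In the other direction, I would send a good cut to its associated semiorthogonal decomposition of $\Kb(\Lambda\proj)$ via Theorem~\ref{TDecompositionsGoodCuts}, and then extend it uniquely to a semiorthogonal decomposition of $\Db(\Lambda\modl)$, as guaranteed by the second half of Theorem~\ref{TSemiorthogonalDecompositionRestriction}.

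To show these maps are mutually inverse, I would invoke the uniqueness assertion in Theorem~\ref{TSemiorthogonalDecompositionRestriction}: starting with a semiorthogonal decomposition $\langle\mcL,\mcR\rangle$ of $\Db(\Lambda\modl)$, restricting to $\Kb(\Lambda\proj)$, and then extending, the uniqueness forces us to return to $\langle\mcL,\mcR\rangle$. Conversely, starting with a semiorthogonal decomposition of $\Kb(\Lambda\proj)$, extending to $\Db(\Lambda\modl)$, and restricting back clearly recovers the original decomposition because the added infinite-string generators lie outside $\Kb(\Lambda\proj)$. Composing with the bijection of Theorem~\ref{TDecompositionsGoodCuts} then yields the desired one-to-one correspondence.

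This proof is essentially formal once the two component theorems are in hand; the real work was done earlier. I do not expect any significant obstacle here, beyond taking care that the extension in Theorem~\ref{TSemiorthogonalDecompositionRestriction} is genuinely functorial in the sense that restriction after extension returns the original decomposition unchanged, which is immediate since the infinite-string objects added to $\mcL$ and $\mcR$ are not objects of $\Kb(\Lambda\proj)$.
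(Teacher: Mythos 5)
Your proposal is correct and coincides with the paper's own proof, which states exactly that the theorem is a simple combination of Theorem~\ref{TSemiorthogonalDecompositionRestriction} and Theorem~\ref{TDecompositionsGoodCuts}. Your extra care about the two composites being mutually inverse (via the uniqueness of the extension, and the fact that restricting the extension returns the original $\Kb(\Lambda\proj)$-decomposition, which if one wants full rigour follows from $\mcL=\mcR^{\bot}$ and $\mcR={}^{\bot}\mcL$ inside $\Kb(\Lambda\proj)$) only makes explicit what the paper leaves implicit.
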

\begin{proof}
This a simple combination of Theorem \ref{TSemiorthogonalDecompositionRestriction} and Theorem \ref{TDecompositionsGoodCuts}.
\end{proof}

\subsection{Semiorthogonal decompositions of \texorpdfstring{$\Db(\Lambda \modl)$}{Db(Lambda-mod)} with multiple terms}
We conclude this section by a discussion on how the results on characterization of two-term semiorthogonal decompositons in Theorem \ref{TDecompositionsGoodCuts2} extends to semiorthogonal decomposition of $\Db(\Lambda \modl)$ with more than two terms.

\begin{definition}\label{DProperSequenceCuts}
Let $(S, M, P)$ be marked surface. A sequence of good cuts $(\Omega_1, \dots, \Omega_n)$, is called a \textit{proper} if the good cuts are pair-wise different up to homotopy and if for every $1 \leq i \leq n-1$:
\begin{enumerate}[(i)]
    \item each $\omega \in \Omega_{i+1}$ that lies on a connected component of the complement of $\Omega_{i}$ in $S \backslash P$ with right added marked points is homotopic to a component of the boundary thereof;
    \item each $\omega \in \Omega_{i}$ that lies on a connected component of the complement of $\Omega_{i_+1}$ in $S \backslash P$ with left added marked points is homotopic to a component of the boundary thereof.
\end{enumerate}
\end{definition}

\begin{proposition}\label{PProperSequenceDecomposition}
Let $(S, M, P)$ be marked surface, and let $(\Omega_1, \Omega_2)$ be a proper sequence of good cuts. Denote $\langle \mcL_i, \mcR_i \rangle$ the semiorthogonal decomposition of $\Db(\Lambda \modl)$ corresponding to $\Omega_i$ by Theorem \ref{TDecompositionsGoodCuts2}, for $i = 1, 2$. Then, $\mcL_1 \subsetneq \mcL_2$.
\end{proposition}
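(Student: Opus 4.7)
The plan is to deduce the inclusion $\mcL_1 \subseteq \mcL_2$ by checking generators geometrically and then to extract strictness from the bijection in Theorem~\ref{TDecompositionsGoodCuts2}. By Proposition~\ref{PDissectionGoodCut}, I would fix bipartite admissible dissections $(\Gamma_i, p_i)$ whose sets of dividing arcs are $\Omega_i$, for $i = 1,2$. By Proposition~\ref{PBipartiteDissectionDecomposition}, together with the extension to $\Db(\Lambda\modl)$ from Theorem~\ref{TSemiorthogonalDecompositionRestriction}, it suffices to check that every generator $P_{(\gamma,g)}$ of $\mcL_1$---whether coming from a $\gpoint$-arc $\gamma \in \Gamma_1$ with $p_1(\gamma)=1$ or from an appropriate $\gpoint$-infinite arc at a $\rpoint$-puncture of the $\mcL_1$-side---belongs to $\mcL_2$.

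Fix such a $\gamma$ and let $\mcP$ be the right-added component of the complement of $\Omega_1$ in $S\setminus P$ containing $\gamma$. After choosing all arcs in minimal position, I would argue that $\gamma$ does not cross any $\omega \in \Omega_2$: when $\omega$ lies entirely in $\mcP$, condition (i) of properness forces $\omega$ to be homotopic to a component of $\partial \mcP$, so a minimal representative is disjoint from the interior arc $\gamma\subset\mcP$; the alternative that $\omega$ exits $\mcP$ by crossing some $\omega' \in \Omega_1 \subset \partial \mcP$ is handled by a symmetric application of condition (ii), which ensures that any $\omega'\in\Omega_1$ lying in a left-added component of $\Omega_2$ can be isotoped to its boundary. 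Consequently $\gamma$ is disjoint from $\Omega_2$ in minimal position, and Lemma~\ref{LCrossingDividingArcs} places $P_{(\gamma,g)}$ in $\mcL_2 \cup \mcR_2$.

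To identify the correct side, I would next show that $\mcP$ sits inside a single component $\mcQ$ of the complement of $\Omega_2$, and that $\mcQ$ is right-added. The first assertion follows from the non-crossing conclusion together with condition (i), because then no $\Omega_2$-arc meaningfully cuts $\mcP$. For the second, supposing $\mcQ$ were left-added, condition (ii) applied to the $\Omega_1$-arcs forming $\partial \mcP$ inside $\mcQ$ would force each such arc to be homotopic to a component of $\partial \mcQ$; since each of these arcs is a dividing arc of a mixed component of $(\Gamma_1,p_1)$ and so genuinely separates $\gpoint$-arcs of different $p_1$-values, the resulting geometric picture is incompatible with $\mcP$ being a bona fide (nondegenerate) right-added component containing $\gamma$. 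Hence $\gamma \subset \mcP \subseteq \mcQ$ with $\mcQ$ right-added, and $P_{(\gamma,g)} \in \mcL_2$.

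Strictness is then formal: if $\mcL_1 = \mcL_2$, then $\mcR_1 = {}^\bot\mcL_1 = {}^\bot\mcL_2 = \mcR_2$ by Proposition~\ref{PSemiorthogonalDecomposition}, so the two two-term semiorthogonal decompositions coincide, and Theorem~\ref{TDecompositionsGoodCuts2} forces $\Omega_1$ and $\Omega_2$ to agree up to homotopy---contradicting the pairwise distinctness required of a proper sequence. The principal obstacle in the plan is the non-crossing step, since properness conditions (i) and (ii) are phrased only for arcs that lie entirely within a single complement component; genuine crossings between $\Omega_1$- and $\Omega_2$-arcs must be excluded by a careful surface-topology argument, for example bigon reduction combined with local analysis of the four quadrants at a hypothetical crossing (ruling out each configuration of left/right added types), before the categorical machinery can be invoked.
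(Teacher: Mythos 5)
Your overall strategy is the same as the paper's: realize $\Omega_1$ and $\Omega_2$ by bipartite admissible dissections via Proposition~\ref{PDissectionGoodCut}, check the inclusion $\mcL_1 \subseteq \mcL_2$ on generators (finite and infinite $\gpoint$-arcs of the $\mcL_1$-side), use properness to see these arcs do not cross $\Omega_2$, place them in $\mcL_2 \cup \mcR_2$ by Lemma~\ref{LCrossingDividingArcs}, determine the side, and get strictness from the bijection of Theorem~\ref{TDecompositionsGoodCuts2}. The strictness step is fine and, with the observation $\mcR_i = {}^{\bot}\mcL_i$ from Proposition~\ref{PSemiorthogonalDecomposition}, is even slightly cleaner than the paper's wording.

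However, the two geometric steps that carry the actual content are not carried out. First, you leave the non-crossing statement as an acknowledged ``principal obstacle''; in the paper this is exactly where properness is used and is not deferred: condition (i) of Definition~\ref{DProperSequenceCuts} is read as saying that every $\omega \in \Omega_2$ lying in the right-added component $\mcP$ is homotopic to a component of $\partial\mcP$, so (in minimal position) neither $\gamma$ nor the auxiliary arc below crosses $\Omega_2$. Second, and more seriously, your identification of the side is not an argument: the claim that a left-added ambient component $\mcQ$ would yield a ``geometric picture incompatible with $\mcP$ being a bona fide right-added component'' is never substantiated, and it is unclear how to extract a contradiction from condition (ii) in the form you state it. The paper's mechanism here is concrete and different in detail: choose $\omega \in \Omega_1$ on $\partial\mcP$ and a $\gpoint$-arc $\gamma' \subset \mcP$ ending at the $\gpoint$-endpoint $g_\omega$ of $\omega$ and preceding $\omega$ in the counter-clockwise order; condition (ii) places $\omega$ inside, or on the boundary of, a right-added component of the complement of $\Omega_2$, and since by (the proof of) Proposition~\ref{PDissectionGoodCut} the arcs preceding a dividing arc counter-clockwise at the shared $\gpoint$-endpoint are precisely those assigned to part $1$, one gets $P_{(\gamma',g')} \in \mcL_2$, and then $P_{(\gamma,g)} \in \mcL_2$ because $\gamma$ and $\gamma'$ lie in the same component of the complement of $\Omega_2$. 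You would need to supply an argument of this kind (or make your incompatibility claim precise) before the proposal establishes $\mcL_1 \subseteq \mcL_2$.
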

\begin{proof}
Consider a $\gpoint$-arc $\gamma$ on $(S, M, P)$ whose corresponding objects in $\Db(\Lambda \modl)$ lie in $\mcL_1$ of the semiorthogonal decomposition $\langle \mcL_1, \mcR_1 \rangle$ thereof. The $\gpoint$-arc $\gamma$ is contained within a connected component $\mcP$ of the complement of $\Omega_1$ in $S \backslash P$. By definition, the component $\mcP$ would be with right added marked points. Pick $\omega \in \Omega_1$ that lies on the boundary of $\mcP$ and a $\gpoint$-arc $\gamma'$ such that: it begins in $g_\omega$, the $\gpoint$-endpoint of $\omega$, and it is contained in $\mcP$. Therefore, $\gamma'$ has to precede $\omega$ in the counter-clockwise order around $g_\omega$.

Because $(\Omega_1, \Omega_2)$ is proper, the connected component $\mcP$ does not contain any $\omega \in \Omega_2$ not homotopic to a segment of its boundary. For this reason, neither $\gamma$ nor $\gamma'$ cross an arc in $\Omega_2$, and the corresponding objects of $\Db(\Lambda \modl)$ need to both lie either in $\mcL_{\Omega_2}$ or in $\mcR_{\Omega_2}$ by Lemma \ref{LCrossingDividingArcs} combined with Proposition \ref{PDissectionGoodCut}.

Furthermore, as $(\Omega_1, \Omega_2)$ is proper, we may assume that without loss of generality $\omega$ lies within a connected component $\mcQ$ of the complement of $\Omega_2$ in $S \backslash P$ with right added marked points or on a boundary thereof. If $\omega$ lies on $Q$, then so does $\gamma'$, which does not cross any arc in $\Omega_2$ and $\gamma'$ precedes $\omega$ in the counter-clockwise order around $g_\omega$. Hence, the objects of $\Db(\Lambda \modl)$ corresponding to $\gamma'$ lie in $\mcL_{\Omega_2}$; it is also the case for the objects of $\Db(\Lambda \modl)$ corresponding to $\gamma$, so $\mcL_1 \subseteq \mcL_2$.

The fact that $\mcL_1 \subsetneq \mcL_2$ follows from the assumption that $\Omega_1$ and $\Omega_2$ are not equal up to homotopy. Theorem \ref{TDecompositionsGoodCuts2} then assures that the corresponding semiorthogonal decompositions differ as well.
\end{proof}

\begin{theorem}\label{TDecompositionsProperSequences}
There is a one-to-one correspondence between semiorthogonal decompositions of $\Db(\Lambda\modl)$ with arbitrarily many terms and proper sequences of good cuts of the marked surface associated to $\Lambda$.
\end{theorem}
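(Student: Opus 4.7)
The plan is to reduce this statement to the two-term Theorem \ref{TDecompositionsGoodCuts2} via the observation made after Proposition \ref{PSemiorthogonalDecomposition}: a semiorthogonal decomposition $\langle \mcC_1, \dots, \mcC_m \rangle$ of $\Db(\Lambda\modl)$ is the same data as a strict chain of left-admissible thick subcategories
$$\mcT_1 \subsetneq \mcT_2 \subsetneq \cdots \subsetneq \mcT_{m-1} \subsetneq \mcT_m = \Db(\Lambda\modl),$$
where $\mcT_i$ is generated by $\mcC_1, \dots, \mcC_i$. For each $i \leq m-1$ the pair $\langle \mcT_i, \mcT_i^\bot \rangle$ is a two-term semiorthogonal decomposition of $\Db(\Lambda\modl)$, and Theorem \ref{TDecompositionsGoodCuts2} associates to it a unique good cut $\Omega_i$. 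This produces the candidate sequence $(\Omega_1, \dots, \Omega_{m-1})$.

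For the forward direction, I would show this sequence is proper. Pairwise distinctness of the $\Omega_i$ follows from the strict chain inclusions combined with the bijectivity of Theorem \ref{TDecompositionsGoodCuts2}. For properness condition (i) in Definition \ref{DProperSequenceCuts}, I would argue by contradiction: if some $\omega \in \Omega_{i+1}$ lay in a right-added-points component $\mcP$ of $S \setminus \Omega_i$ without being homotopic to a boundary component of $\mcP$, then using Proposition \ref{PBipartiteDissectionDecomposition} and Proposition \ref{PDissectionGoodCut} one can produce a $\gpoint$-arc contained in $\mcP$ but crossing $\omega$, whose associated string object therefore lies in $\mcT_i \subseteq \mcL_{\Omega_{i+1}}$; however, by Lemma \ref{LCrossingDividingArcs} this object can lie in neither $\mcL_{\Omega_{i+1}}$ nor $\mcR_{\Omega_{i+1}}$, a contradiction. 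Condition (ii) is established dually using the dual inclusion $\mcR_{\Omega_{i+1}} \subseteq \mcR_{\Omega_i}$.

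Conversely, given a proper sequence $(\Omega_1, \dots, \Omega_n)$, iterated application of Proposition \ref{PProperSequenceDecomposition} yields the strict chain $\mcL_{\Omega_1} \subsetneq \cdots \subsetneq \mcL_{\Omega_n} \subsetneq \Db(\Lambda\modl)$. Since each $\mcL_{\Omega_i}$ is left-admissible in $\Db(\Lambda\modl)$, the inclusion $\mcL_{\Omega_i} \hookrightarrow \mcL_{\Omega_{i+1}}$ is also left-admissible (apply Proposition \ref{PSemiorthogonalDecomposition} inside $\mcL_{\Omega_{i+1}}$), so there exists a thick subcategory $\mcC_{i+1}$ with $\mcL_{\Omega_{i+1}} = \langle \mcL_{\Omega_i}, \mcC_{i+1} \rangle$. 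Setting $\mcC_1 = \mcL_{\Omega_1}$ and $\mcC_{n+1} = \mcR_{\Omega_n}$ assembles the desired $(n+1)$-term semiorthogonal decomposition $\langle \mcC_1, \dots, \mcC_{n+1} \rangle$. The two constructions are mutually inverse because both are entirely governed by the shared chain of left-admissible subcategories, each link of which is bijected with a good cut via Theorem \ref{TDecompositionsGoodCuts2}.

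The step I expect to be the main obstacle is the converse to Proposition \ref{PProperSequenceDecomposition} invoked in the forward direction: namely, that nesting of left-admissible subcategories forces the corresponding good cuts to satisfy the boundary-arc conditions of Definition \ref{DProperSequenceCuts}. The delicate point will be to exhibit, from a hypothetical non-boundary arc $\omega \in \Omega_{i+1}$ inside a $\mcT_i$-component of $S \setminus \Omega_i$, a concrete witness string object in $\mcT_i$ that geometrically crosses $\omega$; this will require a detailed combinatorial argument in the spirit of the proofs of Proposition \ref{PBipartiteDissectionDecomposition} and Proposition \ref{PBipartiteDissectionsEquivalence}, together with care about arcs of $\Omega_{i+1}$ that themselves cross arcs of $\Omega_i$.
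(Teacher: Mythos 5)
Your proposal follows essentially the same route as the paper: reduce a decomposition with many terms to a nested chain of two-term semiorthogonal decompositions via Proposition~\ref{PSemiorthogonalDecomposition}, and then combine the two-term correspondence of Theorem~\ref{TDecompositionsGoodCuts2} with Proposition~\ref{PProperSequenceDecomposition}. The step you flag as the main obstacle --- that nesting of the subcategories forces the associated good cuts to satisfy the properness conditions --- is exactly the direction the paper's own (very terse) proof leaves implicit, and your sketched contradiction via Lemma~\ref{LCrossingDividingArcs} together with Proposition~\ref{PDissectionGoodCut} is a reasonable way to supply it.
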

\begin{proof}
At first, we note that it is a straight-forward consequence of Proposition \ref{PSemiorthogonalDecomposition} that a semiorthogonal decomposition $\langle \mcC_1, \dots, \mcC_n \rangle$ of $\mcT$ uniquely corresponds to a sequence of two-term semirothogonal decompositions $\langle \mcL_i, \mcR_i \rangle$ of $\mcT$, for $1 \leq i \leq n-1$, such that $\mcL_i$ is the full triangulated subcategory of $\mcT$ generated by $\mcC_1, \dots, \mcC_i$ and $\mcR_i$ is the full triangulated subcategory of $\mcT$ generated by $\mcC_{i+1}, \dots, \mcC_n$, and vice versa.

Combining Theorem \ref{TDecompositionsGoodCuts2} with Proposition \ref{PProperSequenceDecomposition}, we obtain that there is a one-to-one correspondence with proper sequence of good cuts of the marked surface $(S, M, P)$ associated to $\Lambda$ and a sequence $\langle \mcL_i, \mcR_i \rangle$ of semiorthogonal decompositions of $\Db(\Lambda \modl)$ with $\mcL_i \subsetneq \mcL_{i+1}$ for each $i$ smaller than the length of the sequence. This correspondence is carried forward to a one-to-one correspondence between proper sequences of good cuts by our observation above.
\end{proof}


\bibliographystyle{plain}
\bibliography{bibliography}

\end{document}